\newtcolorbox{activitybox}[1][]{%
	breakable,
	enhanced,
	colback=lightergray,
	boxrule=3pt,
	arc=5pt,
	outer arc=5pt,
	boxsep=10pt,
	colframe=darkergray,
	coltitle=white,
	#1
}
\newcommand{\quot}[2]{%
	\raise1ex\hbox{$#1$}\Big/\lower1ex\hbox{$#2$}%
}
\newcommand{\colim}{\varinjlim}
\renewcommand{\lim}{\varprojlim}
\def\Rvarlim@#1#2{%
	\vtop{\m@th\ialign{##\cr
			\hfil$#1\operator@font Rlim$\hfil\cr
			\noalign{\nointerlineskip\kern1.5\ex@}#2\cr
			\noalign{\nointerlineskip\kern-\ex@}\cr}}%
}
\def\Rlim{%
	\mathop{\mathpalette\Rvarlim@{\leftarrowfill@\textstyle}}\nmlimits@
}
\newcommand{\expl}[2]{\underset{\mathclap{\minibox[c]{$\uparrow$\\ \fbox{\footnotesize #2}}}}{#1}}
\newcommand{\ra}{\rightarrow}
\newcommand{\dra}{\dashrightarrow}
\def\xto#1#2{\xrightarrow{#1}{#2}}
\def\xto#1{\xrightarrow{#1}}
\newcommand{\surj}{\twoheadrightarrow}
\newcommand{\inj}{\hookrightarrow}
\newcommand{\ttilde}{\widetilde}
\newcommand{\HHom}{\mathcal{H}om}
\newcommand{\EExt}{\mathcal{E}xt}
\newcommand{\et}{\acute{e}t}
\newcommand{\stacksproj}[1]{{\cite[Tag~\href{http://stacks.math.columbia.edu/tag/#1}{#1}]{Stacks_Project}}}
\newcommand{\inc}{\subseteq}
\newcommand{\esp}{\mbox{ }}
\DeclareMathAlphabet{\mathchanc}{OT1}{pzc}%
{m}{it}
\newcommand{\bigset}[2]{\big\{ \ #1 \  \big| \ #2 \ \big\}}
\newcommand{\bA}{\mathbb{A}}
\newcommand{\bD}{\mathbb{D}}
\newcommand{\bF}{\mathbb{F}}
\newcommand{\bZ}{\mathbb{Z}}
\newcommand{\scr}{\mathcal}
\newcommand{\cA}{\scr{A}}
\newcommand{\cB}{\scr{B}}
\newcommand{\cC}{\scr{C}}
\newcommand{\cE}{\scr{E}}
\newcommand{\cF}{\scr{F}}
\newcommand{\cG}{\scr{G}}
\newcommand{\cH}{\scr{H}}
\newcommand{\cI}{\scr{I}}
\newcommand{\cM}{\scr{M}}
\newcommand{\cN}{\scr{N}}
\newcommand{\cO}{\scr{O}}
\newcommand{\cP}{\scr{P}}
\newcommand{\cQ}{\scr{Q}}
\newcommand{\cR}{\scr{R}}
\newcommand{\cV}{\scr{V}}
\DeclareMathOperator{\Nil}{{Nil}}
\DeclareMathOperator{\ind}{{ind}}
\DeclareMathOperator{\ev}{{ev}}
\DeclareMathOperator{\Hom}{Hom}
\DeclareMathOperator{\im}{{im}}
\DeclareMathOperator{\Spec}{{Spec}}
\DeclareMathOperator{\Supp}{{Supp}}
\DeclareMathOperator{\Perv}{Perv}
\DeclareMathOperator{\Crys}{Crys}
\DeclareMathOperator{\coh}{coh}
\DeclareMathOperator{\unit}{unit}
\DeclareMathOperator{\Mod}{Mod}
\DeclareMathOperator{\Hol}{Hol}
\DeclareMathOperator{\Alg}{Alg}
\DeclareMathOperator{\indcoh}{indcoh}
\DeclareMathOperator{\IndCoh}{IndCoh}
\DeclareMathOperator{\IndCrys}{IndCrys}
\DeclareMathOperator{\indcrys}{indcrys}
\DeclareMathOperator{\qcoh}{qcoh}
\DeclareMathOperator{\perf}{perf}
\DeclareMathOperator{\Sh}{Sh}
\DeclareMathOperator{\QCoh}{QCoh}
\DeclareMathOperator{\QCrys}{QCrys}
\DeclareMathOperator{\crys}{crys}
\DeclareMathOperator{\LNil}{LNil}
\DeclareMathOperator{\LNilCoh}{LNilCoh}
\DeclareMathOperator{\RH}{RH}
\DeclareMathOperator{\Coh}{Coh}
\DeclareMathOperator{\Sol}{Sol}
\DeclareMathOperator{\ShHom}{\mathscr{H}\text{\kern -3pt {\calligra\large om}}\,}
\DeclareMathOperator{\sep}{sep}
\DeclareFontFamily{OT1}{pzc}{}
\DeclareFontShape{OT1}{pzc}{m}{it}{<-> s * [1.200] pzcmi7t}{}
\DeclareMathAlphabet{\mathpzc}{OT1}{pzc}{m}{it}
\renewcommand{\phi}{\varphi}
\newcommand{\factor}[2]{\left. \raise 2pt\hbox{\ensuremath{#1}} \right/
	\hskip -2pt\raise -2pt\hbox{\ensuremath{#2}}}
\renewcommand\subsection{
	\renewcommand{\sfdefault}{pag}
	\@startsection{subsection}%
	{2}{0pt}{.8\baselineskip}{.4\baselineskip}{\raggedright
		\sffamily\itshape\small\bfseries
}}
\renewcommand\section{
	\renewcommand{\sfdefault}{phv}
	\@startsection{section} %
	{1}{0pt}{\baselineskip}{.8\baselineskip}{\centering
		\sffamily
		\scshape
		\bfseries
}}
\definecolor{gr}{rgb}{0,0.5,0}
\newcommand{\Addresses}{{
		\bigskip
		\footnotesize
		
		\textsc{\'Ecole Polytechnique F\'ed\'erale de Lausanne, SB MATH CAG, MA C3 615 (B\^atiment MA), Station 8, CH-1015 Lausanne, Switzerland}\par\nopagebreak
		\textit{E-mail address}: \texttt{jefferson.baudin@epfl.ch}

}}
\newcommand{\Acal}{\mathcal{A}}
\newcommand{\Bcal}{\mathcal{B}}
\newcommand{\Mcal}{\mathcal{M}}
\newcommand{\Ncal}{\mathcal{N}}
\newcommand{\Fcal}{\mathcal{F}}
\newcommand{\Rcal}{\mathcal{R}}
\newcommand{\Ocal}{\mathcal{O}}
\newcommand{\Ff}{\mathbb{F}}
\newcommand{\Zz}{\mathbb{Z}}
\newcommand{\Dd}{\mathbb{D}}
\author[J.~Baudin]{Jefferson Baudin} 
\date{}
\setlist{  
	listparindent=\parindent,
	parsep=0pt,
}
\subjclass[2020]{14G17, 13A35, 13F35}
\keywords{Perverse sheaves, Cartier crystals, Riemann--Hilbert correspondence, Witt vectors.}
\title[Duality between $W_n$--Cartier crystals and perverse $\bZ/p^n\bZ$--sheaves]{Duality between $W_n$--Cartier crystals and $\bZ/p^n\bZ$--perverse sheaves}
\begin{document}
	\maketitle
	
	\begin{abstract}
		We generalize the results in \cite{Baudin_Duality_between_perverse_sheaves_and_Cartier_crystals} to obtain a duality between $W_n$--Cartier crystals and perverse $\bZ/p^n\bZ$--sheaves.
	\end{abstract}
	
	\tableofcontents
	
	\section{Introduction}
	
	\subsection{Main result}
	
	The theory of modules with Frobenius actions has generated a lot of interest and found numerous applications throughout positive characteristic commutative algebra and algebraic geometry (see, among many others, \cite{Emerton_Kisin_Riemann-Hilbert_correspondence, Bockle_Pink_Cohomological_Theory_of_crystals_over_function_fields, Bhatt_Lurie_RH_corr_pos_char, Huneke_Sharp_Bass_numbers_of_local_coh_modules, Lyubeznik_F_modules_applications_to_local_coh_and_D_mods_in_char_p, Schwede_Test_ideals_in_non_Q_Gor_rings, Blickle_Test_ideals_via_algebras_of_pe_linear_maps, Blickle_Fink_Cartier_crystals_have_finite_global_dimension, Carvajal_Rojas_Stabler_On_the_behavior_of_F_signature_uner_finite_covers, Bhatt&Co_Applications_of_perverse_sheaves_in_commutative_algebra, Hacon_Xu_On_the_3dim_MMP_in_pos_char, Patakfalvi_On_Subadditivity_of_Kodaira_dimension_in_positive_characteristic, Ejiri_Direct_images_of_pluricanonical_bundles_and_Frobenius_stable_canonical_rings_of_fibers, Hacon_Pat_GV_Geom_Theta_Divs, Baudin_Bernasconi_Kawakami_Frobenius_GR_fails}). Let us focus on the Cartier modules/crystals side. One important feature of these objects that they satisfy a duality in the form of a Riemann--Hilbert correspondence: they are equivalent to étale $\bF_p$--sheaves \cite{Schedlmeier_Cartier_crystals_and_perverse_sheaves, Baudin_Duality_between_perverse_sheaves_and_Cartier_crystals}. This means that coherent techniques can be used to the theory of perverse sheaves (e.g. the Artin vanishing theorem, see \cite[Theorem 1.1]{Bhatt&Co_Applications_of_perverse_sheaves_in_commutative_algebra} or \cite[Corollary 5.3.4]{Baudin_Duality_between_perverse_sheaves_and_Cartier_crystals}) or that topological techniques can be used to understand better the coherent picture (e.g. absolute Cohen--Macaulayness of the integral closure and Kodaira vanishing up to finite covers, see \cite[Section 5]{Bhatt&Co_Applications_of_perverse_sheaves_in_commutative_algebra}). \\
	
	Although the Cartier crystal $\omega_X$ on a smooth projective variety $X$ satisfies nice vanishing theorems up to nilpotence (\cite{Hacon_Pat_GV_Geom_Theta_Divs, Baudin_Bernasconi_Kawakami_Frobenius_GR_fails}), we recently realized that it also fails other important ones than one would hope from the characteristic zero picture, such as Grauert--Riemenschneider vanishing (see \cite[Theorem 1.1]{Baudin_Bernasconi_Kawakami_Frobenius_GR_fails}). However, it follows from \cite{Baudin_Witt_GR_vanishing_and_applications} that the collection $\{W_n\omega_X\}_{n \geq 1}$ satisfies much stronger theorems. This naturally led us to generalize our duality theorem to the setup of truncated Witt vectors to be able to deal with these objects. \\
	
	Let us fix some $n \geq 1$. The objects on the coherent side are called \emph{$W_n$--Cartier crystals}. Let us explain what this means: a (coherent) \emph{$W_n$--Cartier module} is a pair ($\cM$, $\kappa_{\cM}$), where $\cM$ is a coherent module on the scheme $W_nX \coloneqq (X, W_n\cO_X)$, and $\kappa_{\cM} \colon F_*\cM \to \cM$ is a $W_n\cO_X$--linear morphism. Note that $\kappa_{\cM}$ is in particular an endomorphism of $\cM$ as an abelian group, so it makes sense to say that $\kappa_{\cM}$ is nilpotent. The category of $W_n$--Cartier crystals is then the ``category of coherent $W_n$--Cartier modules up to nilpotence'' (see \autoref{def_crystals} for a formal definition). \\
	
	On the topological side, we show that the middle perverse t--structure of \cite{Gabber_notes_on_some_t_structures} on $D^b(X_{\et}, \bZ/p^n\bZ)$ induces a t--structure on $D^b_c(X_{\et}, \bZ/p^n\bZ)$ (i.e. we require that cohomology sheaves are constructible). The heart of this t--structure is denoted $\Perv_c(X_{\et}, \bZ/p^n\bZ)$. \\
	
	Our main result is the following:

	\begin{theorem*}[{\autoref{main_thm_final}}]
		Let $X$ be a Noetherian, $F$--finite and semi--separated $\bF_p$--scheme. Then any $W_n$--unit dualizing complex induces an equivalence of categories \[ D^b(\Crys_{W_nX}^{C})^{op} \cong D^b_c(X_{\et}, \bZ/p^n\bZ) \] commuting with derived proper pushforwards, and which sends the canonical t--structure on $D^b(\Crys_{W_nX}^C)$ to the perverse t--structure on $D^b_c(X_{\et}, \bZ/p^n\bZ)$. Hence, we obtain an equivalence  
		\[ (\Crys_{W_nX}^{C})^{op} \cong \Perv_c(X_{\et}, \bZ/p^n\bZ). \]
	\end{theorem*}

	It follows from \cite[Appendix]{Quasi_F_splittings_III} that if $X$ is of finite type over an affine Noetherian and $F$--finite $\bF_p$--algebra, then it admits a $W_n$--unit dualizing complex (see \autoref{existence_of_W_n_unit_dc}), so our assumptions are quite general. \\

	As in the case $n = 1$, the proof goes through an intermediate step: the category of $W_n$--Frobenius crystals. The link between $W_n$--Frobenius crystals and étale $\bZ/p^n\bZ$--sheaves was worked out in \cite[Section 9]{Bhatt_Lurie_RH_corr_pos_char}. Hence, our work mostly consisted of constructing a duality between $W_n$--Frobenius crystals and $W_n$--Cartier crystals, in the spirit of Grothendieck duality. The approach is identical to the case $n = 1$, and most proofs are either the same as in this case, or reduce to it. For this reason, \textbf{we strongly advise the reader to be comfortable with the techniques of \cite{Baudin_Duality_between_perverse_sheaves_and_Cartier_crystals}}. \\
	
	While developing the theory, we also generalized the fundamental finiteness theorems of Cartier modules/crystals (\cite{Blickle_Bockle_Cartier_modules_finiteness_results}) to this setup. \\
	
	Finally, we want to mention that the duality theorems developed in this paper and in \cite{Baudin_Duality_between_perverse_sheaves_and_Cartier_crystals} could probably be reproved using the language of $\infty$--categories, which could potentially simplify some technicalities (see \cite{Mattis_Weiss_The_derived_infty_category_of_Cartier_modules}).

	\subsection{Acknowledgements}\label{sec:aknowledgements}
	
	I would like to thank Fabio Bernasconi, Kay Rülling, Zsolt Patakfalvi and Nikolaos Tsakanikas for their insights. Financial support was provided by grant $\#$200020B/192035 from the Swiss National Science Foundation (FNS/SNF), and by grant $\#$804334 from the European Research Council (ERC).
	
	\subsection{Notations}
	Let $\Acal$ be an abelian category, $R$ a ring and $X$ a scheme.
	\begin{itemize}
		\item Throughout the paper, we fix a prime number $p > 0$ and integers $n, r \geq 1$. We set $q \coloneqq p^r$.
		\item The symbol $F$ always denotes the absolute Frobenius.
		
		\item The category of $\Ocal_X$-modules (resp. $R$-modules) is denoted by $\Mod(\Ocal_X)$ (resp. $\Mod(R)$).
		
		\item The category of quasi-coherent (resp. coherent) sheaves on $X$ is denoted by $\QCoh_X$ (resp. $\Coh_X$).
		
		\item Given any ring $\Lambda$, the category of $\Lambda$-modules on the \'etale site is denoted $\Sh(X_{\et}, \Lambda)$, and its derived category is $D(X_{\et}, \Lambda)$. 
		
		\item Given a point $x \in X$, we write $\overline{x} \coloneqq \Spec k(x)^{\sep}$, where $k(x)^{\sep}$ denotes a separable closure of $k(x)$. 
		
		\item The symbol $D(\Acal)$ denotes the derived category of $\cA$. Given $G \colon \cA \to \cB$ a left exact functor, its right derived functor is denoted $RG$ (when it exists). If $G$ is already exact, we will simply write $G = RG$.
		
		\item For a cochain complex $A^\bullet = (A^n)_{n \in \Zz}$ in $\Acal$, the $i$'th cohomology object of this cochain is denoted by $\cH^i(A^\bullet)$. 
		
		\item Given a weak Serre subcategory $\Bcal \inc \Acal$ (see \cite[{\href{https://stacks.math.columbia.edu/tag/02MO}{Tag 02MO}}]{Stacks_Project}), the triangulated category $D_\Bcal(\Acal)$ denotes the full subcategory of $D(\Acal)$ of complexes $A^\bullet$ such that $\cH^i(A^\bullet) \in \Bcal$ for all $i \in \Zz$ (see \cite[{\href{https://stacks.math.columbia.edu/tag/06UQ}{Tag 06UQ}}]{Stacks_Project}). 
		
		\item Given an abelian category $\cA$ and a Serre subcategory $\cB$ (see \cite[{\href{https://stacks.math.columbia.edu/tag/02MO}{Tag 02MO}}]{Stacks_Project}), the quotient will be denoted $\cA/\cB$ (see \cite[{\href{https://stacks.math.columbia.edu/tag/02MS}{Tag 02MS}}]{Stacks_Project}).
		
		\item We loosely denote $D_{coh}(\QCoh_X) \coloneqq D_{\Coh_X}(\QCoh_X)$. Throughout the paper, we will make comparable abuse of notations.
		
		\item The full subcategories $D^+(\Acal)$, $D^-(\Acal)$ $D^b(\Acal)$ denote complexes $A^\bullet$ such that $\cH^i(A^\bullet) = 0$ for $i \ll 0$ (resp. $i \gg 0$ and $\abs{i} \gg 0$). We can combine this notation with the one above.
		
		\item Given a finite morphism $f \colon X \to Y$ of Noetherian schemes, the right adjoint of $f_*$ (see \stacksproj{08YP}) will be denoted $f^{\flat}$, and we shall write $f^! \coloneqq Rf^{\flat}$. For example, if $f$ is a closed immersion (with associated ideal $\cI \inc \cO_Y$), then $f^{\flat}(\cM)$ is nothing else than the $\cI$--torsion elements in $\cM$. 
		
		\item More generally, given a morphism of finite type $f \colon X \to Y$ of Noetherian schemes where $Y$ admits a dualizing complex, the upper--schriek functor $f^!$ is the functor defined in \stacksproj{0AU5} (in \emph{loc. cit.}, it is named $f_{new}^!$).
		
		\item Given an abelian group $A$, we denote by $A[p]$ its subgroup of $p$--torsion elements. We use the same notation for sheaves.
		
	\end{itemize}
	
	\section{Witt vector preliminaries}\label{section preliminaries}
	
	Here we gather general results about Witt vectors that will be used throughout the paper.
	
	\begin{snotation}
		\begin{itemize}
			\item Given an $\bF_p$--algebra $R$, the ring of $n$--truncated Witt vectors (see \cite[Section 9.3]{Gabber_Ramero_Almost_ring_theory}) is denoted $W_n(R)$. The \emph{Frobenius} $F \colon W_n(R) \to W_n(R)$ is the ring morphism $(a_1, \dots, a_n) \mapsto (a_1^p, \dots, a_n^p)$, the \emph{Verschiebung} $V \colon F_*W_n(R) \to W_{n + 1}(R)$ is the $W_{n + 1}(R)$--linear map $(a_1, \dots, a_n) \mapsto (0, a_1, \dots, a_n)$, and the \emph{restriction} map $R \colon W_{n + 1}(R) \to W_n(R)$ is the ring morphism $(a_1, \dots, a_{n +1}) \mapsto (a_1, \dots, a_n)$. Given $s \in R$, its Teichmüller lift is the element $[s] \coloneqq (s, 0, \dots, 0) \in W_n(R)$.
			\item Given an $\bF_p$--scheme $X$, associating to each open $U \inc X$ the ring $W_n\cO_X(U)$ gives a sheaf of rings $W_n\cO_X$ on $X$. The associated ringed space $(X, W_n\cO_X)$ is a $\bZ/p^n\bZ$--scheme, and is denoted $W_nX$.
			\item Given an $\bF_p$--scheme $X$ and a quasi--coherent ideal $\cI_0 \inc \cO_X$, we denote by $W_n\cI_0 \inc W_n\cO_X$ the quasi--coherent ideal of elements $(s_1, \dots s_n)$ such that each $s_i \in \cI_0$.
			\item Given a morphism $f \colon X \to Y$ of $\bF_p$--schemes, the induced morphism $W_nX \to W_nY$ will still be denoted $f$.
		\end{itemize}
	\end{snotation}

	The following two results will be implicitly used throughout the article without further mention.

	\begin{slem}\label{general_statements_Witt_vectors}
		Let $f \colon X \to Y$ be a morphism of Noetherian, $F$--finite $\bF_p$--schemes satisfying a property $(P)$, where $(P)$ is one of the following:
		\begin{itemize}
			\item of finite type;
			\item proper;
			\item separated;
			\item semi--separated;
			\item finite;
			\item affine;
			\item closed immersion;
			\item open immersion;
			\item étale.
		\end{itemize}
	Then the induced map $W_nX \to W_nY$ also satisfies $(P)$.
	\end{slem}
	\begin{proof}
		The proof for affine, semi--separated, open and closed immersions are immediate. All the other cases except for finiteness are included in \cite[Appendix A.1]{Langer_Zink_De_Rham_Witt_cohomology_for_a_proper_and_smooth_morphism}. The finite case follows from the proper and affine cases.
	\end{proof}

	\begin{slem}
		Let $X$ be a Noetherian $F$--finite $\bF_p$--scheme. Then $W_nX$ is also Noetherian, and the Frobenius map $F \colon W_nX \to W_nX$ is also finite.
	\end{slem}
	\begin{proof}
		Noetherianness follows from \cite[Proposition A.4]{Langer_Zink_De_Rham_Witt_cohomology_for_a_proper_and_smooth_morphism}, and finiteness of the Frobenius is a consequence of \autoref{general_statements_Witt_vectors}.
	\end{proof}

	\begin{slem}\label{Witt_ideal_inclusions}
		Let $X$ be a Noetherian $F$--finite scheme, and let $\cI_0 \inc \cO_X$ be a quasi--coherent ideal. Then for any $m \geq 1$, there exists $e \geq 1$ such that \[ F^e(W_n\cI_0) \inc (W_n\cI_0)^m. \] In particular, we have that $W_n(\cI_0^{m'}) \subseteq (W_n\cI_0)^m$ for $m' \gg 0$.
	\end{slem}
	\begin{proof}
		We will show the result by induction on $n \geq 1$. If $n = 1$, the result is immediate by Noetherianness.
		
		For $n \geq 2$, let $\cM$ denote the kernel of $R^{n - 1} \colon (W_n\cI_0)^m \to \cI_0^m$. Then we have inclusions $F_*(W_{n - 1}\cI_0)^m \inc \cM \inc F_*W_{n - 1}\cI_0$ via the Verschiebung. In particular, the cokernel of the injection $\cM \inj F_*W_{n - 1}\cI_0$ is killed by some power of the Frobenius by induction. Now, consider the following commutative diagram of exact sequences
		\[ \begin{tikzcd}
			0 \ar[rr] & & \cM \ar[rr, "V"] \ar[d, "\inc"] & & (W_n\cI_0)^m \ar[rr, "R^{n - 1}"] \ar[d, "\inc"] && \cI_0^m \ar[d, "\inc"] \ar[rr] && 0 \\
			0 \ar[rr] & & F_*W_{n - 1}\cI_0 \ar[rr, "V"] & & W_n\cI_0 \ar[rr, "R^{n - 1}"] && \cI_0 \ar[rr]&& 0.
		\end{tikzcd} \]
		
		Since both the left and right vertical arrows have their cokernel killed by some power of the Frobenius, this holds for the middle arrow too.
	\end{proof}

	\begin{slem}\label{base_change_etale_Witt}
		Let $f \colon U \to X$ be an étale morphism of Noetherian $F$--finite $\bF_p$--algebras. Then the diagram
		\[ \begin{tikzcd}
			W_mU \arrow[r] \arrow[d] & W_nU \arrow[d] \\
			W_mX \arrow[r, hook]     & W_nX          
		\end{tikzcd} \] is Cartesian.
	\end{slem}
	\begin{proof}
		We may assume that both $U$ and $X$ are affine. It then follows from \cite[Proposition A.14 and Corollary A.18]{Langer_Zink_De_Rham_Witt_cohomology_for_a_proper_and_smooth_morphism}.
	\end{proof}
	
	\section{The theory of Witt--Frobenius modules}
	
	Here we gather fundamental results on Witt--Frobenius modules, such as their Riemann--Hilbert correspondence (see \cite{Bhatt_Lurie_RH_corr_pos_char}).
	
	\subsection{Basic definitions}
	\begin{defn}\label{def F-module}
		Let $X$ be an $\Ff_p$-scheme, and let $\Mcal$ be an $W_n\Ocal_X$-module over $X$.
		\begin{itemize}
			\item An \emph{$r$--Frobenius module over $W_nX$} (or $W_n$--Frobenius module when the context is clear) structure on $\Mcal$ is a morphism of $W_n\Ocal_X$-modules $\tau_{\cM} \colon \Mcal \ra F^r_*\Mcal$. A morphism in this category is a morphism $\theta \colon \Mcal \ra \Ncal$ between two $W_n\Ocal_X$-modules such that the square \[ \begin{tikzcd}
				\Mcal \arrow[r, "\theta"] \arrow[d, "\tau_{\cM}"'] & \Ncal \arrow[d, "\tau_{\cN}"] \\
				F^r_*\Mcal \arrow[r, "F^r_*\theta"'] & F^r_*\Ncal			
			\end{tikzcd} \] commutes. The category of $W_n$--Frobenius modules is denoted $\Mod(W_n\Ocal_X[F^r])$.
			\item The full subcategory of Frobenius modules which are quasi-coherent (resp. coherent) as $W_n\cO_X$-modules is denoted by $\QCoh_{W_nX}^{F^r}$ (resp. $\Coh_{W_nX}^{F^r}$).
			\item By adjunction, the map $\tau_{\cM} \colon \cM \to F^r_*\cM$ is equivalent to a map $\tau_{\cM}^* \colon F^{r, *}\cM \to \cM$. This morphism is called the \emph{adjoint structural morphism}. A $W_n$--Frobenius module is said to be \emph{unit} if its adjoint structural morphism is an isomorphism.
		\end{itemize}
	\end{defn}
	\begin{rem}\label{operations_on_F_modules}
		\begin{itemize}
			\item As for $n = 1$, one can take pushforwards, pullbacks and tensor products of $W_n$--Frobenius modules.
			\item The reason why the category of $W_n$--Frobenius modules is called $\Mod(W_n\cO_X[F^r])$ is the same as in the case where $n = 1$ (see \cite[Construction after 2.2.3]{Baudin_Duality_between_perverse_sheaves_and_Cartier_crystals}).
		\end{itemize}

	\end{rem}

	\begin{defn}
		Let $X$ be a Noetherian $F$--finite $\bF_p$--scheme, and let $\cM \in \QCoh_{W_nX}^{F^r}$.
		\begin{itemize}
			\item We say that $\cM$ is \emph{ind-coherent} if it is the union of its coherent $W_n$--Frobenius submodules. The full category of these objects is denoted by $\IndCoh_{W_nX}^{F^r}$.
			\item We say that $\cM$ is \emph{nilpotent} if $\tau_{\cM}^n = 0$ for some $n \geq 0$, where we make the abuse of notations \[ \tau_{\cM}^n \coloneqq  F^{r(n - 1)}_*\tau_{\cM} \circ \dots \circ F^r_*\tau_{\cM} \circ \tau_{\cM}. \]
			\item It is said to be \emph{locally nilpotent} it is the union of its nilpotent $W_n$--Frobenius submodules. The full subcategory of locally nilpotent modules is called $\LNil$ (note that $\LNil \inc \IndCoh_{W_nX}^{F^r}$).
			\item We set $\Nil \coloneqq \LNil \cap \Coh_{W_nX}^{F^r}$.
			\item We define $\QCrys_X^{F^r} \coloneqq \QCoh_X^{F^r}/\LNil$, $\IndCrys_X^{F^r} \coloneqq \IndCoh_X^{F^r}/\LNil$, and the category of \emph{$W_n$--Frobenius crystals} to be $\Crys_X^{F^r} \coloneqq \Coh_X^{F^r}/\Nil$.
		\end{itemize}
	\end{defn}
	
	\begin{lem}\label{all cats of F-mods are Grothendieck}
		Let $X$ be a Noetherian $F$--finite $\bF_p$-scheme, and let $\cC$ be one of the following categories:  \[\Mod(W_n\cO_X), \QCoh_{W_nX}, \Mod(W_n\cO_X[F^r]), \QCoh_{W_nX}^{F^r}, \IndCoh_{W_nX}^{F^r}, \QCrys_{W_nX}^{F^r}, \IndCrys_{W_nX}^{F^r}.\]
		Then $\cC$ is a Grothendieck category. In particular, it has enough injectives.
	\end{lem}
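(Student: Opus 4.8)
The plan is to organize the seven categories into three layers and bootstrap outward. The two ``plain'' categories are standard: $\Mod(W_n\cO_X)$ is the category of modules over the sheaf of rings $W_n\cO_X$ on the space $X$, which is a Grothendieck category by the general theory of modules over a sheaf of rings, and $\QCoh_{W_nX}$ is a Grothendieck category because $W_nX$ is a scheme. I would record these first, citing the Stacks project together with the $n = 1$ discussion in \cite{Baudin_Duality_between_perverse_sheaves_and_Cartier_crystals}.

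For the two Frobenius-module categories I would isolate a general lemma: given a Grothendieck category $\cA$ and an additive endofunctor $T \colon \cA \to \cA$ that is exact and preserves all colimits, the category $\cA[T]$ of pairs $(\cM, \tau \colon \cM \to T\cM)$ (with the evident commuting-square morphisms) is again a Grothendieck category. Indeed, kernels and cokernels are computed in $\cA$ and carry an induced structural map because $T$ is exact, so $\cA[T]$ is abelian; colimits are computed in $\cA$ with structural map $\colim \tau_i$ (using that $T$ commutes with colimits), and exactness of filtered colimits is inherited from $\cA$, giving AB5; finally the forgetful functor $\cA[T] \to \cA$ admits a left adjoint $\cN \mapsto \bigoplus_{i \geq 0} T^i \cN$ (the ``free'' Frobenius module), and applying this to a generator of $\cA$ produces a generator of $\cA[T]$. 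I would then apply this with $T = F^r_*$ to $\cA = \Mod(W_n\cO_X)$ and to $\cA = \QCoh_{W_nX}$: here $F^r_*$ is exact and preserves all colimits, since it is the composite of the equivalence induced by the homeomorphism $F^r$ on underlying spaces with restriction of scalars along the Frobenius, and it preserves quasi-coherence because $F$ is finite, hence affine. This yields $\Mod(W_n\cO_X[F^r]) = \Mod(W_n\cO_X)[F^r_*]$ and $\QCoh_{W_nX}^{F^r} = \QCoh_{W_nX}[F^r_*]$.

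Next I would treat $\IndCoh_{W_nX}^{F^r}$ as a full subcategory of $\QCoh_{W_nX}^{F^r}$. Because $W_nX$ is Noetherian, every sub-$W_n$-Frobenius-module of a coherent one is coherent, so the ind-coherent objects are closed under subobjects, quotients and filtered colimits inside $\QCoh_{W_nX}^{F^r}$; consequently all colimits of ind-coherent objects are computed in $\QCoh_{W_nX}^{F^r}$ and AB5 is inherited. A generator is obtained as the coproduct of a set of representatives of the (essentially small) collection of isomorphism classes of coherent $W_n$-Frobenius modules, which generate since every ind-coherent object is the filtered union of its coherent sub-Frobenius-modules. Hence $\IndCoh_{W_nX}^{F^r}$ is Grothendieck.

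Finally, $\QCrys_{W_nX}^{F^r} = \QCoh_{W_nX}^{F^r}/\LNil$ and $\IndCrys_{W_nX}^{F^r} = \IndCoh_{W_nX}^{F^r}/\LNil$ are Gabriel quotients, so they are Grothendieck once $\LNil$ is shown to be a \emph{localizing} Serre subcategory of each ambient category (a Serre subcategory closed under arbitrary coproducts), using that the quotient of a Grothendieck category by a localizing subcategory is again Grothendieck. This last verification is where I expect the real work to lie: closure of $\LNil$ under subobjects, quotients and filtered colimits is essentially formal, but closure under extensions must be checked by hand, and one must confirm that $\LNil$ is stable under the coproducts computed in $\QCoh_{W_nX}^{F^r}$ and $\IndCoh_{W_nX}^{F^r}$. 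I would reduce these points to the behaviour of locally nilpotent modules already established for $n = 1$ in \cite{Baudin_Duality_between_perverse_sheaves_and_Cartier_crystals}, since nilpotence of $\tau_{\cM}$ is a purely additive condition insensitive to the Witt length. The assertion that all seven categories have enough injectives then follows from the general fact that every Grothendieck category does.
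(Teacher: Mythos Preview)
Your overall strategy is correct and aligns with the paper, which simply declares the proof identical to the $n=1$ case in \cite[Lemma~2.2.10]{Baudin_Duality_between_perverse_sheaves_and_Cartier_crystals}. There is one slip, however: in your abstract lemma, the left adjoint to the forgetful functor $\cA[T] \to \cA$ (for pairs $(\cM,\tau\colon \cM \to T\cM)$) is \emph{not} $\cN \mapsto \bigoplus_{i\geq 0} T^i\cN$. Indeed, for any structural map $\sigma$ on $\bigoplus_i T^i\cN$, the compatibility condition on a morphism $(\phi_i)_i$ to $(\cM,\tau_{\cM})$ never reduces to free determination by $\phi_0 \colon \cN \to \cM$; one rather obtains a \emph{right} adjoint $\cN \mapsto \prod_i T^i\cN$ this way, which does not help produce a generator.

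The fix is to pass to the adjoint description: a $W_n$--Frobenius module is equivalently a pair $(\cM,\tau^*\colon F^{r,*}\cM \to \cM)$, and for the category of pairs $(\cM, S\cM \to \cM)$ with $S$ colimit--preserving, the forgetful functor \emph{does} have left adjoint $\cN \mapsto \bigoplus_{i\geq 0} S^i\cN$ (structural map the shift $\bigoplus_i S^{i+1}\cN \hookrightarrow \bigoplus_i S^i\cN$). Concretely, the free Frobenius module on $\cN$ is $W_n\cO_X[F^r]\otimes_{W_n\cO_X}\cN \cong \bigoplus_i F^{ir,*}\cN$, not $\bigoplus_i F^{ir}_*\cN$. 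With this correction your argument for $\Mod(W_n\cO_X[F^r])$ and $\QCoh_{W_nX}^{F^r}$ goes through (alternatively, just observe that the former is the category of modules over a sheaf of rings), and the remaining steps for $\IndCoh_{W_nX}^{F^r}$ and the crystal quotients are fine as written.
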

	\begin{proof}
		The proof is identical to that of \cite[Lemma 2.2.10]{Baudin_Duality_between_perverse_sheaves_and_Cartier_crystals}.
	\end{proof}
	
	\begin{lem}\label{all derived pushforwards are the same for F-modules}
		Let $f \colon X \to Y$ be a proper morphism between semi--separated, Noetherian and $F$--finite $\bF_p$-schemes. Then the diagram 
		\[ \begin{tikzcd}
			D^+(\IndCoh_{W_nX}^{F^r}) \arrow[d] \arrow[rr, "Rf_*"] &  & D^+(\IndCoh_{W_nY}^{F^r}) \arrow[d] \\
			D^+(\cC_X) \arrow[rr, "Rf_*"]                     &  & D^+(\cC_Y)                 
		\end{tikzcd} \] commutes for any \[ \cC \in \left\{\Mod(W_n\cO), \Mod(W_n\cO[F^r]), \QCoh_{W_n}^{F^r}, \QCoh_{W_n}, \IndCrys_{W_n}^{F^r}\right\}.\]
	\end{lem}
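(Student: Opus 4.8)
The plan is to exploit that every vertical functor in the diagram is exact: the forgetful functors $\IndCoh_{W_nX}^{F^r} \to \cC_X$ for $\cC \in \{\Mod(W_n\cO), \Mod(W_n\cO[F^r]), \QCoh_{W_n}^{F^r}, \QCoh_{W_n}\}$ are exact by definition, and the quotient functor $Q \colon \IndCoh_{W_nX}^{F^r} \to \IndCrys_{W_nX}^{F^r}$ is exact since $\LNil$ is a localizing subcategory. For such a comparison it suffices, by the standard criterion for derived functors, to check that an injective object $\cI \in \IndCoh_{W_nX}^{F^r}$ is sent to an $f_*$--acyclic object in each target $\cC_X$: an injective resolution $\cM \to \cI^\bullet$ in $\IndCoh_{W_nX}^{F^r}$ remains a resolution after applying the (exact) vertical functor, and if its terms are $f_*$--acyclic in $\cC_X$, then $f_* \cI^\bullet$ simultaneously computes both derived pushforwards. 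So the whole statement reduces to acyclicity assertions for injective $W_n$--Frobenius modules.

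For the four module--type targets, I would first observe that $f_*$--acyclicity in any of $\Mod(W_n\cO)$, $\Mod(W_n\cO[F^r])$, $\QCoh_{W_n}^{F^r}$, $\QCoh_{W_n}$ is equivalent to $f_*$--acyclicity of the underlying sheaf of abelian groups. The key structural point is that the absolute Frobenius $F$ is the identity on the underlying topological space, so $W_nX$ and $X$ share the same space and the same (finite, by Noetherianness) affine open covers. Using semi--separatedness, the intersections $U_I$ of such a cover are affine and the inclusions $j_I \colon U_I \hookrightarrow X$, as well as the composites $f \circ j_I \colon U_I \to Y$, are affine morphisms; hence their pushforwards are exact and the associated Čech complex $\check{\cC}^\bullet(\cM)$ is a resolution, functorial and compatible with the Frobenius structure (as $F(U_I) = U_I$), whose terms are $f_*$--acyclic. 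This simultaneously identifies the underlying complex of $Rf_*^{\QCoh_{W_n}^{F^r}}\cM$ with the sheaf--level $Rf_*$, and handles the comparisons of $\QCoh_{W_n}^{F^r}$ with $\Mod(W_n\cO)$, $\Mod(W_n\cO[F^r])$ and $\QCoh_{W_n}$. It then remains to compare $\IndCoh_{W_nX}^{F^r}$ with $\QCoh_{W_nX}^{F^r}$, i.e. to show that injective objects of $\IndCoh_{W_nX}^{F^r}$ have $f_*$--acyclic underlying sheaves; here I would use the description of injectives via the coinduction functor $N \mapsto \prod_{k \geq 0} F^{rk}_* N$ right adjoint to the forgetful functor, reducing acyclicity to that of $F^{rk}_* N$ for $N$ an injective quasi--coherent sheaf, exactly as in the case $n = 1$ treated in \cite{Baudin_Duality_between_perverse_sheaves_and_Cartier_crystals}.

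For the crystal target $\IndCrys_{W_nX}^{F^r} = \IndCoh_{W_nX}^{F^r}/\LNil$, the additional input is that $Rf_*$ preserves local nilpotence, so that it descends to the quotient: for a nilpotent coherent $\cM$ one has $R^if_*(\tau_{\cM}^k) = (R^if_* \tau_{\cM})^k$ under the identification $R^if_* F^r_* \cong F^r_* R^if_*$, so $R^if_*\cM$ is again nilpotent, and the locally nilpotent case follows since $R^if_*$ commutes with filtered colimits on a Noetherian scheme. Granting this, the comparison $Q \circ Rf_* \cong Rf_* \circ Q$ follows from the general behaviour of derived functors under a Serre quotient by a localizing subcategory stable under the functor, using that every injective of $\IndCrys_{W_nX}^{F^r}$ is the image under $Q$ of an injective of $\IndCoh_{W_nX}^{F^r}$ (via the section functor right adjoint to $Q$); again this mirrors the argument for $n = 1$.

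The main obstacle is the acyclicity of the injective objects of $\IndCoh_{W_nX}^{F^r}$ in the quasi--coherent world. The coinduced injectives involve infinite products, which in $\QCoh_{W_nX}$ need not agree with naive products of sheaves, so controlling their higher direct images requires care; this, together with verifying that $Rf_*$ is compatible with the Serre quotient by $\LNil$, is where the real work lies, and I would carry both out by reduction to the corresponding statements of \cite{Baudin_Duality_between_perverse_sheaves_and_Cartier_crystals}, since the presence of $W_n$ changes nothing essential once $F$ is recognized as a homeomorphism and the structural maps are checked to be affine.
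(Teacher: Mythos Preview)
Your proposal is correct and takes essentially the same approach as the paper: both reduce the statement entirely to the case $n=1$ (the paper's proof is literally a one-line reference to \cite[Proposition 2.2.11 and Remark 2.2.12]{Baudin_Duality_between_perverse_sheaves_and_Cartier_crystals}), and your sketch of the \v{C}ech argument, the preservation of local nilpotence under $Rf_*$, and the passage to the Serre quotient accurately reconstructs the shape of that argument. One small imprecision: the coinduction functor $N \mapsto \prod_{k \geq 0} F^{rk}_* N$ produces injectives in $\Mod(W_n\cO_X[F^r])$ or $\QCoh_{W_nX}^{F^r}$, not in $\IndCoh_{W_nX}^{F^r}$ (such products are typically not ind-coherent), so the comparison $\IndCoh^{F^r}$ versus $\QCoh^{F^r}$ needs a slightly different mechanism---but since you correctly flag this as ``where the real work lies'' and defer to the $n=1$ reference, this does not affect the validity of your reduction.
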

	
	\begin{proof} 
		The proof is identical to the case $n = 1$, see \cite[Proposition 2.2.11 and Remark 2.2.12]{Baudin_Duality_between_perverse_sheaves_and_Cartier_crystals} (note that the first paragraph of the proof in \emph{loc. cit.} can be disregarded here, since we added a semi--separatedness assumption).
	\end{proof}

	For now on, fix a Noetherian $F$--finite $\bF_p$--scheme $X$. Note that if $1 \leq m \leq n$, then any $W_m$--Frobenius module is also a $W_n$--Frobenius module. Nilpotence is certainly preserved, so by \stacksproj{06XK} we obtain a morphism at the level of crystals.
	
	\begin{lem}\label{Crys_supported_at_p^m_is_W_m}
		Let $1 \leq m \leq n$ be an integer. Then the functors $\Crys_{W_mX}^{F^r} \to \Crys_{W_nX}^{F^r}$ and $\IndCrys_{W_mX}^{F^r} \to \IndCrys_{W_nX}^{F^r}$ explained above are fully faithful, and their essential image consist of objects $\cM$ such that $p^m\cM \sim_F 0$.
	\end{lem}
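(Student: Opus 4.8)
Write $I \coloneqq \ker\!\big(R^{n-m}\colon W_n\cO_X \surj W_m\cO_X\big) = V^m(W_{n-m}\cO_X)$, so that the two functors are simply restriction of scalars: a $W_m$--Frobenius module is exactly a $W_n$--Frobenius module annihilated by $I$, carrying the same structural map $\tau$. The plan is to prove the three assertions in turn, the coherent and ind--coherent cases being formally identical (with $\Nil$ replaced by $\LNil$, and finiteness arguments replaced by passing to unions of coherent Frobenius submodules).

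For full faithfulness, I would first note that $\Coh^{F^r}_{W_mX} \inc \Coh^{F^r}_{W_nX}$ is a full, exact subcategory closed under subobjects and quotients, since a sub or a quotient of an $I$--torsion module is again $I$--torsion; moreover nilpotence depends only on $\tau$, so the nilpotent objects of $\Coh^{F^r}_{W_mX}$ are exactly the nilpotent objects of $\Coh^{F^r}_{W_nX}$ that happen to be $W_m$--modules. Describing morphisms in the Serre quotients as the usual filtered colimit of $\Hom(\cM', \cN/\cN'')$ over subobjects $\cM' \inc \cM$ with nilpotent cokernel and nilpotent $\cN'' \inc \cN$, every such subobject and quotient of a $W_m$--module is again a $W_m$--module; hence the two colimit systems coincide term by term, and full faithfulness follows.

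For the essential image, the easy inclusion uses that $p^m = 0$ in $W_m\cO_X$ (which follows from the inclusions $p^j W_m\cO_X \inc V^j(W_{m-j}\cO_X)$ together with $V^m = 0$ on $W_m\cO_X$); thus every object in the image is literally annihilated by $p^m$, in particular $p^m\cM \sim_F 0$. For the reverse inclusion, given a coherent $\cM$ with $p^m\cM$ nilpotent, I would show that $I\cM$ is a nilpotent Frobenius submodule; granting this, the quotient map $\cM \to \cM/I\cM$ has nilpotent kernel and hence is an isomorphism in $\Crys^{F^r}_{W_nX}$, while $\cM/I\cM$ is annihilated by $I$ and therefore is a $W_m$--Frobenius module, exhibiting $\cM$ in the image. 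That $I\cM$ is a Frobenius submodule follows because $F$ commutes with $R$, so $F(I) \inc I$, and then $\tau(I\cM) \inc F^r(I)\cM \inc I\cM$. For its nilpotence, iterating the inclusion $\tau(J\cM) \inc F^r(J)\cM$ gives $\tau^k(I\cM) \inc F^{rk}(I)\cM$ for all $k$; the crucial Witt--vector input is that a bounded power of Frobenius sends $I$ into $(p^m)$, i.e. $F^{c}(I) \inc p^m W_n\cO_X$, which is the $m$--fold iterate of the relation $F\circ V \equiv p$. Taking $rk \geq c$ yields $\tau^k(I\cM) \inc p^m\cM$, and since $p^m\cM$ is nilpotent a few further applications of $\tau$ annihilate $I\cM$.

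The main obstacle is precisely this last Witt--vector identity $F^{c}(I) \inc (p^m)$, relating the Verschiebung ideal $I = V^m(W_{n-m}\cO_X)$ to the ideal $(p^m) = p^m W_n\cO_X$ up to Frobenius, and thereby converting the hypothesis ``$p^m\cM \sim_F 0$'', phrased via $p$, into genuine control of $\cM$ as a module over $W_m\cO_X$. Everything else is either formal manipulation of Serre quotients or the routine translation of the coherent arguments to the ind--coherent setting: there $p^m\cM \in \LNil$, each coherent Frobenius submodule $\cM_i$ has $p^m\cM_i$ coherent and locally nilpotent, hence nilpotent, so the same computation shows $I\cM_i$ nilpotent and $I\cM = \bigcup_i I\cM_i \in \LNil$.
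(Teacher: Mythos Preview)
Your proof is correct, and it rests on the same Witt--vector identity the paper uses, namely $F^m(I) \subseteq p^m W_n\cO_X$ (equivalently, $\ker(W_n\cO_X/p^m \to W_m\cO_X)$ is Frobenius--nilpotent). For the essential image you show directly that $I\cM$ is nilpotent and pass to $\cM/I\cM$; the paper instead first replaces $\cM$ by $\cM/p^m\cM$ and then by $F^{er}_*\cM$, observing that the latter is annihilated by $I$ because $I$ acts through $F^{er}(I) \subseteq (p^m)$. These are two packagings of one computation. For full faithfulness the routes differ more visibly: you use Gabriel's description of morphisms in a Serre quotient together with the fact that $\Coh^{F^r}_{W_mX} \subseteq \Coh^{F^r}_{W_nX}$ is closed under subobjects and quotients, so the two colimit systems computing $\Hom$ coincide termwise; the paper instead appeals to the B\"ockle--Pink trick, factoring an arbitrary crystal morphism $\cM \dashrightarrow \cN$ through an honest morphism $\cM \to F^{er}_*\cN$ and then inverting the nil--isomorphism $\tau_{\cN}^e$. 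Your argument is a touch more self--contained and transparently handles the ind--coherent case in parallel; the paper's argument has the virtue of producing an explicit representative of each morphism.
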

	\begin{proof}
		Since this functor preserves colimits, it is enough to show the statement for $\Crys_{W_nX}^{F^r}$. Faithfulness follows from \stacksproj{06XK}. Now, let $\cM, \cN \in \Crys_{W_mX}^{F^r}$, and let $f \colon \cM_1 \dra \cM_2$ be a morphism in $\Crys_{W_nX}^{F^r}$. By the exact same proof as in \cite[Proposition 3.4.6]{Bockle_Pink_Cohomological_Theory_of_crystals_over_function_fields}, there exists a commutative diagram
		\[ \begin{tikzcd}
			\cM \arrow[rr, "f", dashed] \arrow[rrd, "g"'] &  & \cN \arrow[d, "\tau^e_{\cN}"] \\
			&  & F^{er}_*\cN                  
		\end{tikzcd} \] in $\Crys^{F^r}_{W_nX}$, where $g$ is a morphism of $W_m$--Frobenius modules. Since $\tau_{\cN}$ is a nil-isomorphism, we conclude that $f$ is in the image of $\Hom_{\Crys^{F^r}_{W_mX}}(\cM, \cN)$, so we also obtained fullness.
		
		The essential image of our functor is certainly contained in $p^m$-torsion crystals. Conversely, let $\cM \in \Crys^{F^r}_{W_nX}$ satisfying $p^m\cM \sim_F 0$. Notice that the kernel of the canonical surjection $\overline{R^{n - m}} \colon W_n\cO_X/p^m \to W_m\cO_X$ is killed by some power of the Frobenius, say $F^e (\ker(\overline{R^{n - m}})) = 0$. Then \[ \ker(\overline{R^{n - m}}) \cdot F^e_*\cN = 0, \] so $F^e_*\cN$ defines a $W_m$--Cartier module. Since $\tau_{\cM}^e$ is a nil--isomorphism, we have proven that $\cM$ is in the essential immage of $\Crys^{F^r}_{W_mX} \to \Crys_{W_nX}^{F^r}$.
	\end{proof}
	
	\subsection{The Riemann--Hilbert correspondence}
	\begin{defn-constr}[{\cite[Construction 9.3.1]{Bhatt_Lurie_RH_corr_pos_char}}]\label{extending F-modules to the etale site}
		Let $X$ be a Noetherian $F$--finite 
		$\bF_q$--scheme, and let $\cM \in \QCoh_{W_nX}^{F^r}$. We can extend both $\Mcal$ and $\tau_{\cM}$ to the étale site, and obtain a morphism $\tau_{{\cM}_{\et}} \colon \Mcal_{\et} \ra F^r_*\Mcal_{\et}$ (recall that we can pullback $W_n$--Frobenius modules).
	
		Define $\Sol_n(\Mcal) \coloneqq \ker(\tau_{{\cM}_{\et}} - 1)$, taken in the abelian category $\Sh(X_{\et}, W_n(\Ff_q))$ of \'etale $W_n(\Ff_q)$--sheaves on $X$. This construction defines a functor $\Sol_n \colon \QCoh_{W_nX}^{F^r} \ra \Sh(X_{\et}, W_n(\Ff_q))$.
	\end{defn-constr}
	\begin{rem}
		Since the inclusion $X \inj W_nX$ is a universal homeomorphism by \stacksproj{0CNF}, we identify the étale sites of $X$ and $W_nX$ by \stacksproj{03SI}.
	\end{rem}
	
	\begin{thm}[{\cite[Theorem 9.6.1]{Bhatt_Lurie_RH_corr_pos_char}}]\label{main thm Bhatt Lurie}
		Let $X$ be a Noetherian, $F$--finite $\bF_q$--scheme. Then the functor $\Sol_n$ vanishes on $\LNil$, and induces equivalences of categories 
		\[ \begin{tikzcd}[row sep=small]
			\IndCrys_{W_nX}^{F^r} \arrow[rr, "\cong"] &  & {\Sh(X_{\et}, W_n(\bF_q))}, \mbox{ and }  \\
			\Crys_{W_nX}^{F^r} \arrow[rr, "\cong"]    &  & {\Sh^c(X_{\et}, W_n(\bF_q))},
		\end{tikzcd} \] where $\Sh^c(X_{\et}, W_n(\bF_q))$ denotes the category of constructible étale $W_n(\bF_q)$--sheaves.
		
		This functor preserves tensor products, pullbacks and derived proper pushforwards (there is no ambiguity in what this means by \autoref{all derived pushforwards are the same for F-modules}). Furthermore, given $1 \leq m \leq n$, the diagram \[  \begin{tikzcd}
			\IndCrys_{W_mX}^{F^r} \arrow[rr, hook] \arrow[d, "\Sol_m"'] &  & \IndCrys_{W_nX}^{F^r} \arrow[d, "\Sol_n"] \\
			{\Sh(X_{\et}, W_m(\bF_q))} \arrow[rr, hook]                 &  & {\Sh(X_{\et}, W_n(\bF_q))}               
		\end{tikzcd} \] commutes.
	\end{thm}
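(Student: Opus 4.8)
The plan is to carry out the elementary reductions by hand and to import the substance of the statement from Bhatt--Lurie's correspondence, once our functor $\Sol_n$ has been matched with theirs. First I would dispose of the vanishing on $\LNil$, which is exactly what lets $\Sol_n$ descend to the crystal categories. If $\cM$ is nilpotent, say $\tau_{\cM}^N = 0$, then on the étale site the endomorphism $\tau_{\cM_{\et}} - 1$ is invertible, with explicit inverse $-(1 + \tau_{\cM_{\et}} + \cdots + \tau_{\cM_{\et}}^{N-1})$, since $(1 - \tau)(1 + \tau + \cdots + \tau^{N-1}) = 1 - \tau^N = 1$. Hence $\Sol_n(\cM) = \ker(\tau_{\cM_{\et}} - 1) = 0$. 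As the assignment $\cM \mapsto \cM_{\et}$ commutes with filtered colimits and kernels commute with filtered colimits in a Grothendieck category (AB5), $\Sol_n$ commutes with filtered colimits; since every object of $\LNil$ is a filtered union of nilpotent submodules, $\Sol_n$ vanishes on all of $\LNil$ and therefore factors through $\IndCrys_{W_nX}^{F^r}$ and $\Crys_{W_nX}^{F^r}$.

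For the two equivalences and the preservation of tensor products and pullbacks, I would invoke \cite[Theorem 9.6.1]{Bhatt_Lurie_RH_corr_pos_char} directly. The engine of their argument is that, étale-locally, $\tau_{\cM_{\et}} - 1$ is surjective on (ind-)coherent unit modules — the Witt-vector analogue of Artin--Schreier surjectivity, reflecting that $\Sol_n(W_n\cO_X) = W_n(\bF_q)$ étale-locally — so that $\Sol_n(\cM)$ sits in a short exact sequence recovering $\cM_{\et}$ from its solutions. The quasi-inverse sends an étale $W_n(\bF_q)$-sheaf $\cF$ to $\cF \otimes_{W_n(\bF_q)} W_n\cO_X$ with its tautological Frobenius structure; monoidality of this assignment gives compatibility with tensor products, and its naturality gives compatibility with pullbacks. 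The second equivalence then follows from the first by matching constructibility on the sheaf side with coherence on the module side.

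The preservation of derived proper pushforwards is the one genuinely analytic input, and is again part of Bhatt--Lurie's package: properness and $F$-finiteness are what guarantee that $Rf_*$ preserves the relevant finiteness and commutes with the formation of solutions, and \autoref{all derived pushforwards are the same for F-modules} removes any ambiguity about which pushforward is meant. For the compatibility square with the restriction functors in the range $1 \le m \le n$, I would argue directly: a $W_m$-Frobenius module, regarded as a $W_n$-Frobenius module, carries literally the same map $\tau_{\cM_{\et}} - 1$ of étale sheaves, so its kernel is the same object whether computed by $\Sol_m$ or $\Sol_n$; this kernel is annihilated by $\ker\!\left(W_n(\bF_q) \twoheadrightarrow W_m(\bF_q)\right)$ and hence lands in $\Sh(X_{\et}, W_m(\bF_q))$, which is precisely the asserted commutativity modulo the identification of the two inclusions.

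I expect the only real obstacle to be bookkeeping rather than mathematics: checking that the functor $\Sol_n$ of \autoref{extending F-modules to the etale site}, together with its $W_n(\bF_q)$-linear structure and its behaviour under proper pushforward, coincides with the solution functor for which Bhatt--Lurie prove the theorem. Everything else is either elementary — the $\LNil$ vanishing and the restriction square — or a direct citation.
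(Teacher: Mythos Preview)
Your overall shape is right, but what you call ``bookkeeping'' is in fact the one place where the paper does real work. Bhatt--Lurie's Theorem 9.6.1 is not stated for $\IndCrys_{W_nX}^{F^r}$: it gives an equivalence between \emph{algebraic} (perfect, locally holonomic) $W_n$--Frobenius modules and étale $W_n(\bF_q)$--sheaves. Your quasi-inverse $\cF \mapsto \cF \otimes_{W_n(\bF_q)} W_n\cO_X$ lands in perfect modules, not in coherent ones, so it does not yield a crystal on the nose. The paper bridges this by proving separately (\autoref{perfection_induces_iso_crys_and_hol}) that perfection induces equivalences $\Crys_{W_nX}^{F^r} \simeq \Hol_{W_nX}^{F^r}$ and $\IndCrys_{W_nX}^{F^r} \simeq \Alg_{W_nX}^{F^r}$, and that $\Sol_n$ is invariant under perfection. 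This is not hard, but it is a genuine lemma with content (essentially surjective is by definition; fullness requires a small argument using Noetherianity to factor a map $\cM^{1/p^\infty} \to \cN^{1/p^\infty}$ through some finite stage). You should not fold it into ``checking that the functors coincide''.

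A second, smaller point: your argument for the compatibility square asserts that a $W_m$--module regarded as a $W_n$--module carries ``literally the same'' $\tau_{\cM_{\et}} - 1$. This is true, but only because the étale extension functor $(\cdot)_{\et}$ commutes with the pushforward along $W_mX \hookrightarrow W_nX$, which in turn requires that for every étale $U \to X$ the square with $W_mU, W_nU, W_mX, W_nX$ is Cartesian (\autoref{base_change_etale_Witt}). The paper makes this explicit; you should too.

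The remaining differences are stylistic. For tensor products the paper reduces to geometric stalks and checks by hand that $\RH_n$ is monoidal over an algebraically closed field, whereas you appeal to monoidality of the quasi-inverse globally; both are fine. For proper pushforward the paper gives the short argument that $R^if_*$ preserves holonomicity (coherence plus perfection) and hence algebraicity, rather than citing Bhatt--Lurie's package as a black box; either is acceptable, but the explicit version is cleaner given that you have already changed the source category from $\Alg$ to $\IndCrys$.
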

	For this proof, we will need a few definitions from \cite{Bhatt_Lurie_RH_corr_pos_char}.
	
	\begin{defn}
		Let $X$ be a Noetherian $F$--finite $\bF_p$--scheme, and let $\cM \in \QCoh_{W_nX}^{F^r}$.
		\begin{itemize}
			\item The \emph{perfection} of $\cM$ is by definition \[ \cM^{1/p^{\infty}} \coloneqq \colim F^{er}_*\cM. \]
			We say that $\cM$ is \emph{perfect} if the natural map $\cM \to \cM^{1/p^{\infty}}$ is an isomorphism (i.e. if $\tau_\cM$ is an isomorphism).
			\item We say that $\cM$ is \emph{holonomic} if it the perfection of a coherent $W_n$--Frobenius module. The full subcategory of holonomic $W_n$--Frobenius modules is denoted $\Hol_{W_nX}^{F^r}$.
			\item We say that $\cM$ is \emph{algebraic} if it is the union of its holonomic $W_n$--Frobenius submodules (in particular it must be perfect). The full subcategory of algebraic $W_n$--Frobenius modules is denoted $\Alg_{W_nX}^{F^r}$.
		\end{itemize} 
	\end{defn}

	\begin{rem}\label{definitions_of_holonomicity}
		When $X$ is affine, our definition of algebraic $W_n$--Frobenius modules agrees with \cite[Definition 9.5.2]{Bhatt_Lurie_RH_corr_pos_char}. Indeed, condition (c) of Proposition 9.5.1 in \emph{loc. cit.} is exactly the same as saying that any element lives in a finitely generated $W_n$--Frobenius module (and hence a holonomic one too by taking perfection).
		
		Note that being holonomic and algebraic can be checked locally (this follows by the same argument as in \cite[Section 10.4]{Bhatt_Lurie_RH_corr_pos_char}).
	\end{rem}

	\begin{lem}\label{perfection_induces_iso_crys_and_hol}
		Let $X$ be a Noetherian, $F$--finite $\bF_p$--scheme. Then perfection induces equivalences of categories \[ \Crys_{W_nX}^{F^r} \to \Hol_{W_nX}^{F^r} \] and \[ \IndCrys_{W_nX}^{F^r} \to \Alg_{W_nX}^{F^r}. \]
	\end{lem}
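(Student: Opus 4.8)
The plan is to show that perfection is a reflection onto the perfect (unit) $W_n$--Frobenius modules and that its kernel is exactly the (locally) nilpotent modules; both equivalences then drop out of the same formalism, uniformly in $n$, exactly as in the case $n=1$. First, perfection $P(\cM) \coloneqq \cM^{1/p^{\infty}} = \colim_e F^{er}_*\cM$ is exact, being a filtered colimit of the exact functors $F^{er}_*$, and it sends a (locally) nilpotent module to $0$, since then the transition maps $F^{er}_*\tau_{\cM}$ of the defining colimit are eventually zero on every local section. Hence $P$ vanishes on $\Nil$ (resp.\ $\LNil$) and descends to functors $\Crys_{W_nX}^{F^r} \to \Hol_{W_nX}^{F^r}$ and $\IndCrys_{W_nX}^{F^r} \to \Alg_{W_nX}^{F^r}$. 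Essential surjectivity is immediate: $\Hol_{W_nX}^{F^r}$ is by definition the image of $\Coh_{W_nX}^{F^r}$ under $P$, and every algebraic module is a filtered union of holonomic ones, hence $P$ of an ind--coherent module (perfection commutes with filtered colimits). It remains to prove full faithfulness.

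The key point is that $P$ is left adjoint to the inclusion of the perfect modules. Concretely, for any perfect $\cP$ and any $\cN$, precomposition with $\tau_{\cN}$ is a bijection $\Hom(F^r_*\cN, \cP) \xrightarrow{\sim} \Hom(\cN, \cP)$, with inverse $f \mapsto \tau_{\cP}^{-1}\circ F^r_* f$; injectivity uses that $\tau_{\cP}$ is an isomorphism together with the Frobenius--compatibility of morphisms. Consequently all transition maps in $\Hom(P\cM, \cP) = \lim_e \Hom(F^{er}_*\cM, \cP)$ are bijective, so the unit $\cM \to P\cM$ induces an isomorphism $\Hom(P\cM, \cP) \cong \Hom(\cM, \cP)$. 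Taking $\cP = P\cN$ and using that $\cM$ is coherent — so that $\Hom(\cM, -)$ commutes with the filtered colimit defining $P\cN$ — we obtain
\[ \Hom_{\Hol_{W_nX}^{F^r}}(P\cM, P\cN) \cong \colim_e \Hom_{\Coh_{W_nX}^{F^r}}(\cM, F^{er}_*\cN), \]
the transition maps being postcomposition with the powers of $\tau_{\cN}$.

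Finally, this colimit is exactly the formula computing $\Hom_{\Crys_{W_nX}^{F^r}}(\cM, \cN)$: morphisms in the Serre quotient $\Coh_{W_nX}^{F^r}/\Nil$ are represented, after composing with a power of $\tau$, by honest morphisms $\cM \to F^{er}_*\cN$, two of them agreeing in $\Crys_{W_nX}^{F^r}$ iff they agree after increasing $e$. This is the $W_n$--analogue of the Böckle--Pink presentation of $\Hom$--groups in a crystal category already invoked in the proof of \autoref{Crys_supported_at_p^m_is_W_m}, and one checks that the unit of the adjunction identifies the two descriptions, giving full faithfulness of $\Crys_{W_nX}^{F^r} \to \Hol_{W_nX}^{F^r}$. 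More structurally, $P$ is an exact functor with a fully faithful exact right adjoint — the inclusion of the abelian subcategory of perfect modules — and with kernel $\LNil$, so Gabriel's characterization of Serre localizations identifies $\IndCrys_{W_nX}^{F^r} = \IndCoh_{W_nX}^{F^r}/\LNil$ with $\Alg_{W_nX}^{F^r}$; the equivalence $\Crys_{W_nX}^{F^r} \cong \Hol_{W_nX}^{F^r}$ then follows by restricting to the objects coming from a single coherent module, using that $\Crys_{W_nX}^{F^r} \to \IndCrys_{W_nX}^{F^r}$ is fully faithful (as for $n=1$). The main obstacle is precisely this comparison: one must make sure the abstract $\Hom$--computation for perfections genuinely matches the intrinsic $\Hom$--groups of the crystal category, which is where the Böckle--Pink--type clearing of denominators is needed.
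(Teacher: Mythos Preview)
Your argument is correct and follows essentially the same route as the paper: exactness of perfection, identification of its kernel with $\Nil$ (resp.\ $\LNil$), and a Noetherian/compactness step to produce the factorization $\cM \to F^{er}_*\cN$ that yields fullness. The paper packages these three ingredients slightly differently---it invokes \stacksproj{06XK} for faithfulness and does the Noetherian factoring by hand for fullness, then gets the $\IndCrys$ statement from the $\Crys$ one by preservation of colimits---whereas you recast the same content through the adjunction $P \dashv (\text{inclusion of perfect modules})$ and Gabriel's localization criterion; but the substance is identical.
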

	\begin{proof}
		Since perfection preserves colimits, it is enough to show the first statement. By definition, perfection gives an essentially surjective exact functor $\Coh_{W_nX}^{F^r} \to \Hol_{W_nX}^{F^r}$. Since it exactly annihilates nilpotent objects, we know by \stacksproj{06XK} that it induces an essentially surjective faithful functor $\Crys_{W_nX}^{F^r} \to \Hol_{W_nX}^{F^r}$. It remains to show that this functor is full, so let $f \colon \cM^{1/p^{\infty}} \to \cN^{1/p^{\infty}}$ be a morphism of holonomic $W_n$--Frobenius modules, where $\cM$ and $\cN$ are coherent. Since $\cM$ is a Noetherian object, the composition $\cM \to \cM^{1/p^\infty} \to \cN^{1/p^{\infty}}$ factors through $F^{er}_*\cN$ for some $e > 0$. Say that $g \colon M \to F^{er}_*\cN$ is the induced map, and let $h \coloneqq (\tau_N^{-1})^e \circ g \colon \cM \dashrightarrow \cN$ (it is only a morphism of crystals!). Then by construction, $h^{1/p^{\infty}} = f$, so we have proven fullness too.
	\end{proof}
	
	\begin{proof}[Proof of \autoref{main thm Bhatt Lurie}]
		We start with the first equivalence. As in \cite[Theorem 10.2.7]{Bhatt_Lurie_RH_corr_pos_char}, it is enough to show this when $X$ is affine, say $X = \Spec(R)$. This is then a consequence of \autoref{perfection_induces_iso_crys_and_hol} and \cite[Theorem 9.6.1]{Bhatt_Lurie_RH_corr_pos_char} (note that the solution functor is invariant under taking perfection by the same proof as in \cite[Proposition 3.2.9]{Bhatt_Lurie_RH_corr_pos_char}). The second equivalence follows directly from the first one, since an equivalence of categories preserves compact objects. 
		
		The proof that $\Sol_n$ preserves pullbacks is the same as that in the case $n = 1$ (see e.g. \cite[Proposition 10.2.2]{Bockle_Pink_Cohomological_Theory_of_crystals_over_function_fields}). To show that it preserves derived pushforwards, it suffices to show as in the proof of \cite[Theorem 10.5.5]{Bhatt_Lurie_RH_corr_pos_char} that given a proper morphism $f \colon X \to Y$ and $\cM \in \Alg_{W_nX}^{F^r}$, we have that $R^if_*\cM \in \Alg_{W_nY}^{F^r}$ for all $i \geq 0$. Since the functors $R^if_*$ preserve coherence and perfection, they preserve holonomicity. Hence, they also preserve algebraicity. 
		
		Let us to deal with tensor products.  By definition of $\Sol_n$ and the Frobenius structure on the tensor product (apply the structural map on each element in the tensor), it is immediate that there is a natural transformation \[ \Sol_n(-) \otimes_{W_n{\bF_q}} \Sol_n(-) \to \Sol_n(- \otimes_{W_n\cO_X} -). \] Since both $\Sol$ and tensor products preserve colimits, it is enough to show the result on $\Crys_{W_nX}^{F^r}$. Since we can check it on stalks, we may assume that $X = \Spec k$ with $k$ algebraically closed. It is enough to show that $\RH_n \coloneqq \Sol_n^{-1}$ preserves tensor products. However, in this case it is immediate to see that (up to natural isomorphism), $\RH_n$ sends a finite $W_n(\bF_q)$--module $V$ to $(V \otimes_{W_n(\bF_q)} W_n(k)$, $1 \otimes F^r)$, so compatibility with tensor product follows.
		
		Finally, let us show the last compatibilities, so fix $1 \leq m \leq n$. Since for any étale morphism $f \colon U \to X$, the diagram
		 \[ \begin{tikzcd}
			W_mU \arrow[r] \arrow[d, hook] & W_mX \arrow[d, hook] \\
			W_nU \arrow[r]     & W_nX          
		\end{tikzcd} \] is a base change square by \autoref{base_change_etale_Witt}, we deduce by flat base change that the functor $(\cdot)_{\et}$ of \autoref{extending F-modules to the etale site} commutes with $\QCoh_{W_mX}^{F^r} \inj \QCoh_{W_nX}^{F^r}$. Hence, we are done.
	\end{proof}

	\begin{rem}
		Thanks to the last compatibility in \autoref{main thm Bhatt Lurie}, we will remove the subscript $n$ in the notation $\Sol_n$.
	\end{rem}

	\subsection{Comparison between certain derived categories}

	Fix a Noetherian $F$--finite $\bF_p$--scheme $X$. Besides the Riemann--Hilbert correspondence, an important tool that we will need for the duality is to show that the natural functor $D^b(\Coh_{W_nX}^{F^r}) \to D^b_{\coh}(\QCoh_{W_nX}^{F^r})$ is an equivalence of categories. We will also need it at the level of crystals, and some variations of this statement.
	
	\begin{defn}
		Let $\cM \in \QCoh_{W_nX}^{F^r}$. We denote by $\ind_{W_nX}(\cM) \subseteq \cM$ the biggest ind--coherent $W_n$--Frobenius submodule of $\cM$.
	\end{defn}

	The main technical ingredient to show what we want is to show that given $\cM \in \IndCoh_{W_nX}^{F^r}$, we have that $R^i\ind(\cM) = 0$ for all $i > 0$. Unlike the case of $W_n$--Cartier modules, we could not find a way to reduce to the case $n = 1$. 
	
	\begin{prop}\label{ind_coh_is_ind_acyclic_F_modules}
		Let $\cM \in \IndCoh_{W_nX}^{F^r}$. Then for all $i > 0$, $R^i\ind_{W_nX}(\cM) = 0$.
	\end{prop}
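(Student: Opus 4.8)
The plan is to recognise $\ind_{W_nX}$ as the torsion radical of a hereditary torsion theory on $\QCoh_{W_nX}^{F^r}$ and then run a standard dimension--shifting argument, avoiding any reduction to the case $n = 1$. Recall that $\ind_{W_nX}$ is right adjoint to the inclusion $\IndCoh_{W_nX}^{F^r} \hookrightarrow \QCoh_{W_nX}^{F^r}$ (it assigns to $\cM$ its largest ind--coherent $W_n$--Frobenius submodule), hence it is left exact and $R^\bullet \ind_{W_nX}$ may be computed via injective resolutions in $\QCoh_{W_nX}^{F^r}$, which has enough injectives by \autoref{all cats of F-mods are Grothendieck}.

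First I would check that $\IndCoh_{W_nX}^{F^r}$ is a torsion class that is moreover closed under subobjects. Closure under quotients and arbitrary sums is formal: the image of a coherent $W_n$--Frobenius module is coherent, and a sum of ind--coherent submodules is ind--coherent. Closure under subobjects (the heredity of the torsion theory) follows from Noetherianity of $W_n\cO_X$: if $\cN \subseteq \cM = \bigcup_j \cM_j$ with each $\cM_j$ coherent, then $\cN = \bigcup_j (\cN \cap \cM_j)$, and each $\cN \cap \cM_j$ is a $W_n$--Frobenius submodule of a coherent module, hence coherent. Finally, closure under extensions is the only non--formal point: given $0 \to \cM' \to \cM \to \cM'' \to 0$ with the outer terms ind--coherent and $m \in \cM$, the Frobenius submodule of $\cM''$ generated by the image $\overline m$ is coherent, so finitely many $\tau^k \overline m$ generate it; lifting the resulting relation $\tau^{N+1}m - \sum_k a_k \tau^k m$ produces a single element $w \in \cM'$, which lies in a coherent Frobenius submodule $\cC' \subseteq \cM'$, and then the $W_n\cO_X$--module generated by $m, \tau m, \dots, \tau^N m$ together with $\cC'$ is a coherent $W_n$--Frobenius submodule of $\cM$ containing $m$.

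With these closure properties in hand, $\IndCoh_{W_nX}^{F^r}$ is the torsion class of a hereditary torsion theory with radical $\ind_{W_nX}$. I would then invoke the classical fact from the theory of hereditary torsion theories that the radical sends injectives to injectives: for every injective $E \in \QCoh_{W_nX}^{F^r}$, the object $\ind_{W_nX}(E)$ is again injective. This is the crucial and non--circular input, because $\ind_{W_nX}(E)$ is itself ind--coherent, so its $\ind_{W_nX}$--acyclicity cannot be deduced from the very statement we are proving.

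The conclusion is a dimension shift. Given $\cM \in \IndCoh_{W_nX}^{F^r}$, embed $\cM \hookrightarrow E$ with $E$ injective; since $\cM$ is ind--coherent it lands in $\ind_{W_nX}(E)$, which is injective, and the quotient $Q \coloneqq \ind_{W_nX}(E)/\cM$ is again ind--coherent. Applying $R^\bullet \ind_{W_nX}$ to $0 \to \cM \to \ind_{W_nX}(E) \to Q \to 0$ and using $R^i \ind_{W_nX}(\ind_{W_nX}(E)) = 0$ for $i \geq 1$ gives $R^1 \ind_{W_nX}(\cM) = 0$ together with isomorphisms $R^{i+1}\ind_{W_nX}(\cM) \cong R^i \ind_{W_nX}(Q)$ for $i \geq 1$; since $Q$ is again ind--coherent, induction on $i$ yields $R^i \ind_{W_nX}(\cM) = 0$ for all $i > 0$. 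The main obstacle is precisely the extension--closure of $\IndCoh_{W_nX}^{F^r}$ sketched above, as this is where the $W_n$--structure must be handled directly rather than by reduction to $n = 1$; everything else is either formal torsion--theoretic nonsense or an immediate consequence of Noetherianity.
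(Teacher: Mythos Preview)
Your approach has a genuine gap at the step you flag as ``crucial and non--circular input.'' The assertion that the radical of a \emph{hereditary} torsion theory sends injectives to injectives is not a classical fact---it is false in general. What is classical is that this property characterizes \emph{stable} torsion theories: one has $t(E)$ injective in $\cC$ for every injective $E$ if and only if $\cT$ is closed under essential extensions (equivalently, injective envelopes). Hereditariness only gives that $\cF$ is closed under injective envelopes, and over non--commutative rings (such as $W_n\cO_X[F^r]$) the two conditions genuinely differ. The right adjoint $t = \ind_{W_nX}$ does preserve injectives as a functor into $\cT = \IndCoh_{W_nX}^{F^r}$, but that only tells you $\ind_{W_nX}(E)$ is injective in $\IndCoh_{W_nX}^{F^r}$; to run your dimension shift you need it injective (or at least $\ind$--acyclic) in $\QCoh_{W_nX}^{F^r}$, and that is exactly the statement you are trying to prove, applied to the object $\ind_{W_nX}(E)$.

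So the argument is circular unless you independently establish stability, i.e.\ that injective envelopes (in $\QCoh_{W_nX}^{F^r}$) of ind--coherent $W_n$--Frobenius modules remain ind--coherent. That is precisely the non--trivial content of B\"ockle--Pink's Theorem 5.2.10: their localisation functors and the extension results (their Propositions 4.5.1--4.5.2, 5.1.12--5.1.13) are what produce, for a coherent $F$--submodule of an injective, a coherent $F$--overmodule inside that same injective---this is the substance of stability. The paper's proof adapts exactly these steps to $W_n$, noting that one must localise at Teichm\"uller lifts (so that $F^r([t]) = [t]^q$) and replace the inclusion $F^r(\cI)\subseteq \cI^q$ by the weaker \autoref{Witt_ideal_inclusions}. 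Your closure--under--extensions sketch is fine, but it is not where the difficulty lies; the hard step is the one you tried to outsource to torsion--theoretic generalities, and that shortcut does not exist.
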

	\begin{proof}
		The same proof as in \cite[Theorem 5.2.10]{Bockle_Pink_Cohomological_Theory_of_crystals_over_function_fields} works. The only few ``differences'' are in gluing the localisation functor in Proposition 5.1.13 and Lemma 5.1.12, and in the proofs of Propositions 4.5.1 and 4.5.2.
		
		For the last two statements in \emph{loc. cit.}, they localized by any element of their ring. In the case of Witt vectors, we only need to localize by Teichmüller lifts though, since $(W_nR)_{[t]} = W_n(R_t)$ for any $t \in R$. Thus, although the proof of Lemma 5.1.12 would not work if we localized by any $t' \in W_nR$ (due to the difference between $F^r(t')$ and $(t')^q$), it works perfectly fine for Teichmüller lifts. This is the right context in which 5.1.12 and 5.1.13 work, so we can indeed glue their localisation functor.
		
		Now, let us mention 4.5.1 and 4.5.2. In their proof, there is an ideal $\cI_0 \subseteq \cO_X$ that comes up, and an important property is that $F^r(\cI) \subseteq \cI^q$. The analogue that shows up here is an ideal of the form $W_n\cI_0$, but it seems to be wrong in general that $F^r(W_n\cI_0) \subseteq (W_n\cI_0)^q$. The way around is to use the ideals $W_n(\cI_0)$, so that $F^r(W_n\cI_0) \subseteq W_n(\cI_0^q)$. This can indeed be done thanks to \autoref{Witt_ideal_inclusions}. For example, following the proof of 4.5.1, we have that $(W_n\cI_0)^m\tau(F^{r, *}\ttilde{\cF}) \inc \ttilde{\cF}$ for some $m > 0$. We can then use \autoref{Witt_ideal_inclusions} to deduce that also $(W_n\cI_0^{m'})\tau(F^{r, *}\ttilde{\cF}) \inc \ttilde{\cF}$ for some (perhaps bigger) $m' > 0$.
	\end{proof}

	\begin{prop}\label{D^b(Coh) = D^b_{coh}(QCoh) for F-modules}
		Let $X$ be a Noetherian $F$--finite $\Ff_p$-scheme. Then for $* \in \{-, b\}$, the natural functors 
		\[ \begin{tikzcd}[row sep=small]
			D^*(\Coh_{W_nX}^{F^r}) \arrow[rr]  &  & D^*_{\coh}(\QCoh_{W_nX}^{F^r}), \mbox{ and }  \\
			D^*(\Crys_{W_nX}^{F^r}) \arrow[rr] &  & D^*_{\crys}(\QCrys_{W_nX}^{F^r})
		\end{tikzcd}  \] 
		are equivalences of categories, where by $D^*_{\crys}(\QCrys_{W_nX}^{F^r})$ we mean complexes whose cohomology sheaves are in the essential image of $\Crys_{W_nX}^{F^r} \to \QCrys_{W_nX}^{F^r}$.
	\end{prop}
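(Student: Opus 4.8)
The plan is to prove the module equivalence in full and then deduce the crystal one by passing to the Serre quotients. For the module statement I would factor the natural comparison functor through the intermediate category of ind--coherent Frobenius modules, writing
\[ D^*(\Coh_{W_nX}^{F^r}) \xrightarrow{\alpha} D^*_{\coh}(\IndCoh_{W_nX}^{F^r}) \xrightarrow{\beta} D^*_{\coh}(\QCoh_{W_nX}^{F^r}), \]
where $D^*_{\coh}(\IndCoh_{W_nX}^{F^r})$ denotes complexes of ind--coherent modules whose cohomology is coherent, and then show that each of $\alpha$ and $\beta$ is an equivalence. Their composite is the functor in the statement.

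The equivalence $\beta$ is exactly where \autoref{ind_coh_is_ind_acyclic_F_modules} is used. The inclusion $\iota \colon \IndCoh_{W_nX}^{F^r} \inj \QCoh_{W_nX}^{F^r}$ is exact and has right adjoint $\ind_{W_nX}$; since both categories are Grothendieck (\autoref{all cats of F-mods are Grothendieck}), I can form $R\ind_{W_nX}$ using K--injective resolutions and derive the adjunction to $(\iota, R\ind_{W_nX})$ at the level of derived categories. For a complex whose cohomology sheaves are coherent (hence ind--coherent), the hypercohomology spectral sequence $R^i\ind_{W_nX}(\cH^j(-)) \Rightarrow \cH^{i+j}(R\ind_{W_nX}(-))$ degenerates to its $i = 0$ column by \autoref{ind_coh_is_ind_acyclic_F_modules}. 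Consequently the unit $\id \to R\ind_{W_nX} \circ \iota$ and counit $\iota \circ R\ind_{W_nX} \to \id$ of the derived adjunction are isomorphisms on complexes with coherent cohomology (the single surviving column makes the argument insensitive to unboundedness below, covering both $* = -$ and $* = b$), so $\beta$ is an equivalence with quasi--inverse induced by $R\ind_{W_nX}$.

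For $\alpha$, I am in the classical situation of comparing the derived category of a Noetherian weak Serre subcategory with that of its ind--completion: by definition every object of $\IndCoh_{W_nX}^{F^r}$ is the filtered union of its coherent Frobenius submodules, and the coherent objects are precisely the Noetherian (compact) ones. Full faithfulness then follows since $\Ext$--groups out of coherent objects commute with filtered colimits, and essential surjectivity for $* = -$ follows by constructing, from the top down, a bounded--above resolution by coherent Frobenius modules of any complex with coherent cohomology; here I would invoke the general criterion exactly as in the case $n = 1$ (\cite{Baudin_Duality_between_perverse_sheaves_and_Cartier_crystals}, following \cite{Bockle_Pink_Cohomological_Theory_of_crystals_over_function_fields}). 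The bounded case $* = b$ is then bootstrapped from $* = -$ by applying the smart truncation $\tau_{\geq -N}$: because $\Coh_{W_nX}^{F^r}$ is closed under kernels and cokernels inside $\QCoh_{W_nX}^{F^r}$, the truncated complex is again a bounded complex of coherent Frobenius modules.

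Finally, for the crystal equivalence I would run the identical three--step argument with $(\QCoh, \IndCoh, \Coh)$ replaced by $(\QCrys, \IndCrys, \Crys)$, the only genuinely new input being the analogue of \autoref{ind_coh_is_ind_acyclic_F_modules}, i.e. the vanishing of the higher right derived functors of the right adjoint to $\IndCrys_{W_nX}^{F^r} \inj \QCrys_{W_nX}^{F^r}$ on ind--coherent crystals. I would deduce this from the module statement by using that the Serre quotient functor is exact and compatible with $\ind_{W_nX}$, so that the vanishing of $R^i\ind_{W_nX}$ descends to crystals (as in the quotient formalism of \stacksproj{06XK}); alternatively, \autoref{perfection_induces_iso_crys_and_hol} lets one replace crystals by the honest subcategories of holonomic and algebraic modules and transport the acyclicity there directly. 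I expect this transfer across the Serre quotient --- ensuring that $R\ind$ descends compatibly and that the two notions of ``coherent cohomology'' match on both sides --- to be the main technical obstacle; everything else is a formal consequence of \autoref{ind_coh_is_ind_acyclic_F_modules} and mirrors the $n = 1$ treatment.
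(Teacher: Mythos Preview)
Your proposal is correct and follows essentially the same route as the paper: the paper's proof simply says that, given \autoref{ind_coh_is_ind_acyclic_F_modules}, the argument is identical to \cite[Theorem 5.3.1]{Bockle_Pink_Cohomological_Theory_of_crystals_over_function_fields}, and what you have written is precisely an unpacking of that argument---factor through $\IndCoh$, use the derived adjunction $(\iota, R\ind_{W_nX})$ together with the acyclicity to invert $\beta$, then invoke the standard Noetherian comparison for $\alpha$, and repeat for crystals. Your identification of the transfer of $R\ind$--acyclicity across the Serre quotient as the one nontrivial step in the crystal case is accurate and is exactly what the B\"ockle--Pink framework handles.
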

	\begin{proof}
		Thanks to \autoref{ind_coh_is_ind_acyclic_F_modules}, the proof is now identical to that of \cite[Theorem 5.3.1]{Bockle_Pink_Cohomological_Theory_of_crystals_over_function_fields}.
	\end{proof}

	\begin{lem}\label{D^b(Coh) = D^b_{coh}(O_X) for F-modules}
		Let $X$ be a semi--separated Noetherian $F$-finite $\bF_p$--scheme. Then the natural functor \[ D^b(\Coh_{W_nX}^{F^r}) \to D^b_{\coh}(W_n\cO_X[F^r]) \] is an equivalence.
	\end{lem}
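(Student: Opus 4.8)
The plan is to factor the functor through the quasi--coherent level. Indeed, the natural functor factors as
\[ D^b(\Coh_{W_nX}^{F^r}) \to D^b_{\coh}(\QCoh_{W_nX}^{F^r}) \to D^b_{\coh}(W_n\cO_X[F^r]), \]
where the first arrow is an equivalence by \autoref{D^b(Coh) = D^b_{coh}(QCoh) for F-modules}. Thus it is enough to show that the second functor, induced by the exact inclusion of abelian categories $\QCoh_{W_nX}^{F^r} \inj \Mod(W_n\cO_X[F^r])$, is an equivalence. Since $W_nX$ is Noetherian, $\QCoh_{W_nX}^{F^r}$ is a weak Serre subcategory of $\Mod(W_n\cO_X[F^r])$ (kernels, cokernels and extensions of quasi--coherent modules are quasi--coherent, and the Frobenius structures glue along these operations), so the comparison functor between the relevant derived categories is at least well defined.

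First I would upgrade to the bounded--below setting: it suffices to prove that $D^+(\QCoh_{W_nX}^{F^r}) \to D^+_{\QCoh_{W_nX}^{F^r}}(\Mod(W_n\cO_X[F^r]))$ is an equivalence, since the bounded--coherent statement then follows by restricting to complexes with coherent cohomology (coherent $W_n$--Frobenius modules form a Serre subcategory) and then to bounded complexes: an object of $D^b_{\coh}(W_n\cO_X[F^r])$ lies in $D^+$ and has cohomology in $\QCoh_{W_nX}^{F^r}$, so it is matched by an object of $D^+(\QCoh_{W_nX}^{F^r})$ with coherent bounded cohomology. The key input is that every injective object of the Grothendieck category $\QCoh_{W_nX}^{F^r}$ (injectives exist by \autoref{all cats of F-mods are Grothendieck}) remains injective in $\Mod(W_n\cO_X[F^r])$. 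Granting this, the usual argument applies: resolving a bounded--below complex by injectives of $\QCoh_{W_nX}^{F^r}$ computes the $\Hom$--groups identically in both derived categories, yielding full faithfulness, while essential surjectivity onto objects with quasi--coherent cohomology follows from the weak Serre property together with the same injective resolutions.

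The injectivity comparison is the only genuine point, and I would establish it exactly as in the case $n = 1$ treated in \cite{Baudin_Duality_between_perverse_sheaves_and_Cartier_crystals}. Using the coinduction functor right adjoint to the exact forgetful functor $\Mod(W_n\cO_X[F^r]) \to \Mod(W_n\cO_X)$, one embeds any object of $\QCoh_{W_nX}^{F^r}$ into a quasi--coherent $W_n$--Frobenius module which is injective in $\Mod(W_n\cO_X[F^r])$, built from an injective quasi--coherent $W_n\cO_X$--module $\Ical_0$; such $\Ical_0$ is automatically injective as a plain $W_n\cO_X$--module because $W_nX$ is locally Noetherian. An injective of $\QCoh_{W_nX}^{F^r}$ is then a direct summand of such an object, hence injective in the ambient category. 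The main obstacle is checking that this coinduction preserves quasi--coherence, since it involves an infinite product of the pushforwards $F^{re}_*(-)$ and infinite products need not be quasi--coherent in general; I expect to handle this by reducing to an affine cover (where semi--separatedness of $X$ keeps the bookkeeping under control) and exploiting that $F$ is finite, precisely as in the $n = 1$ argument, once the Witt--theoretic finiteness facts of \autoref{general_statements_Witt_vectors} are in place. Everything else is formal.
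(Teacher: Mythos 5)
Your overall skeleton is the right one and certainly matches the intended argument: factor the functor as
\[ D^b(\Coh_{W_nX}^{F^r}) \to D^b_{\coh}(\QCoh_{W_nX}^{F^r}) \to D^b_{\coh}(W_n\cO_X[F^r]), \]
invoke \autoref{D^b(Coh) = D^b_{coh}(QCoh) for F-modules} for the first arrow, and reduce the second to a $D^+$--comparison between $\QCoh_{W_nX}^{F^r}$ and $\Mod(W_n\cO_X[F^r])$. (The paper itself only says the proof is identical to the $n=1$ case of \cite[Proposition 2.2.15]{Baudin_Duality_between_perverse_sheaves_and_Cartier_crystals}, so all the content lies in that second arrow.)

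The gap is exactly at the point you flag as ``the only genuine point'' and then do not resolve. Your plan rests on the claim that every object of $\QCoh_{W_nX}^{F^r}$ embeds into a \emph{quasi--coherent} Frobenius module that is injective in $\Mod(W_n\cO_X[F^r])$, constructed by coinduction from an injective quasi--coherent $W_n\cO_X$--module $\Ical_0$. But the coinduction along the forgetful functor is $\sHom_{W_n\cO_X}(W_n\cO_X[F^r],\Ical_0)\cong\prod_{e\ge 0}F^{re}_*F^{re,\flat}\Ical_0$, an \emph{infinite product}, and infinite products of quasi--coherent sheaves are not quasi--coherent on a non--affine scheme; semi--separatedness does not repair this, and ``reduce to an affine cover'' is not an argument here because the failure of quasi--coherence of a product is precisely a failure of affine--local descriptions to glue. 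So the coinduced object leaves the subcategory you need it to land in, and the direct--summand trick never gets off the ground. It is instructive to compare with the Cartier side: there the forgetful functor to $W_n\cO_X$--modules has an \emph{exact} left adjoint given by the direct sum $\bigoplus_e F^{re}_*(-)$, which does preserve quasi--coherence, and this is why injectives survive (\autoref{injectives_everywhere}); on the Frobenius side the left adjoint $\bigoplus_e F^{re,*}(-)$ is not exact (Frobenius is not flat on singular schemes) and the right adjoint is the problematic product, so the strategy does not transfer.

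A further warning sign: semi--separatedness plays no role anywhere in your injectivity argument, yet it is a hypothesis of the lemma. That strongly suggests the intended comparison of $D^+(\QCoh_{W_nX}^{F^r})$ with $D^+_{\qcoh}(W_n\cO_X[F^r])$ is not run through preservation of injectives at all, but through a mechanism that actually uses a finite affine cover with affine intersections --- e.g.\ a \v{C}ech/coherator argument in the style of B\"okstedt--Neeman, or a reduction of $\Ext$--groups of Frobenius modules to $\Ext$--groups of the underlying $W_n\cO_X$--modules via the two--term resolution of $W_n\cO_X[F^r]$. As written, your proposal does not contain a proof of the lemma's essential content.
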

	\begin{proof}
		The proof is identical to that of \cite[Proposition 2.2.15]{Baudin_Duality_between_perverse_sheaves_and_Cartier_crystals}.
	\end{proof}
	
	\section{The theory of Witt--Cartier modules}
	
	Here we develop the theory of $W_n$--Cartier modules, and prove important properties that they inherit from usual Cartier modules. Throughout this section, we fix a Noetherian $F$--finite $\bF_p$--scheme $X$.
	
	\subsection{Basic definitions}
	\begin{defn}\label{def Cart module}
		\begin{itemize}
			\item A \emph{$W_n$--$r$--Cartier module} (or simply \emph{$W_n$--Cartier module}) is a pair $(\cM, \kappa_{\cM}$) where $\Mcal$ is a $W_n\Ocal_X$-module, and $\kappa_{\cM} \colon F^r_*\cM \to \cM$ is a $W_n\cO_X$--linear morphism, called the \emph{structural morphism} of $\cM$. A morphism of $W_n$--Cartier modules is a morphism of underlying $W_n\cO_X$--modules commuting with the respective Cartier module structures. We denote the category of Cartier modules by $\Mod(W_n\Ocal_X[F^r]^{op})$ or $\Mod(W_n\Ocal_X[C^r])$. 
			\item The full subcategory of quasi-coherent (resp. coherent, ind--coherent) Cartier modules is denoted by $\QCoh_{W_nX}^{C^r}$ (resp. $\Coh_{W_nX}^{C^r}$, $\IndCoh_{W_nX}^{C^r}$).
			\item Since $F \colon W_nX \to W_nX$ is finite, given any quasi--coherent $W_n$--Cartier module $(\cM, \kappa_{\cM})$, there is an adjoint morphism $\kappa_{\cM}^{\flat} \colon \cM \to F^{r, \flat}\cM$. This morphism is called the \emph{adjoint structural morphism}.
			\item A quasi--coherent $W_n$--Cartier module is said to be \emph{unit} if its adjoint structural morphism is an isomorphism. The full subcategory of unit (resp. unit and ind--coherent) $W_n$--Cartier module is denoted $\QCoh_{W_nX}^{C^r, \unit}$ (resp. $\IndCoh_{W_nX}^{C^r, \unit}$).
			\item As in the case $n = 1$, although the forgetful functors $\QCoh_{W_nX}^{C^r, \unit} \to \QCoh_{W_nX}^{C^r}$ and $\IndCoh_{W_nX}^{C^r, \unit} \to \IndCoh_{W_nX}^{C^r}$ are left--exact, they may not be exact. The issue is that the cokernel of a morphism of unit $W_n$--Cartier modules may not be unit, since $F^{r, \flat}$ is only left--exact.
		\end{itemize}
	\end{defn}
	
	\begin{rem}\label{rem def Cartier module}
		\begin{enumerate}
			\item Unless stated otherwise, the structural morphism of a $W_n$--Cartier module $\cM$ will always be denoted $\kappa_{\cM}$, and its adjoint structural morphism will always be denoted $\kappa_{\cM}^{\flat}$.
			\item As in the case $n = 1$, we can take pushforwards of $W_n$--Cartier modules. When we work with quasi--coherent ones, we can also apply the functor $f^{\flat}$ for a finite morphism $f$, and this gives a right adjoint to the pushforward.
		\end{enumerate}
	\end{rem}
	\begin{lem}\label{all cats of Cartier mods are Grothendieck}
		Let $\cC$ be one of the following categories:  \begin{equation*}
			\Mod(W_n\cO_X[C^r]), \QCoh_{W_nX}^{C^r}, \IndCoh_{W_nX}^{C^r}, \QCoh_{W_nX}^{C^r, \unit}, \IndCoh_{W_nX}^{C^r, \unit}.
		\end{equation*}
		Then $\cC$ is a Grothendieck category. In particular, it has enough injectives.
	\end{lem}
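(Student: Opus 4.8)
The plan is to split the five categories into two groups: the three ``ordinary'' categories $\Mod(W_n\cO_X[C^r])$, $\QCoh_{W_nX}^{C^r}$ and $\IndCoh_{W_nX}^{C^r}$, for which I would transport the Grothendieck property along a forgetful functor from the cases already settled in \autoref{all cats of F-mods are Grothendieck}; and the two unit categories, which need more care because their forgetful functors are only left exact.

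For the ordinary cases, I would use that the forgetful functor $U \colon \Mod(W_n\cO_X[C^r]) \to \Mod(W_n\cO_X)$ is exact, faithful and creates all (co)limits, and that it admits a left adjoint sending $\cG$ to the free Cartier module $\bigoplus_{e \geq 0} F^{re}_*\cG$, where $\kappa$ acts by the shift sending the $e$-th summand of $F^r_*\bigl(\bigoplus_e F^{re}_*\cG\bigr) = \bigoplus_{e \geq 1} F^{re}_*\cG$ into the $e$-th summand of the target. From this, AB5 and the existence of a generating set transfer from $\Mod(W_n\cO_X)$ — Grothendieck by \autoref{all cats of F-mods are Grothendieck} — to $\Mod(W_n\cO_X[C^r])$: filtered colimits are computed after applying the exact conservative functor $U$, and if $\{\cG_i\}$ generates $\Mod(W_n\cO_X)$ then the free modules $\{\bigoplus_e F^{re}_*\cG_i\}$ generate. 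The same argument runs over $\QCoh_{W_nX}$ in place of $\Mod(W_n\cO_X)$ once one notes that $\bigoplus_e F^{re}_*\cG$ is quasi-coherent when $\cG$ is (as $F$ is finite), giving $\QCoh_{W_nX}^{C^r}$. Finally $\IndCoh_{W_nX}^{C^r}$ is the category of filtered unions of coherent sub-Cartier-modules, i.e. $\mathrm{Ind}(\Coh_{W_nX}^{C^r})$ sitting inside $\QCoh_{W_nX}^{C^r}$ as a localizing subcategory, Grothendieck with the coherent objects as generators — exactly as for $\IndCoh_{W_nX}^{F^r}$ in \autoref{all cats of F-mods are Grothendieck}.

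The main obstacle is the unit case, precisely because (as recorded in \autoref{def Cart module}) the forgetful functor to $\QCoh_{W_nX}^{C^r}$ is only left exact: unit modules are closed under kernels but not under cokernels, so one cannot simply inherit the structure. My plan is to exhibit a unitalization functor and present the unit categories as reflective subcategories. Iterating the adjoint structural map gives $\cM^{!} \coloneqq \colim_e F^{er,\flat}\cM$, with transition maps $F^{er,\flat}\kappa_{\cM}^{\flat}$. The crucial input is that $F$ is finite and $X$ is Noetherian, so $F^{r,\flat}$, being $\sHom$ out of the coherent sheaf $F^r_*W_n\cO_X$, commutes with filtered colimits; this shows both that $\cM^{!}$ is again unit (its structural map becomes the identity after identifying $F^{r,\flat}\cM^{!}$ with $\colim_e F^{(e+1)r,\flat}\cM = \cM^{!}$) and that $(-)^{!}$ preserves quasi-coherence and ind-coherence (as $F^{r,\flat}$ preserves coherence). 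A direct check of the universal property then shows $(-)^{!}$ is left adjoint to the inclusion of unit modules, so $\QCoh_{W_nX}^{C^r,\unit}$ and $\IndCoh_{W_nX}^{C^r,\unit}$ are reflective subcategories closed under kernels and under filtered colimits.

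From here I would conclude as follows: kernels in the unit category agree with those in the ambient category, cokernels are computed by applying $(-)^{!}$ to the ambient cokernel, and filtered colimits agree with the ambient ones; together these make the unit category AB5, and applying $(-)^{!}$ to a generating set of the ambient category produces a generating set. The delicate point — and where I expect the real work, mirroring the $n=1$ argument of \cite{Baudin_Duality_between_perverse_sheaves_and_Cartier_crystals} — is checking that with these kernels and unitalized cokernels the canonical map from coimage to image is an isomorphism, i.e. that the unit category is genuinely abelian; this is exactly the content that the failure of exactness of the forgetful functor makes nontrivial.
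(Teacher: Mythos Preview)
Your approach is correct and matches the paper's. The paper's own proof is the single line ``The proof is identical to that of \cite[Lemma 2.2.10]{Baudin_Duality_between_perverse_sheaves_and_Cartier_crystals}'', i.e.\ a deferral to the $n=1$ case, and what you have written is a faithful reconstruction of that argument: free/forgetful adjunction for the ordinary categories, and unitalization $(-)^u = \colim_e F^{er,\flat}(-)$ (your $(-)^!$) exhibiting the unit categories as reflective subcategories closed under filtered colimits. Your identification of the one genuinely nontrivial step --- verifying abelianness of the unit category despite the forgetful functor being only left exact --- is exactly the point that carries content in the referenced proof.
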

	\begin{proof}
		The proof is identical to that of \cite[Lemma 2.2.10]{Baudin_Duality_between_perverse_sheaves_and_Cartier_crystals}.
	\end{proof}
	
	\begin{lem}\label{D(QCoh) = D_qcoh(O_X[C])}
		For $* \in \{+, b\}$, the natural functor $D^*(\QCoh_{W_nX}^{C^r}) \ra D^*_{\qcoh}(W_n\Ocal_X[C^r])$ is an equivalence of categories.
	\end{lem}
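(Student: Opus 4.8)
The plan is to deduce this from the standard comparison between the derived category of a well-behaved abelian subcategory and the subcategory of complexes whose cohomology lies in it, applied to $\QCoh_{W_nX}^{C^r} \inc \Mod(W_n\cO_X[C^r])$; in fact the argument is formally identical to the case $n=1$ of \cite{Baudin_Duality_between_perverse_sheaves_and_Cartier_crystals}, the only inputs being that $W_nX$ is Noetherian and that $F^r$ is a finite endomorphism of it. First I would check that $\QCoh_{W_nX}^{C^r}$ is a weak Serre subcategory of $\Mod(W_n\cO_X[C^r])$: since $W_nX$ is Noetherian, $\QCoh_{W_nX}$ is a weak Serre subcategory of $\Mod(W_n\cO_X)$, and kernels, cokernels and extensions of quasi--coherent Cartier modules are computed on underlying $W_n\cO_X$--modules and inherit the structural morphism $\kappa$ automatically; so the property persists after adjoining $C^r$. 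This makes $D^*_{\qcoh}(W_n\cO_X[C^r]) \coloneqq D^*_{\QCoh_{W_nX}^{C^r}}(\Mod(W_n\cO_X[C^r]))$ a well--defined triangulated subcategory.

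Next I would invoke the criterion for weak Serre subcategories (see e.g. \stacksproj{0FCL} and the corresponding step in \cite{Baudin_Duality_between_perverse_sheaves_and_Cartier_crystals}): if $\cA$ is a weak Serre subcategory of $\cB$ and every object of $\cA$ admits a monomorphism into an object of $\cA$ that is injective in $\cB$, then $D^+(\cA) \to D^+_{\cA}(\cB)$ is an equivalence; the $D^b$ case follows from the $D^+$ case by a standard truncation argument, since no finiteness is imposed and boundedness is preserved on both sides. So everything reduces to producing, for each $\cM \in \QCoh_{W_nX}^{C^r}$, a monomorphism into a quasi--coherent $W_n$--Cartier module injective in $\Mod(W_n\cO_X[C^r])$. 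For this, let $U \colon \Mod(W_n\cO_X[C^r]) \to \Mod(W_n\cO_X)$ be the exact, faithful forgetful functor and $U^{\dagger}$ its right adjoint (coinduction); concretely $U^{\dagger}\cI \cong \prod_{k \geq 0} F^{rk, \flat}\cI$, with structural morphism the tautological shift $\prod_{k \geq 0} F^{rk,\flat}\cI \to \prod_{k \geq 1} F^{rk,\flat}\cI$. Embedding $U\cM$ into an injective object $\cI$ of $\QCoh_{W_nX}$ (which has enough injectives, being Grothendieck) and using that $U^{\dagger}$ preserves injectivity (being right adjoint to the exact $U$, and $\cI$ is injective in $\Mod(W_n\cO_X)$ because $W_nX$ is Noetherian), the unit composed with $U^{\dagger}(\cM \inj \cI)$ gives a map $\cM \to U^{\dagger}\cI$ that becomes split mono after applying $U$, hence is a monomorphism into an object injective in $\Mod(W_n\cO_X[C^r])$.

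The main obstacle is the single point that $U^{\dagger}\cI$ is \emph{quasi--coherent}: the coinduction is an infinite product, and $\QCoh_{W_nX}$ is not closed under infinite products on a general Noetherian scheme. This is exactly where $F$--finiteness is used. Each factor $F^{rk, \flat}\cI \cong \sHom_{W_n\cO_X}(F^{rk}_*W_n\cO_X, \cI)$ is quasi--coherent, being an internal Hom out of a coherent module into a quasi--coherent one on the Noetherian scheme $W_nX$, and it is injective in $\Mod(W_n\cO_X)$ since $F^{rk,\flat}$ is right adjoint to the exact $F^{rk}_*$. Thus $U^{\dagger}\cI$ is a product of injective $W_n\cO_X$--modules, hence itself injective in $\Mod(W_n\cO_X)$; invoking the structure of injective modules on the Noetherian scheme $W_nX$ (exactly as in the case $n=1$) one concludes that it is quasi--coherent, so $U^{\dagger}\cI \in \QCoh_{W_nX}^{C^r}$. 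Once this point is settled the criterion applies verbatim and yields both the $D^+$ and the $D^b$ equivalences; the same mechanism, with $\QCrys_{W_nX}^{F^r}$ in place of $\QCoh_{W_nX}$, is what one would use for the crystal variants.
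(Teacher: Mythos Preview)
Your overall strategy matches the paper's: both simply defer to the $n=1$ argument of \cite[Proposition~3.1.7]{Baudin_Duality_between_perverse_sheaves_and_Cartier_crystals}, and your reconstruction via the coinduction $U^{\dagger}$ is largely sound. There is, however, a genuine gap exactly at the point you flag as ``the main obstacle'': the object $U^{\dagger}\cI=\prod_{k\ge 0}F^{rk,\flat}\cI$ is in general \emph{not} quasi--coherent, so the sentence ``invoking the structure of injective modules on the Noetherian scheme $W_nX$ \ldots one concludes that it is quasi--coherent'' does not go through. Being injective in $\Mod(W_n\cO_X)$ does not imply quasi--coherence, even on an affine Noetherian scheme.

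For a concrete failure, take $R=k[x]$ (so $n=1$ suffices) and $\cI=\widetilde{E(R/(x))}$. Each $F^{rk,\flat}\cI$ is a finite direct sum of copies of $\widetilde{E(R/(x))}$, hence quasi--coherent injective with vanishing sections over $D(x)$; thus the sheaf product $\prod_k F^{rk,\flat}\cI$ has zero sections over $D(x)$. On the other hand the element $(x^{-k})_{k\ge 0}$ in the module product $\prod_k F^{rk,\flat}I$ is not killed by any power of $x$, so $\bigl(\prod_k F^{rk,\flat}I\bigr)_x\neq 0$. Hence the sheaf product is not the sheaf associated to the module product and is not quasi--coherent. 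Your $U^{\dagger}\cI$ is therefore injective in $\Mod(W_n\cO_X[C^r])$ but lies outside $\QCoh_{W_nX}^{C^r}$, so the hypothesis of \stacksproj{0FCL} is not verified by this construction.

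The way the argument in \cite{Baudin_Duality_between_perverse_sheaves_and_Cartier_crystals} actually closes is from the other side: one takes $\cI$ to be an injective object of $\QCoh_{W_nX}^{C^r}$ itself (these exist since the category is Grothendieck) and shows that such $\cI$ is already injective in $\Mod(W_n\cO_X[C^r])$; then \stacksproj{0FCL} applies directly. Compare Proposition~3.1.5 of \emph{loc.~cit.} and \autoref{injectives_everywhere} here. Your coinduction produces injectives in the big category but not in the intersection, which is why the argument stalls. (The final aside about ``$\QCrys_{W_nX}^{F^r}$'' also looks misplaced: the present lemma concerns neither crystals nor Frobenius modules.)
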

	\begin{proof}
		The proof is identical to that of \cite[Proposition 3.1.7]{Baudin_Duality_between_perverse_sheaves_and_Cartier_crystals}.
	\end{proof}

	\subsection{Ind-coherent Cartier modules}
	In the duality proven in \cite{Baudin_Duality_between_perverse_sheaves_and_Cartier_crystals}, two crucial facts about ind--coherent Cartier modules were needed:
	\begin{itemize}
		\item an injective object in $\IndCoh_X^{C^r}$ is also injective in $\Mod(\cO_X)$;
		\item the natural functor $D^b(\Coh_X^{C^r}) \to D^b_{\coh}(\QCoh_X^{C^r})$ is an equivalence of categories.
	\end{itemize}
	
	The goal here is to prove the $W_n$--analogue of these results. The proof of both facts will be by restricting to the case $n = 1$.

	\begin{lem}\label{ind_coh_stable_under_everything}
		Ind--coherence is stable by sub--objects, quotients, extension and colimits.
	\end{lem}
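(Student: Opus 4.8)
The plan is to verify the four closure properties directly, inside the ambient category $\QCoh_{W_nX}^{C^r}$, using only that $W_nX$ is a Noetherian scheme and that $F \colon W_nX \to W_nX$ is finite. Throughout I would use that a quasi--coherent $W_n$--Cartier module is ind--coherent exactly when it is the filtered union of its coherent $W_n$--Cartier submodules, together with the standard facts on the Noetherian scheme $W_nX$: a quasi--coherent subsheaf, quotient, or image of a coherent sheaf is coherent, the pushforward of a coherent sheaf along the finite map $F^r$ is coherent, and filtered colimits are exact and commute with finite intersections.

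For sub--objects, given a Cartier submodule $\cN \inc \cM$ with $\cM = \colim_i \cM_i$ a filtered union of coherent Cartier submodules, I would write $\cN = \colim_i(\cN \cap \cM_i)$; each $\cN \cap \cM_i$ is Cartier--stable (as $F^r_*$ is exact and $\kappa_{\cM}$ respects both $\cN$ and $\cM_i$) and coherent, being a subsheaf of the coherent $\cM_i$. For quotients $\pi \colon \cM \surj \cQ$ I would write $\cQ = \colim_i \pi(\cM_i)$; each $\pi(\cM_i)$ is a Cartier submodule of $\cQ$ (since $\pi$ is a Cartier morphism) and coherent as a quotient of $\cM_i$. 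For arbitrary colimits, I would reduce to a cokernel of a map between two coproducts, so it suffices to treat coproducts and cokernels; cokernels are quotients, and for $\bigoplus_\alpha \cM_\alpha$ every section is supported on finitely many indices and lies, in each coordinate, in a coherent Cartier submodule, so the finite direct sum of these is coherent and Cartier--stable.

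The main work is stability under extensions, and here the Cartier structure genuinely intervenes: unlike for plain quasi--coherent $W_n\cO_X$--modules, the Cartier submodule generated by finitely many sections of a non--coherent Cartier module need not be coherent, since $\sum_{e \geq 0}\kappa^e\big(F^{re}_*(-)\big)$ may fail to stabilise, so I cannot simply ``generate'' the submodules I need. Given $0 \to \cM' \to \cM \xrightarrow{\pi} \cM'' \to 0$ with $\cM', \cM''$ ind--coherent, I would first reduce to $\cM''$ coherent by writing $\cM'' = \bigcup_j \cN_j''$ and $\cM = \bigcup_j \pi^{-1}(\cN_j'')$ and using that a union of ind--coherent submodules is ind--coherent. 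Assuming $\cM''$ coherent, I would choose a coherent $W_n\cO_X$--submodule $\cR_0 \inc \cM$ with $\pi(\cR_0) = \cM''$ (possible as $\cM''$ is a Noetherian object). The key step is that $\kappa_{\cM}(\cR_0) = \im(F^r_*\cR_0 \to \cM)$ is coherent, which is exactly where finiteness of $F$ enters, since then $F^r_*\cR_0$ is coherent. As $\pi(\kappa_{\cM}(\cR_0)) \inc \pi(\cR_0)$ we get $\kappa_{\cM}(\cR_0) \inc \cR_0 + \cM'$, and because $\kappa_{\cM}(\cR_0)$ is coherent while $\cM' = \bigcup_j \cM_j'$ is filtered, it already lies in $\cR_0 + \cM_{j_0}'$ for some $j_0$. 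Then $\cQ \coloneqq \cR_0 + \cM_{j_0}'$ is coherent and Cartier--stable, $\pi(\cQ) = \cM''$ forces $\cM = \cQ + \cM'$, and so $\cM = \bigcup_j(\cQ + \cM_j')$ exhibits $\cM$ as a filtered union of coherent Cartier submodules.

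The hard part is precisely this absorption trick: I expect the obstacle to be that one cannot construct the required coherent Cartier submodules by iterating $\kappa_{\cM}$, so instead one controls the single image $\kappa_{\cM}(\cR_0)$ through finiteness of Frobenius and then swallows the resulting error term into an ambient coherent Cartier submodule $\cM_{j_0}'$ of the ind--coherent kernel $\cM'$. Everything else is formal manipulation of filtered colimits in a Grothendieck category.
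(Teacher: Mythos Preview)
Your proof is correct and follows the same approach as the paper. The paper dismisses sub--objects, quotients and colimits as ``immediate from the definitions'' and for extensions cites \cite[Proposition 2.0.8]{Blickle_Bockle_Cartier_crystals}; your extension argument --- lifting $\cM''$ to a coherent $W_n\cO_X$--submodule $\cR_0$, using finiteness of $F$ to keep $\kappa_{\cM}(F^r_*\cR_0)$ coherent, and then absorbing it into $\cR_0 + \cM'_{j_0}$ --- is exactly the argument of that reference, transported verbatim to the $W_n$--setting.
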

	\begin{proof}
		Stability by sub--objects, quotients and colimits are immediate from the definitions. To obtain stability under extensions, apply the same proof as in \cite[Proposition 2.0.8]{Blickle_Bockle_Cartier_crystals}.
	\end{proof}

	Let $\cM$ be a $W_n$--Cartier module. Note that since $F(p) = p$, both $p\cM$ and $\cM[p]$ (i.e. the $p$--torsion submodule of $\cM$) are $W_n$--Cartier submodule of $\cM$.

	\begin{lem}[{\cite[Proposition 9.5.1]{Bhatt_Lurie_RH_corr_pos_char}}]\label{indcoh_iff_indcoh_mod_p}
		Let $\cM \in \QCoh_{W_nX}^{C^r}$. Then the following statements are equivalent:
		\begin{enumerate}
			\item $\cM$ is ind--coherent;
			\item $\cM/p$ is ind--coherent;
			\item $\cM[p]$ is ind--coherent.
		\end{enumerate}
	\end{lem}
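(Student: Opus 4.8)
The plan is to exploit two soft facts: that multiplication by $p$ is an endomorphism of $W_n$--Cartier modules (it is $W_n\cO_X$--linear and hence commutes with $\kappa_{\cM}$, so $\cM[p]$, $p\cM$ and $\cM/p$ are genuine $W_n$--Cartier submodules/quotients), and that $p$ acts nilpotently on $W_n\cO_X$. Recall that over an $\bF_p$--algebra one has $p^n = 0$ in $W_n\cO_X$ (standard Witt vector theory: $p$ lies in the image of the Verschiebung, and the $V$--adic filtration has length $n$). Consequently every $\cM \in \QCoh_{W_nX}^{C^r}$ satisfies $p^n\cM = 0$ and $\cM = \cM[p^n]$, so that the $p$--power filtrations used below are finite and consist of $W_n$--Cartier submodules. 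With this in hand, the whole statement reduces to the closure properties recorded in \autoref{ind_coh_stable_under_everything}.

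First I would dispose of the two easy implications. Since $\cM/p$ is a quotient of $\cM$ and $\cM[p]$ is a sub--object of $\cM$, stability of ind--coherence under quotients and under sub--objects (\autoref{ind_coh_stable_under_everything}) gives at once $(1) \Rightarrow (2)$ and $(1) \Rightarrow (3)$.

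For $(3) \Rightarrow (1)$, I would use the finite filtration $\cM[p] \subseteq \cM[p^2] \subseteq \dots \subseteq \cM[p^n] = \cM$. For each $i$, multiplication by $p^i$ defines a morphism of $W_n$--Cartier modules $\cM[p^{i+1}] \to \cM[p]$ whose kernel is exactly $\cM[p^i]$; hence each graded piece $\cM[p^{i+1}]/\cM[p^i]$ embeds into $\cM[p]$ and is therefore ind--coherent by hypothesis $(3)$ and stability under sub--objects. An ascending induction on $i$, using stability under extensions applied to $0 \to \cM[p^i] \to \cM[p^{i+1}] \to \cM[p^{i+1}]/\cM[p^i] \to 0$, then shows that each $\cM[p^i]$, and in particular $\cM = \cM[p^n]$, is ind--coherent.

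The argument for $(2) \Rightarrow (1)$ is the dual one: I would take the finite filtration $0 = p^n\cM \subseteq p^{n-1}\cM \subseteq \dots \subseteq p\cM \subseteq \cM$ and observe that multiplication by $p^i$ induces a surjection $\cM/p \twoheadrightarrow p^i\cM/p^{i+1}\cM$, so each graded piece is ind--coherent by hypothesis $(2)$ and stability under quotients; a descending induction using stability under extensions then yields that each $p^i\cM$, and finally $\cM$ itself, is ind--coherent. The only non--formal ingredient in the whole proof is the nilpotence of $p$ ensuring the filtrations terminate; once that is granted the result is purely formal, so I do not expect any serious obstacle here.
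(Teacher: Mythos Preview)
Your proof is correct and uses the same toolkit as the paper (nilpotence of $p$ on $W_n\cO_X$ together with the closure properties of \autoref{ind_coh_stable_under_everything}), but your logical route is slightly different and arguably cleaner. The paper establishes the chain $(1)\Rightarrow(3)\Rightarrow(2)\Rightarrow(1)$: for $(3)\Rightarrow(2)$ it uses, for each $i$, the short exact sequence
\[
0 \to \frac{\cM[p]\cap p^i\cM}{\cM[p]\cap p^{i+1}\cM} \to \frac{p^i\cM}{p^{i+1}\cM} \xrightarrow{\,p\,} \frac{p^{i+1}\cM}{p^{i+2}\cM} \to 0
\]
and a descending induction, and then $(2)\Rightarrow(1)$ exactly as you do. You instead prove $(3)\Rightarrow(1)$ directly via the torsion filtration $\cM[p]\subseteq\cM[p^2]\subseteq\cdots\subseteq\cM[p^n]=\cM$ and the embedding $\cM[p^{i+1}]/\cM[p^i]\hookrightarrow\cM[p]$ given by $p^i$. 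This avoids the auxiliary intersection $\cM[p]\cap p^i\cM$ and the detour through $(2)$; the paper's version, on the other hand, makes the implication $(3)\Rightarrow(2)$ explicit, which is occasionally useful on its own. Either way the argument is purely formal once $p^n\cM=0$ is noted.
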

	\begin{proof}
		Throughout, we will use \autoref{ind_coh_stable_under_everything} without further mention. If $\cM$ is ind--coherent, then surely both $\cM/p$ and $\cM[p]$ are. Now, assume that $\cM[p]$ is ind--coherent, and let us show that $\cM/p$ is also ind--coherent. For any $i \geq 0$, we have an exact sequence \[ 0 \to \quot{\cM[p] \cap p^{i}\cM}{\cM[p] \cap p^{i + 1}\cM} \to \quot{p^{i}\cM}{p^{i+1}\cM} \xto{p} \quot{p^{i+1}\cM}{p^{i+2}\cM}\to 0. \]
		The left term is always ind--coherent by assumption, and the right term is ind--coherent for $i \geq n - 1$ (it is zero). Thus, we deduce by descending induction that the middle term is ind--coherent for all $i \geq 0$. In particular, $\cM/p$ is ind--coherent.
		
		Finally, assume that $\cM/p$ is ind--coherent, and let us show that $\cM$ ind--coherent. Given $i \geq 0$, we have an exact sequence \[ 0 \to p^{i + 1}\cM \to p^i\cM \to \quot{p^i\cM}{p^{i + 1}\cM} \to 0. \]
		The right term is a quotient of $\cM/p$, so it is ind--coherent by assumption. Since the left term is also ind--coherent for $i \geq n$ (it is zero), we deduce by descending induction that the middle term is also ind--coherent. In particular, $\cM$ is ind--coherent.
	\end{proof}

%

	\begin{notation}
		From now on, we will denote by $X'$ be Noetherian $F$--finite $\bF_p$--scheme $(X, W_n\cO_X/p)$.
	\end{notation}
	
	\begin{prop}\label{inj_ind_coh_is_inj_qcoh}
		An injective object in $\IndCoh_{W_nX}^{C^r}$ is also injective in $\Mod(W_n\cO_X)$.
	\end{prop}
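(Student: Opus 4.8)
The plan is to reduce everything to the case $n=1$ already available in \cite{Baudin_Duality_between_perverse_sheaves_and_Cartier_crystals}, applied to the $\bF_p$--scheme $X'=(X,W_n\cO_X/p)$, and then to propagate injectivity back up the $p$--adic filtration. Throughout write $R\coloneqq W_n\cO_X$ and $\bar R\coloneqq W_n\cO_X/p=\cO_{X'}$, and recall that $R$ is a $\bZ/p^n\bZ$--algebra, so that $p^nM=0$ for every $R$--module $M$.

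\textbf{Step 1 (descent to $X'$).} Restriction of scalars along $R\to\bar R$ defines an exact functor $\iota\colon\IndCoh_{X'}^{C^r}\to\IndCoh_{W_nX}^{C^r}$: a $\bar R$--Cartier module killed by $p$ is an $R$--Cartier module, and it remains ind--coherent by \autoref{indcoh_iff_indcoh_mod_p}. Its right adjoint is $(-)[p]$; note that $I[p]$ is a Cartier submodule of $I$ (since $F^r(p)=p$), is killed by $p$, and is ind--coherent over $X'$ again by \autoref{indcoh_iff_indcoh_mod_p}, so $(-)[p]$ really lands in $\IndCoh_{X'}^{C^r}$. As a right adjoint to an exact functor it preserves injectives, so $I[p]$ is injective in $\IndCoh_{X'}^{C^r}$. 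Since $X'$ is a Noetherian $F$--finite $\bF_p$--scheme, the $n=1$ case of the proposition (the corresponding result in \cite{Baudin_Duality_between_perverse_sheaves_and_Cartier_crystals}) applies to $X'$ and gives that $I[p]$ is injective in $\Mod(\bar R)$.

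\textbf{Step 2 (homological dévissage).} To conclude I would show $\Ext^1_R(M,I)=0$ for all $R$--modules $M$. As $p^n=0$, every $M$ carries the finite filtration $M[p]\subseteq\cdots\subseteq M[p^n]=M$ with successive quotients killed by $p$, i.e. $\bar R$--modules; the long exact sequence then reduces the vanishing to $\Ext^1_R(Q,I)=0$ for $Q$ a $\bar R$--module. A change--of--rings argument along $R\to\bar R$, using that $I[p]=\HHom_R(\bar R,I)$ is $\bar R$--injective by Step 1, collapses this to the single sheaf--level vanishing $\EExt^1_R(\bar R,I)=0$.

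\textbf{Step 3 (the obstacle).} Resolving $\bar R$ by $0\to pR\to R\to\bar R\to 0$ and using $pR\cong R/\Ann_R(p)$ identifies $\EExt^1_R(\bar R,I)$ with $\coker\big(p\colon I\to I[\Ann_R(p)]\big)$; thus the missing input is the $p$--divisibility $pI=I[\Ann_R(p)]$. This is exactly the feature of injectivity that $I[p]$ cannot see — already $\bF_p$ is injective over $\bF_p$ but not over $\bZ/p^n\bZ$ — and it is the genuinely new point relative to $n=1$, where $p=0$ makes it automatic; I expect it to be the crux of the proof. Both $pI$ and $I[\Ann_R(p)]$ are ind--coherent Cartier submodules of $I$ (one checks $F^r(p)=p$ and $F^r(\Ann_R(p))\subseteq\Ann_R(p)$, and invokes \autoref{ind_coh_stable_under_everything} and \autoref{indcoh_iff_indcoh_mod_p}), so their quotient $Q\coloneqq I[\Ann_R(p)]/pI$ is a $p$--torsion object of $\IndCoh_{X'}^{C^r}$ and the task is to prove $Q=0$. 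The plan here is to feed the full injectivity of $I$ in $\IndCoh_{W_nX}^{C^r}$ (and not just that of $I[p]$ over $X'$) into the Witt relation $p=VF$, which routes multiplication by $p$ through the Frobenius pushforward $F^r_*$ and hence ties the sought $p$--divisibility to the Cartier structure $\kappa_I$; this is where the argument must genuinely use Witt--vector geometry rather than the $n=1$ black box.
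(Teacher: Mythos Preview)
Your Steps~1 and~2 set up a reasonable reduction, but Step~3 is not a proof --- you identify the obstruction $Q=I[\Ann_R(p)]/pI$ and then stop at a plan. The difficulty you flag is genuine and, as you suspect, not resolvable by the tools you have lined up. Injectivity of $I$ in $\IndCoh_{W_nX}^{C^r}$ only splits extensions that live \emph{in that category}; but an element of $Q=\EExt^1_R(\bar R,I)$ corresponds to an extension $0\to I\to E\to\bar R\to 0$ in $\Mod(R)$, and there is no canonical Cartier operator on $\bar R$ (nor on $E$) making this a short exact sequence of ind--coherent Cartier modules compatible with the given one on $I$. The Witt identity $p=VF$ relates multiplication by $p$ on $W_n\cO_X$ to Frobenius and Verschiebung on $W_n\cO_X$ itself, not on an arbitrary module $I$, so it is unclear what leverage it gives you here. (There is also a smaller issue in Step~2: you pass silently from the global $\Ext^1$ needed for injectivity in $\Mod(W_n\cO_X)$ to the sheaf $\EExt^1$, which requires justification.)

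The paper avoids this obstruction by a completely different mechanism: instead of computing $\Ext$ groups, it realizes $I$ as a direct summand of a sheaf that is \emph{already} injective in $\Mod(W_n\cO_X)$. The reduction (inherited verbatim from the proof of the $n=1$ case in \cite{Baudin_Duality_between_perverse_sheaves_and_Cartier_crystals}) is: it suffices to show that for any $\cM\in\Coh_{W_nX}^{C^r}$, the injective hull $\cI$ of $\cM$ in $\QCoh_{W_nX}$, equipped with any Cartier structure $\theta$ extending $\kappa_{\cM}$, is ind--coherent. This is then proven by checking that $\cI[p]$ is the injective hull of $\cM[p]$ in $\QCoh_{X'}$ (a short essential--extension argument), applying the $n=1$ statement on $X'$ to conclude that $(\cI[p],\theta)$ is ind--coherent, and invoking \autoref{indcoh_iff_indcoh_mod_p} to lift this to $\cI$. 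The $p$--divisibility question never arises, because $\cI$ is injective in $\QCoh_{W_nX}$ (hence in $\Mod(W_n\cO_X)$) by construction.
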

	\begin{proof}
		By the same proof as in \cite[Proposition 3.2.8]{Baudin_Duality_between_perverse_sheaves_and_Cartier_crystals}, it is enough to show the following: for any $\cM \in \Coh_{W_nX}^{C^r}$, if $\cI$ is an injective hull of $\cM$ in $\QCoh_{W_nX}$ and $\theta \colon F^r_*\cI \to \cI$ is any morphism making $(\cM, \kappa_{\cM}) \inj (\cI, \theta)$ a morphism of $W_n$--Cartier modules, then $(\cI, \theta)$ is ind--coherent. \\
		
		Let $\cM$ and $(\cI, \theta)$ be as in the statement above. First of all, let us show that $\cI[p]$ is an injective hull of $\cM[p]$ in $\QCoh_{X'}$. 
		
		Since the functor taking $p$--torsion is a right adjoint to the (exact) forgetful functor $\QCoh_{X'} \to \QCoh_{W_nX}$, it preserves injectives. Hence it remains to show that $\cM[p] \inc \cI[p]$ is an essential extension. However, if $\cN \inc \cI[p]$ is a non-zero submodule, then since in particular $\cN \inc \cI$, we deduce that $\cN \cap \cM \neq 0$. Since $\cN$ is $p$--torsion, we deduce that $\cN \cap \cM[p] \neq 0$, so the inclusion $\cM \inc \cI$ is indeed an essential extension. Thus, we have proven that $\cI[p]$ is an injective hull of $\cM[p]$ in $\QCoh_{X'}$. \\
		
		By \cite[Proposition 3.2.8]{Baudin_Duality_between_perverse_sheaves_and_Cartier_crystals} applied to the $\bF_p$--scheme $X'$, we deduce that $(\cI[p], \theta)$ is ind--coherent. We then conclude by \autoref{indcoh_iff_indcoh_mod_p} that also $(\cI, \theta)$ is ind--coherent.
	\end{proof}
	
	\begin{cor}\label{injectives_everywhere}
		Let $\cI$ be an injective object in either $\Mod(W_n\cO_X[C^r])$, $\QCoh_{W_nX}^{C^r}$, $\QCoh_{W_nX}^{C^r, \unit}$, $\IndCoh_{W_nX}^{C^r}$ or $\IndCoh_{W_nX}^{C^r, \unit}$. Then $\cI$ is also injective in $\Mod(W_n\cO_X)$.
	\end{cor}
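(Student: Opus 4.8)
The plan is to show, for each of the five categories $\cC$, that the functor forgetting the Cartier structure down to $\Mod(W_n\cO_X)$ preserves injective objects; when $\cC$ consists of quasi--coherent modules I would first land in $\QCoh_{W_nX}$ and then use that $W_nX$ is Noetherian, so that an injective quasi--coherent sheaf is automatically injective in $\Mod(W_n\cO_X)$. The only input beyond this is \autoref{inj_ind_coh_is_inj_qcoh}, which settles the case $\cC = \IndCoh_{W_nX}^{C^r}$ outright, together with the standard observation that a functor admitting an exact left adjoint preserves injectives (in fact it is enough that the left adjoint preserve monomorphisms).

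For the non--unit cases $\cC = \Mod(W_n\cO_X[C^r])$ and $\cC = \QCoh_{W_nX}^{C^r}$ I would exhibit the required exact left adjoint explicitly, namely the \emph{free} Cartier module
\[ \mathrm{free}(\cM) = \bigoplus_{i \geq 0} F^{ir}_*\cM, \qquad F^r_*\mathrm{free}(\cM) = \bigoplus_{i \geq 1} F^{ir}_*\cM \hookrightarrow \mathrm{free}(\cM) \]
with structural morphism the evident shift. A short adjunction computation shows $\mathrm{free}$ is left adjoint to the forgetful functor, and since $F$ is finite each $F^{ir}_*$ is exact and preserves quasi--coherence while arbitrary direct sums are exact, the functor $\mathrm{free}$ is exact. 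Hence an injective of $\Mod(W_n\cO_X[C^r])$ is injective in $\Mod(W_n\cO_X)$, and an injective of $\QCoh_{W_nX}^{C^r}$ is injective in $\QCoh_{W_nX}$ and then, by the Noetherian fact above, in $\Mod(W_n\cO_X)$.

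For the two unit cases I would treat the fully faithful inclusions $\QCoh_{W_nX}^{C^r,\unit} \hookrightarrow \QCoh_{W_nX}^{C^r}$ and $\IndCoh_{W_nX}^{C^r,\unit} \hookrightarrow \IndCoh_{W_nX}^{C^r}$, whose left adjoint is the \emph{unitalization}
\[ \cM \longmapsto \colim_e F^{er,\flat}\cM, \]
with transition maps induced by the adjoint structural morphism $\kappa_{\cM}^{\flat}$. Being a left adjoint this functor is right exact, and it is left exact since it is a filtered colimit of the left exact functors $F^{er,\flat}$; therefore it is exact, so both inclusions preserve injectives. Chaining with the cases already settled gives the conclusion: an injective of $\QCoh_{W_nX}^{C^r,\unit}$ is injective in $\QCoh_{W_nX}^{C^r}$ and hence in $\Mod(W_n\cO_X)$, and an injective of $\IndCoh_{W_nX}^{C^r,\unit}$ is injective in $\IndCoh_{W_nX}^{C^r}$ and hence, by \autoref{inj_ind_coh_is_inj_qcoh}, in $\Mod(W_n\cO_X)$.

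The step I expect to require genuine care — and essentially the only non--formal point — is that unitalization is well defined as an endofunctor of the ind--coherent category, i.e. that $F^{r,\flat}$ preserves ind--coherence. Working affine--locally, an element of $F^{r,\flat}\cM$ is a homomorphism $F^r_*W_n\cO_X \to \cM$; as $F^r_*W_n\cO_X$ is coherent its image is finitely generated, hence contained in a coherent Cartier submodule $\cM' \subseteq \cM$, so the element already lies in the coherent submodule $F^{r,\flat}\cM' \subseteq F^{r,\flat}\cM$. Thus $F^{r,\flat}\cM$ is ind--coherent, and by \autoref{ind_coh_stable_under_everything} so is the filtered colimit $\colim_e F^{er,\flat}\cM$, which is moreover visibly unit. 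Once this is in place, all five cases reduce to the two facts quoted at the start.
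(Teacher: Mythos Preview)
Your proposal is correct and follows essentially the same route as the paper: reduce the unit cases to the non--unit ones via the unitalization left adjoint, handle $\Mod(W_n\cO_X[C^r])$ and $\QCoh_{W_nX}^{C^r}$ via the free Cartier module left adjoint, and invoke \autoref{inj_ind_coh_is_inj_qcoh} for $\IndCoh_{W_nX}^{C^r}$. The paper simply cites the corresponding arguments from the $n=1$ case, whereas you have spelled them out; your final paragraph correctly identifies and dispatches the one non--formal verification (that $F^{r,\flat}$ preserves ind--coherence), and your parenthetical remark that it suffices for the left adjoint to preserve monomorphisms neatly sidesteps any worry about the non--standard cokernels in the unit category.
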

	\begin{proof}
		By the same unitalization argument as in \cite[Corollary 3.4.9]{Baudin_Duality_between_perverse_sheaves_and_Cartier_crystals}, it is enough to show the result for $\Mod(W_n\cO_X[C^r])$, $\QCoh_{W_nX}^{C^r}$ and  $\IndCoh_{W_nX}^{C^r}$. For $\Mod(W_n\cO_X[C^r])$ and $\QCoh_{W_nX}^{C^r}$, apply exactly the same proof as in \cite[Proposition 3.1.5]{Baudin_Duality_between_perverse_sheaves_and_Cartier_crystals}. For $\IndCoh_{W_nX}^{C^r}$, this is exactly \autoref{inj_ind_coh_is_inj_qcoh}.
	\end{proof}

	Let us now go to our second goal. As already stated, we will again reduce to the case when $n = 1$ (something that we do not know how to do for $W_n$--Frobenius modules).
	
	\begin{defn}
		Define the functor $\ind_{W_nX} \colon \QCoh_{W_nX}^{C^r} \ra \IndCoh_{W_nX}^{C^r}$ by sending a $W_n$--Cartier module to its maximal ind--coherent $W_n$--Cartier submodule. This defines a functor, which is right adjoint to the inclusion $G \colon \IndCoh_{W_nX}^{C^r} \ra \QCoh_{W_nX}^{C^r}$. 
	\end{defn}

	\begin{lem}\label{first_commutativity}
		The following square commutes:
		\[ \begin{tikzcd}
			D^+(\QCoh_{W_nX}^{C^r}) \arrow[rr, "R\ind_{W_nX}"] \arrow[d, "i_{X'}^!"] &  & D^+(\IndCoh_{W_nX}^{C^r}) \arrow[d, "i_{X'}^!"] \\
			D^+(\QCoh_{X'}^{C^r}) \arrow[rr, "R\ind_{X'}"]                           &  & D^+(\IndCoh_{X'}^{C^r}).                       
		\end{tikzcd} \]
	\end{lem}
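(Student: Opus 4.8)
The plan is to reduce the commutativity of the square to a computation with the \emph{underived} functors, and then promote it to the derived level by the composition-of-derived-functors formalism. Since $X'$ is the closed subscheme of $W_nX$ cut out by the ideal $pW_n\cO_X$, the functor $i_{X'}^{\flat}$ is nothing but the $p$--torsion functor $\cM \mapsto \cM[p]$, with its induced structural morphism (this is a $W_n$--Cartier submodule because $F^r(p) = p$, as already noted before \autoref{indcoh_iff_indcoh_mod_p}). Before anything else I would record two elementary compatibilities of ind--coherence with $p$--torsion. First, by \autoref{ind_coh_stable_under_everything} the $p$--torsion submodule of an ind--coherent module is again ind--coherent. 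Second, for a $p$--torsion module the coherent $W_nX$--Cartier submodules coincide with the coherent $X'$--Cartier submodules (a $p$--torsion $W_n\cO_X$--module that is finitely generated is the same thing as a finitely generated $W_n\cO_X/p$--module), so for such modules ind--coherence over $W_nX$ and over $X'$ are literally the same notion. In particular $i_{X'}^{\flat}$ does send $\IndCoh_{W_nX}^{C^r}$ into $\IndCoh_{X'}^{C^r}$, so the right-hand vertical arrow is well defined.

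With these in hand, the underived commutativity reads $(\ind_{W_nX}\cM)[p] = \ind_{X'}(\cM[p])$ for every $\cM \in \QCoh_{W_nX}^{C^r}$, both sides being viewed as $W_n$--Cartier submodules of $\cM[p]$. For the inclusion ``$\subseteq$'', the left-hand side is a submodule of the ind--coherent module $\ind_{W_nX}\cM$, hence ind--coherent over $W_nX$, and being $p$--torsion it is ind--coherent over $X'$; as a submodule of $\cM[p]$ it therefore sits inside the maximal such submodule $\ind_{X'}(\cM[p])$. Conversely, $\ind_{X'}(\cM[p])$ is $p$--torsion and ind--coherent over $X'$, hence ind--coherent over $W_nX$; as a submodule of $\cM$ it lies in $\ind_{W_nX}\cM$, and being $p$--torsion it lies in $(\ind_{W_nX}\cM)[p]$. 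This yields a natural isomorphism $i_{X'}^{\flat} \circ \ind_{W_nX} \cong \ind_{X'} \circ i_{X'}^{\flat}$ of left--exact functors $\QCoh_{W_nX}^{C^r} \to \IndCoh_{X'}^{C^r}$.

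To pass to the derived functors I would apply the Grothendieck spectral sequence to each of the two composites; all the categories involved have enough injectives by \autoref{all cats of Cartier mods are Grothendieck} (applied to both $W_nX$ and $X'$), so the only thing to check is an acyclicity of injectives. For the top--then--right composite, $\ind_{W_nX}$ is right adjoint to the exact inclusion $\IndCoh_{W_nX}^{C^r} \inj \QCoh_{W_nX}^{C^r}$, hence carries injectives to injectives, and these are automatically $i_{X'}^{!}$--acyclic; thus $i_{X'}^{!} \circ R\ind_{W_nX} \cong R(i_{X'}^{\flat} \circ \ind_{W_nX})$. For the right--then--top composite, $i_{X'}^{\flat}$ is right adjoint to the exact pushforward $(i_{X'})_{*}\colon \QCoh_{X'}^{C^r} \to \QCoh_{W_nX}^{C^r}$ (see \autoref{rem def Cartier module}), hence also carries injectives to injectives, which are $\ind_{X'}$--acyclic; thus $R\ind_{X'} \circ i_{X'}^{!} \cong R(\ind_{X'} \circ i_{X'}^{\flat})$. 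Combining these two identifications with the underived isomorphism of the previous paragraph gives $i_{X'}^{!} \circ R\ind_{W_nX} \cong R\ind_{X'} \circ i_{X'}^{!}$, which is the asserted commutativity.

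The only genuinely delicate point is the underived identity $(\ind_{W_nX}\cM)[p] = \ind_{X'}(\cM[p])$, and within it the observation that ind--coherence of a $p$--torsion module is detected identically over $W_nX$ and over $X'$; the rest is the formal machinery of right adjoints to exact functors preserving injectives together with the composite--functor theorem. I note in passing that this argument does not require the module--level injectivity of \autoref{inj_ind_coh_is_inj_qcoh}, since the needed acyclicities already come for free from injectivity \emph{within} the relevant Cartier categories.
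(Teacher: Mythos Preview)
Your proof is correct, but it takes a genuinely different route from the paper's. The paper argues entirely by adjunction: all four functors in the square are right derived functors of right adjoints to exact functors (namely $G \colon \IndCoh \hookrightarrow \QCoh$ and $i_{X',*}$), so by \stacksproj{0DVC} each of $i_{X'}^!$ and $R\ind$ is a right adjoint on $D^+$. Commutativity of a square of right adjoints is equivalent to commutativity of the square of left adjoints, and the latter square
\[
\begin{tikzcd}
D^+(\QCoh_{W_nX}^{C^r}) & & D^+(\IndCoh_{W_nX}^{C^r}) \arrow[ll, "G"'] \\
D^+(\QCoh_{X'}^{C^r}) \arrow[u, "{i_{X',*}}"] & & D^+(\IndCoh_{X'}^{C^r}) \arrow[u, "{i_{X',*}}"'] \arrow[ll, "G"]
\end{tikzcd}
\]
is visibly commutative. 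This avoids any explicit computation.

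Your approach instead establishes the underived identity $(\ind_{W_nX}\cM)[p] = \ind_{X'}(\cM[p])$ by a direct submodule argument, then promotes it using the composite--functor formalism. This is longer but more transparent: it makes explicit \emph{why} the functors commute (both sides pick out the same maximal ind--coherent $p$--torsion piece), and your observation that ind--coherence of $p$--torsion modules is detected identically over $W_nX$ and over $X'$ is exactly the content hidden inside the paper's ``immediate'' commutativity of the adjoint square. Your closing remark that \autoref{inj_ind_coh_is_inj_qcoh} is not needed here is also correct and worth noting; that proposition only enters in the companion \autoref{second_commutativity}.
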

	\begin{proof}
		Since $i_{X'}^{\flat}$ is a right adjoint of the inclusion $\QCoh_{X'}^{F^r} \inj \QCoh_{W_nX}^{F^r}$, we know by \stacksproj{0DVC} that $i_{X'}^!$ is a right adjoint of $D^+(\QCoh_{X'}^{F^r}) \to D^+(\QCoh_{W_nX}^{F^r})$ (and similarly for $\IndCoh$). We have a similar picture for the $\ind$ functors, so in order to check that the diagram commutes, it is enough to check it for their adjoints. Thus, we must show that the diagram 
		\[ \begin{tikzcd}
				D^+(\QCoh_{W_nX}^{C^r})                        &  & D^+(\IndCoh_{W_nX}^{C^r}) \arrow[ll, "G"']                        \\
		  		D^+(\QCoh_{X'}^{C^r}) \arrow[u, "{i_{X', *}}"] &  & D^+(\IndCoh_{X'}^{C^r}) \arrow[u, "{i_{X', *}}"'] \arrow[ll, "G"]
	  	\end{tikzcd} \] commutes. This is immediate.
	\end{proof}

	Although the previous lemma works exactly the same way for $W_n$--Frobenius modules, we do not know about the following one.

	\begin{lem}\label{second_commutativity}
		The diagram \[ \begin{tikzcd}
			D^+(\IndCoh_{W_nX}^{C^r}) \arrow[d, "i_{X'}^!"] \arrow[rr, "G"] &  & D^+(\QCoh_{W_nX}^{C^r}) \arrow[d, "i_{X'}^!"] \\
			D^+(\IndCoh_{X'}^{C^r}) \arrow[rr, "G"]                         &  & D^+(\QCoh_{X'}^{C^r}).                        
		\end{tikzcd} \] commutes.
	\end{lem}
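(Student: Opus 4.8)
The plan is to reduce the statement to the underived level together with a single acyclicity claim, which is the usual mechanism for commuting an exact functor past a derived functor. First I would record the underived compatibility: for $\cM \in \IndCoh_{W_nX}^{C^r}$, the $p$--torsion $\cM[p] = i_{X'}^{\flat}\cM$ is a subobject of $\cM$, hence ind--coherent by \autoref{ind_coh_stable_under_everything}, so that $i_{X'}^{\flat}G\cM = G\,i_{X'}^{\flat}\cM$ inside $\QCoh_{X'}^{C^r}$. Moreover $G$ is exact (ind--coherence is stable under kernels and cokernels by \autoref{ind_coh_stable_under_everything}), so the whole content of the lemma is the passage from this underived identity to the corresponding identity of right derived functors.

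Next I would invoke the standard criterion for the derived functor of a composite: if the exact functor $G$ sends injective objects of $\IndCoh_{W_nX}^{C^r}$ to $i_{X'}^{\flat}$--acyclic objects of $\QCoh_{W_nX}^{C^r}$, then $R(i_{X'}^{\flat}\circ G)\cong (Ri_{X'}^{\flat})\circ G$. Since $i_{X'}^{\flat}\circ G = G\circ i_{X'}^{\flat}$ as underived functors and the outer $G$ is exact, the right--hand composite $G\circ i_{X'}^{\flat}$ derives to $G\circ i_{X'}^!$, giving $i_{X'}^!\circ G \cong G\circ i_{X'}^!$, which is precisely the asserted commutativity. Thus everything comes down to showing that $G\cI$ is $i_{X'}^{\flat}$--acyclic whenever $\cI$ is an injective object of $\IndCoh_{W_nX}^{C^r}$.

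This acyclicity is the only substantive point, and I would prove it by descending to the underlying $W_n\cO_X$--modules. By \autoref{inj_ind_coh_is_inj_qcoh} such an $\cI$ is injective in $\Mod(W_n\cO_X)$. To compute $Ri_{X'}^{\flat}(G\cI)$ in $\QCoh_{W_nX}^{C^r}$ one takes an injective resolution $G\cI \to \cJ^{\bullet}$; by \autoref{injectives_everywhere} each $\cJ^j$ is injective as a $W_n\cO_X$--module, so after forgetting the Cartier structure $\cJ^{\bullet}$ is an injective resolution of $\cI$ in $\Mod(W_n\cO_X)$. Because the underived functor $i_{X'}^{\flat}$ commutes with the exact, faithful forgetful functor to $\Mod(W_n\cO_X)$, the underlying modules of $R^i i_{X'}^{\flat}(G\cI)$ are the module--level groups $R^i\sHom_{W_n\cO_X}(W_n\cO_X/p,\cI)$, and these vanish for $i>0$ since $\cI$ is an injective module. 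As a Cartier module vanishes exactly when its underlying module does, we get $R^i i_{X'}^{\flat}(G\cI)=0$ for $i>0$, i.e. $G\cI$ is $i_{X'}^{\flat}$--acyclic.

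The main obstacle is exactly this last step: the compatibility of the Cartier--category derived functor $i_{X'}^!$ with the forgetful functor to $W_n\cO_X$--modules. This is where \autoref{injectives_everywhere} is indispensable, and it is the reason that — unlike \autoref{first_commutativity}, which is a formal consequence of adjunction — the present lemma genuinely relies on the injectivity results established earlier in this section. I would expect the write--up to spend essentially all of its effort making the forgetful--functor comparison precise, the remaining assembly being formal.
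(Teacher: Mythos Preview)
Your proposal is correct and follows essentially the same route as the paper: reduce to showing that $G$ sends injectives in $\IndCoh_{W_nX}^{C^r}$ to $i_{X'}^{\flat}$--acyclics in $\QCoh_{W_nX}^{C^r}$, and then use \autoref{injectives_everywhere} to compare with the module--level derived functor, where acyclicity is immediate because $\cI$ is already an injective module. The only cosmetic difference is that the paper passes through $\QCoh_{W_nX}$ rather than $\Mod(W_n\cO_X)$, but the mechanism and the key input are identical.
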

	\begin{proof}
		This diagram certainly commutes on the non--derived level, so given that $G$ is exact, it is enough to show that $G$ sends injectives to acyclic objects for $i_{X'}^{\flat} \colon \QCoh_{W_nX}^{C^r} \to \QCoh_{X'}^{C^r}$. Let $\cI \in \IndCoh_{W_nX}^{C^r}$ be an injective object. Since $\QCoh_{W_nX}^{C^r} \to \QCoh_{W_nX}$ preserves injective objects (\autoref{injectives_everywhere}), it is enough to show that $\cI$ is acyclic for $i_{X'}^{\flat} \colon \QCoh_{W_nX} \to \QCoh_{X'}$. However, $\cI$ is injective in $\QCoh_{W_nX}$ by \autoref{injectives_everywhere}, so we are done.
	\end{proof}
	
	\begin{cor}\label{ind-coh is ind-acyclic}
		Let $\Mcal \in \IndCoh_{W_nX}^{C^r}$. Then for all $i > 0$, $R^i\ind_X(\Mcal) = 0$.
	\end{cor}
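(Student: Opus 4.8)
The plan is to deduce the vanishing from the two commutativity results \autoref{first_commutativity} and \autoref{second_commutativity}, together with the $n = 1$ case applied to the genuine $\bF_p$--scheme $X'$. First I would reformulate the claim. Since $\ind_{W_nX}$ is right adjoint to the exact inclusion $G \colon \IndCoh_{W_nX}^{C^r} \to \QCoh_{W_nX}^{C^r}$ and $\ind_{W_nX} \circ G = \id$, the unit of this adjunction provides, for $\cM \in \IndCoh_{W_nX}^{C^r}$, a natural map $\cM \to R\ind_{W_nX}(G\cM)$ which is an isomorphism on $\cH^0$. Writing $C$ for its cone, a short diagram chase on the long exact cohomology sequence shows $C \in D^{\geq 1}$ with $\cH^i(C) = R^i\ind_{W_nX}(G\cM)$ for $i > 0$, so it suffices to prove that $C = 0$.

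Next I would apply $i_{X'}^!$ to this unit map. By \autoref{first_commutativity} we have $i_{X'}^! R\ind_{W_nX}(G\cM) \cong R\ind_{X'}\big(i_{X'}^!(G\cM)\big)$, and by \autoref{second_commutativity} we have $i_{X'}^!(G\cM) \cong G\big(i_{X'}^!\cM\big)$, these isomorphisms being the mate transformations of the adjunction data (as in the proof of \autoref{first_commutativity}), hence compatible with the adjunction units. The complex $i_{X'}^!\cM$ lies in $D^+(\IndCoh_{X'}^{C^r})$, so it has ind--coherent cohomology over $X'$; the $n = 1$ statement, applied to the $\bF_p$--scheme $X'$ exactly as in the proof of \autoref{inj_ind_coh_is_inj_qcoh}, says that $G$ lands in $\ind_{X'}$--acyclic objects, and the hyperderived--functor spectral sequence then upgrades this to $R\ind_{X'} \circ G = \id$ on $D^+(\IndCoh_{X'}^{C^r})$. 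Consequently $i_{X'}^!$ carries the $W_nX$--unit map to the $X'$--unit map $i_{X'}^!\cM \to R\ind_{X'}\big(G\, i_{X'}^!\cM\big) = i_{X'}^!\cM$, which is an isomorphism; equivalently, $i_{X'}^! C = 0$.

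Finally I would conclude by a conservativity argument for $i_{X'}^! = Ri_{X'}^{\flat}$, where $i_{X'}^{\flat}(\cN) = \cN[p]$ is the left--exact $p$--torsion functor. The key point is that $p$ is nilpotent on $W_n\cO_X$: the ring map $W_n(\bF_p) = \bZ/p^n\bZ \to W_n\cO_X$ shows $p^n = 0$, so every nonzero $W_n\cO_X$--module has nonzero $p$--torsion, i.e. $(-)[p]$ is conservative. If $C \neq 0$, let $i_0$ be its least nonvanishing cohomological degree; left--exactness of $i_{X'}^{\flat}$ gives $\cH^{i_0}(i_{X'}^! C) = \cH^{i_0}(C)[p] \neq 0$, contradicting $i_{X'}^! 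C = 0$. Hence $C = 0$ and $R^i\ind_{W_nX}(\cM) = 0$ for all $i > 0$.

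The step I expect to be the main obstacle is the middle one: verifying that the isomorphisms of \autoref{first_commutativity} and \autoref{second_commutativity} are natural with respect to the adjunction units, so that $i_{X'}^!$ genuinely sends the $W_nX$--unit map to the $X'$--unit map rather than merely identifying the source and target abstractly. This naturality is exactly what licenses the passage from ``$i_{X'}^!$ of the two sides agree'' to ``$i_{X'}^! C = 0$''; once it is in place, the conservativity in the last paragraph is routine given $p^n = 0$.
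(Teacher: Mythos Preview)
Your proposal is correct and follows essentially the same approach as the paper: both argue via the two commutativity squares, the $n=1$ case on $X'$ together with a hypercohomology spectral sequence, and conservativity of $i_{X'}^!$ using that $p^n = 0$. The only cosmetic difference is that the paper works with the counit $\theta \colon G(R\ind_{W_nX}(\cM)) \to \cM$ rather than the unit $\cM \to R\ind_{W_nX}(G\cM)$; since $G$ is a fully faithful exact inclusion these are interchangeable, and your explicit flagging of the naturality of the commutativity isomorphisms with respect to the adjunction maps is a point the paper leaves implicit.
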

	\begin{proof}
		By adjunction (see \stacksproj{0DVC}), we obtain the natural counit morphism $\theta \colon G(R\ind_{W_nX}(\cM)) \to \cM$. We will show that it is an isomorphism. Note that by \autoref{first_commutativity} and \autoref{second_commutativity}, we have a natural isomorphism \[ i_{X'}^!G(R\ind_{W_nX}(\cM)) \cong G(R\ind_{X'}(i_{X'}^!(\cM))).\]
		Since $G(R\ind_{X'}(i_{X'}^!(\cM))) \cong i_{X'}^!(\cM)$ (apply a hypercohomology spectral sequence argument with \cite[Corollary 3.2.23]{Baudin_Duality_between_perverse_sheaves_and_Cartier_crystals}), we deduce the morphism $\theta \colon G(R\ind_{W_nX}(\cM)) \to \cM$ becomes an isomorphism after applying $i_{X'}^!$ . Assume by contradiction that $\theta$ is not an isomorphism, and let $\cC$ denote its cone. Then $i_{X'}^!\cC = 0$ by our work above, so if $j \geq 0$ is the smallest integer such that $\cH^j(\cC) \neq 0$, we deduce that \[ \cH^j(i_{X'}^!\cC) = \cH^j(\cC)[p] = 0. \] This is impossible, since any non--zero $W_n\cO_X$--module must have non--trivial $p$--torsion. 
	\end{proof}
	
	\begin{prop}\label{D^b(Coh) = D^b_{coh}(QCoh) for Cartier modules}
		For any $* \in \{+, b\}$, the functors \[ D^*(\IndCoh_{W_nX}^{C^r}) \ra D^*_{\indcoh}(\QCoh_{W_nX}^{C^r}) \] and \[ D^b(\Coh_{W_nX}^{C^r}) \ra D^b_{\coh}(\QCoh_{W_nX}^{C^r}) \] are equivalences of categories.
	\end{prop}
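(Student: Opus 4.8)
The plan is to deduce both equivalences from the acyclicity established in \autoref{ind-coh is ind-acyclic}, following verbatim the strategy of \cite[Theorem 5.3.1]{Bockle_Pink_Cohomological_Theory_of_crystals_over_function_fields}; this is simply the $W_n$--Cartier analogue of \autoref{D^b(Coh) = D^b_{coh}(QCoh) for F-modules}. The structural input is the adjunction $G \dashv \ind_{W_nX}$, with $G \colon \IndCoh_{W_nX}^{C^r} \to \QCoh_{W_nX}^{C^r}$ exact and fully faithful and $\ind_{W_nX}$ left exact; by \stacksproj{0DVC} this promotes to a derived adjunction $G \dashv R\ind_{W_nX}$ on the $D^+$--level.

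First I would establish that $G \colon D^*(\IndCoh_{W_nX}^{C^r}) \to D^*_{\indcoh}(\QCoh_{W_nX}^{C^r})$ is an equivalence. For full faithfulness it suffices that the unit $\id \to R\ind_{W_nX} \circ G$ is an isomorphism. Given $\cN^\bullet$, I would pick a bounded--below injective resolution $\cN^\bullet \to \cI^\bullet$ in the Grothendieck category $\IndCoh_{W_nX}^{C^r}$ (available by \autoref{all cats of Cartier mods are Grothendieck}). By \autoref{ind-coh is ind-acyclic} every ind--coherent module, in particular each $\cI^j$, is $\ind_{W_nX}$--acyclic, so $\cI^\bullet$ is an $\ind_{W_nX}$--acyclic resolution computing $R\ind_{W_nX}(G\cN^\bullet) = \ind_{W_nX}(\cI^\bullet) = \cI^\bullet \cong \cN^\bullet$, and the unit is precisely this identification.

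For essential surjectivity I would show the counit $G R\ind_{W_nX}(\cM^\bullet) \to \cM^\bullet$ is a quasi--isomorphism for $\cM^\bullet \in D^+_{\indcoh}(\QCoh_{W_nX}^{C^r})$: the hypercohomology spectral sequence $R^p\ind_{W_nX}(\cH^q(\cM^\bullet)) \Rightarrow \cH^{p+q}(R\ind_{W_nX}\cM^\bullet)$ degenerates by \autoref{ind-coh is ind-acyclic}, whence $\cH^i(R\ind_{W_nX}\cM^\bullet) = \ind_{W_nX}(\cH^i(\cM^\bullet)) = \cH^i(\cM^\bullet)$ and the counit realizes this isomorphism. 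The case $* = b$ is then immediate, since this same computation shows $R\ind_{W_nX}$ preserves cohomological amplitude on ind--coherent inputs. For the coherent equivalence I would combine the above with the local coherence of $\IndCoh_{W_nX}^{C^r}$ — by definition every ind--coherent $W_n$--Cartier module is the filtered union of its coherent submodules — which, by the argument of \emph{loc. cit.}, yields $D^b(\Coh_{W_nX}^{C^r}) \to D^b_{\coh}(\IndCoh_{W_nX}^{C^r})$ being an equivalence; composing with the first equivalence restricted to complexes with coherent cohomology gives the claim.

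The main obstacle is, as in the $F$--module case, entirely concentrated in \autoref{ind-coh is ind-acyclic}: once the higher $\ind_{W_nX}$--cohomology of ind--coherent modules is known to vanish, everything above is formal homological bookkeeping (resolution comparison, a degenerating spectral sequence, and the filtered--colimit reduction from ind--coherent to coherent). This is precisely why the substance of this subsection lies in proving that acyclicity by reduction to $X'$, and the present proposition inherits its content from there.
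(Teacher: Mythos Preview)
Your proposal is correct and matches the paper's approach: the paper's proof is a one-line citation (``identical to \cite[Corollary 3.2.24]{Baudin_Duality_between_perverse_sheaves_and_Cartier_crystals}''), and you have simply spelled out the standard argument behind that citation --- the derived adjunction $G \dashv R\ind_{W_nX}$, degeneration of the hypercohomology spectral sequence via \autoref{ind-coh is ind-acyclic}, and the locally-coherent reduction for the $\Coh$--part. The only cosmetic difference is that you cite B\"ockle--Pink 5.3.1 (as in the Frobenius case \autoref{D^b(Coh) = D^b_{coh}(QCoh) for F-modules}) whereas the paper cites its own companion paper, but the underlying argument is the same.
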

	\begin{proof}
		Thanks to \autoref{ind-coh is ind-acyclic}, the proof is identical to that of \cite[Corollary 3.2.24]{Baudin_Duality_between_perverse_sheaves_and_Cartier_crystals}.
	\end{proof}
	
	\subsection{Witt--Cartier crystals}
	\begin{defn}\label{def_crystals}
		Let $\Mcal$ be a quasi-coherent $W_n$--Cartier module on $X$.
		\begin{itemize}
			\item It is said to be \emph{nilpotent} if $\kappa_{\cM}^n = 0$ for some $n \geq 0$, where we do the abuse of notations \[ \kappa_{\cM}^n \coloneqq \kappa_{\cM} \circ F^r_*\kappa_{\cM} \circ \dots \circ F^{r(n - 1)}_*\kappa_{\cM}. \] A morphism of $W_n$--Cartier modules whose kernel and cokernel are nilpotent is called a \emph{nil-isomorphism}.
			\item It is said to be \emph{locally nilpotent} if it is a union of nilpotent $W_n$--Cartier modules. A morphism of $W_n$--Cartier modules whose kernel and cokernel are locally nilpotent is called an \emph{lnil-isomorphism}. The full subcategory of locally nilpotent $W_n$--Cartier modules is denoted $\LNil$.
		\end{itemize}
	\end{defn}

	\begin{lem}\label{loc_nilp_is_ind_coh}
		Let $\cM$ be a locally nilpotent $W_n$--Cartier module. Then $\cM$ is ind--coherent.
	\end{lem}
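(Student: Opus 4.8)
The plan is to first reduce to the nilpotent case, and then exhibit enough coherent Cartier submodules by hand.

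First I would invoke \autoref{ind_coh_stable_under_everything}, by which ind--coherence is stable under colimits. By definition a locally nilpotent $\cM$ is the union of its nilpotent $W_n$--Cartier submodules, and this family is closed under finite sums (if $\kappa^{N_1} = 0$ and $\kappa^{N_2} = 0$ on two submodules, then $\kappa^{\max(N_1, N_2)} = 0$ on their sum), hence directed. Thus $\cM$ is a filtered colimit of nilpotent submodules, and it suffices to prove that every nilpotent $W_n$--Cartier module is ind--coherent.

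So assume $\kappa_\cM^N = 0$. Since $X$ is Noetherian and $\cM$ is quasi--coherent, $\cM$ is the filtered union of its coherent $W_n\cO_X$--submodules $\cN$ (with no $\kappa_\cM$--stability required). For each such $\cN$ I would form the Cartier submodule it generates, namely
\[ \cN' \coloneqq \sum_{i = 0}^{N - 1} \kappa_\cM^i\big(F^{ri}_*\cN\big) \inc \cM. \]
Because the Frobenius of $W_nX$ is finite (a consequence of the $F$--finiteness of $X$), each $F^{ri}_*\cN$ is coherent over $W_n\cO_X$, so each summand --- being an image of a coherent module --- is coherent, and therefore so is the \emph{finite} sum $\cN'$. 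Using the identity $\kappa_\cM^{i + 1} = \kappa_\cM \circ F^r_*\kappa_\cM^i$ together with $\kappa_\cM^N = 0$, one checks directly that $\kappa_\cM(F^r_*\cN') \inc \cN'$, so $\cN'$ is a coherent $W_n$--Cartier submodule containing $\cN$. Since $\cN \inc \cN' \inc \cM$, the module $\cM$ is the union of the coherent Cartier submodules $\cN'$, i.e. it is ind--coherent.

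The one place where nilpotence is essential is in guaranteeing that $\cN'$ is coherent: for a general Cartier module the Cartier saturation $\sum_{i \geq 0}\kappa_\cM^i(F^{ri}_*\cN)$ need not terminate and may fail to be coherent, which is precisely why reducing to the nilpotent case via the colimit--stability of \autoref{ind_coh_stable_under_everything} is the decisive first step. Everything else is formal and uniform in $n$; in particular, unlike several neighbouring statements, this one requires no reduction to the case $n = 1$.
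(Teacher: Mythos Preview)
Your argument is correct. The reduction to the nilpotent case via \autoref{ind_coh_stable_under_everything} is fine, and the construction of the coherent Cartier submodule $\cN'$ works exactly as you say: finiteness of the Frobenius on $W_nX$ makes each summand coherent, and the identity $\kappa_\cM \circ F^r_*\kappa_\cM^i = \kappa_\cM^{i+1}$ together with $\kappa_\cM^N = 0$ shows $\cN'$ is $\kappa_\cM$--stable.

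The paper, however, proceeds differently: rather than handling the nilpotent case directly, it observes that if $\cM$ is locally nilpotent then so is the $p$--torsion submodule $\cM[p]$, viewed as an ordinary Cartier module on the $\bF_p$--scheme $X' = (X, W_n\cO_X/p)$; the $n = 1$ result from the literature then gives that $\cM[p]$ is ind--coherent, and \autoref{indcoh_iff_indcoh_mod_p} transfers this back to $\cM$. Your approach is more self--contained and uniform in $n$ (indeed it reproves the $n = 1$ case rather than invoking it), while the paper's approach is in keeping with its systematic strategy of reducing statements about $W_n$--Cartier modules to the known case $n = 1$ via the $p$--torsion d\'evissage. Both are short; yours has the mild advantage of not relying on an external reference, and the paper's has the advantage of exercising the reduction machinery that is needed anyway for the harder results that follow.
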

	\begin{proof}
		Since $\cM$ is locally nilpotent, so is the Cartier module $\cM[p]$ on $X'$. By the case when $n = 1$ (see e.g. \cite[Remark 3.2.6]{Baudin_Duality_between_perverse_sheaves_and_Cartier_crystals} or \cite[Corollary 2.1.4]{Blickle_Bockle_Cartier_crystals}), we deduce that $\cM[p]$ is ind--coherent. We conclude the proof by \autoref{indcoh_iff_indcoh_mod_p}.
	\end{proof}

	\begin{defn}
		We set $\LNilCoh \inc \QCoh_{W_nX}^{C^r}$ to be the full subcategory of objects $\cM$ such that $\cM \sim_C \cN$ for some $\cN \in \Coh_{W_nX}^{C^r}$.
	\end{defn}

	\begin{lem}\label{LNilCoh closed under exts}
		We have that $\LNilCoh \inc \IndCoh_{W_nX}^{C^r}$, and that $\LNilCoh$ is closed under subobjects, quotients and extensions.
	\end{lem}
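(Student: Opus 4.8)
The plan is to first establish the inclusion $\LNilCoh \inc \IndCoh_{W_nX}^{C^r}$ by hand, and then to obtain the three stability properties simultaneously by recognizing $\LNilCoh$ as the preimage of a suitable subcategory under an exact quotient functor.

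For the inclusion I would unwind the relation $\cM \sim_C \cN$: it says precisely that $\cM$ and $\cN$ become isomorphic in the Serre quotient $\QCoh_{W_nX}^{C^r}/\LNil$, hence are linked by a roof $\cM \xleftarrow{a} \cP \xrightarrow{b} \cN$ in which $a$ and $b$ are lnil-isomorphisms. Each leg has locally nilpotent kernel and cokernel, which are ind--coherent by \autoref{loc_nilp_is_ind_coh}, while $\cN$ is coherent and therefore ind--coherent. Chasing through $0 \to \ker b \to \cP \to \im b \to 0$ and $0 \to \im b \to \cN \to \coker b \to 0$ and invoking stability of ind--coherence under subobjects, quotients and extensions (\autoref{ind_coh_stable_under_everything}) shows that $\cP$ is ind--coherent; the analogous chase for $a$ (using $0 \to \ker a \to \cP \to \im a \to 0$ and $0 \to \im a \to \cM \to \coker a \to 0$) then yields that $\cM$ is ind--coherent.

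Next I would set up the reduction. Let $Q \colon \QCoh_{W_nX}^{C^r} \to \QCoh_{W_nX}^{C^r}/\LNil$ be the exact quotient functor, and let $\cC$ be the full subcategory of objects isomorphic to $Q(\cN)$ for some coherent $\cN \in \Coh_{W_nX}^{C^r}$. By definition of $\sim_C$ we have $\LNilCoh = Q^{-1}(\cC)$. Since the preimage of a Serre subcategory under an exact functor is again a Serre subcategory (i.e. is closed under subobjects, quotients and extensions), it suffices to prove that $\cC$ is a Serre subcategory. Closure under subobjects and quotients is elementary: by the description of subobjects in a Serre quotient (\stacksproj{02MS}), every subobject of $Q(\cN)$ is of the form $Q(\cN')$ for an honest $W_n$--Cartier submodule $\cN' \inc \cN$ (take the image of the forward leg of a representing roof), and correspondingly every quotient is $Q(\cN/\cN'')$; as $W_nX$ is Noetherian and $\cN$ is coherent, both $\cN'$ and $\cN/\cN''$ are again coherent, so their images lie in $\cC$.

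The main obstacle is closure of $\cC$ under extensions. Given $0 \to Q\cN' \to Y \to Q\cN'' \to 0$ with $\cN', \cN''$ coherent, I would write $Y = Q\cM$; the previous paragraph represents the sub and the quotient by a submodule $\cM_0 \inc \cM$ with $\cM_0 \sim_C \cN'$ and quotient $\cM'' \coloneqq \cM/\cM_0 \sim_C \cN''$, so $\cM$ is ind--coherent and thus a filtered union $\cM = \colim_\alpha \cM_\alpha$ of coherent $W_n$--Cartier submodules. The goal is to produce a \emph{single} coherent $\cE \inc \cM$ with $\cM/\cE$ locally nilpotent, which exhibits $\cE \inj \cM$ as an lnil-isomorphism and places $Y = Q\cM$ in $\cC$. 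Writing $\pi \colon \cM \to \cM''$ for the projection, the ascending family of subcrystals $Q(\pi(\cM_\alpha)) \inc Q\cN''$, together with the coherent approximations of $Q\cM_0 \cong Q\cN'$, must be shown to stabilize; one then sets $\cE = \cM_{\alpha_0} + \cM_{0,\beta_0}$ for suitable indices and checks, using that $\LNil$ is itself a Serre subcategory, that $\cM/\cE$ is an extension of locally nilpotent modules and hence locally nilpotent. This stabilization is exactly the Noetherianity of coherent $W_n$--Cartier crystals, i.e. the generalized Blickle--Böckle finiteness, and I expect it to be the crux; it can be supplied either as that finiteness result directly, or by reducing to the case $n=1$ through $\cM[p]$ on $X' = (X, W_n\cO_X/p)$ via \autoref{indcoh_iff_indcoh_mod_p} and \autoref{loc_nilp_is_ind_coh}.
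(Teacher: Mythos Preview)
Your argument is correct, but the route you take for extension closure is heavier than the paper's and leans on a result that the paper only establishes afterwards. The paper's proof hinges on a single elementary characterization: $\cM \in \LNilCoh$ if and only if there exists a coherent $W_n$--Cartier submodule $\cM' \subseteq \cM$ with $\cM/\cM'$ locally nilpotent. The nontrivial direction is obtained by the B\"ockle--Pink trick (\cite[Proposition 3.4.6]{Bockle_Pink_Cohomological_Theory_of_crystals_over_function_fields}): a crystal isomorphism $\cN \dashrightarrow \cM$ can be represented by an honest morphism $g \colon F^{er}_*\cN \to \cM$, and then $\cM' \coloneqq \im(g)$ does the job. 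With this characterization in hand, closure under subobjects, quotients and extensions is a routine module-theoretic check using only Noetherianity of $W_nX$ and the fact that $\LNil$ is a Serre subcategory; in particular, for an extension $0 \to \cM_0 \to \cM \to \cM'' \to 0$ one writes $\cM = \bigcup_\alpha \cM_\alpha$ as a filtered union of coherent Cartier submodules, picks $\alpha$ large enough that $\cM_\alpha$ contains the coherent approximant $\cM_0'$ and surjects onto the coherent approximant $\cM_0''$, and observes that $\cM/\cM_\alpha$ is then an extension of two locally nilpotent quotients.

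By contrast, your stabilization argument for the chains $Q(\pi(\cM_\alpha)) \subseteq Q\cN''$ and $Q(\cM_0 \cap \cM_\alpha) \subseteq Q\cN'$ invokes Noetherianity of coherent $W_n$--Cartier crystals, i.e.\ \autoref{Noeth_and_Art}. This is not circular in the paper's logic (\autoref{Noeth_and_Art} is proven by d\'evissage to $n=1$ and does not use the present lemma), so your proof goes through. But you are importing the generalized Blickle--B\"ockle finiteness precisely where the paper avoids it: what you describe as ``the crux'' is in fact unnecessary once one has the characterization above, because a coherent submodule $\cM_0'' \subseteq \cM''$ is automatically contained in some $\pi(\cM_\alpha)$ by mere finite generation, without any appeal to stabilization in the crystal category. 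Your reformulation $\LNilCoh = Q^{-1}(\cC)$ is clean and your treatment of subobjects and quotients of $\cC$ is essentially equivalent to the paper's, but for extensions the paper's direct characterization buys you a proof that is both shorter and logically prior to the finiteness theorem.
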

	\begin{proof}
		The first assertion follows immediately from \autoref{loc_nilp_is_ind_coh} and \autoref{ind_coh_stable_under_everything}. Let us prove the second one. In fact, is it straightforward once we show that $\cM \in \LNilCoh$ if and only if there exists a coherent $W_n$--Cartier submodule $\cM' \inc \cM$ such that $\cM/\cM'$ is in $\LNil$. The ``if'' direction is immediate. To show that the ``only if'' direction holds too, fix $\cM \in \LNilCoh$. By hypothesis, there exists an isomorphism $f \colon \cN \dra \cM$ in $\IndCrys_{W_nX}^{C^r}$. By the same argument as in \cite[Proposition 3.4.6]{Bockle_Pink_Cohomological_Theory_of_crystals_over_function_fields}, there exists a commutative diagram 
		\[ \begin{tikzcd}
			F^{er}_*\cN \arrow[d, "\kappa_{\cN}^e"'] \arrow[rrd, "g"] &  &     \\
			\cN \arrow[rr, "f"', dashed]                              &  & \cM,
		\end{tikzcd} \] 
		where the morphism $g$ is a honest morphism of $W_n$--Cartier modules. Since $g$ is automatically an lnil--isomorphism, taking $\cM' = \im(g) \inc \cM$ gives that $\cM/\cM' \sim_C 0$.
	\end{proof}

	\begin{defn}
		\begin{itemize}
			\item We define $\QCrys_{W_nX}^{C^r} \coloneqq \QCoh_{W_nX}^{C^r}/\LNil$ and $\IndCrys_{W_nX}^{C^r} \coloneqq \IndCoh_{W_nX}^{C^r}/\LNil$ (see \autoref{loc_nilp_is_ind_coh}). We also define the category of $W_n$--Cartier crystals to be $\Crys_{W_nX}^{C^r} \coloneqq \Coh_{W_nX}^{C^r}/\Nil$.
			\item A morphism in either $\QCrys_{W_nX}^{C^r}, \IndCrys_{W_nX}^{C^r}$ or $\Crys_{W_nX}^{C^r}$ will be denoted by dashed arrows $\cM \dra \cN$. 
			\item If $\cM$, $\cN \in \QCoh_{W_nX}^{C^r}$ are objects which become isomorphic in $\QCrys_{W_nX}^{C^r}$, we write $\cM \sim_C \cN$. We use the same notation for complexes in the derived category.
		\end{itemize}
	\end{defn}
	
	\begin{lem}\label{qcrys and incrys are Groth cats}
		Both $\IndCrys_{W_nX}^{C^r}$ and $\QCrys_{W_nX}^{C^r}$ are Grothendieck categories. In particular, they have enough injectives.
	\end{lem}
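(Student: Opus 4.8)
The plan is to realize each of the two crystal categories as the Serre quotient of a Grothendieck category by a \emph{localizing} subcategory, and then invoke the standard fact that such a quotient is again Grothendieck (with enough injectives coming for free). First I would recall from \autoref{all cats of Cartier mods are Grothendieck} that both $\QCoh_{W_nX}^{C^r}$ and $\IndCoh_{W_nX}^{C^r}$ are Grothendieck categories, and from \autoref{loc_nilp_is_ind_coh} that $\LNil \inc \IndCoh_{W_nX}^{C^r}$, so that $\QCrys_{W_nX}^{C^r} = \QCoh_{W_nX}^{C^r}/\LNil$ and $\IndCrys_{W_nX}^{C^r} = \IndCoh_{W_nX}^{C^r}/\LNil$ really are Serre quotients of these two categories. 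Granting that $\LNil$ is localizing in each ambient category (a Serre subcategory closed under arbitrary coproducts), the quotient of a Grothendieck category by a localizing subcategory is again Grothendieck, and every Grothendieck category has enough injectives; this is exactly the input already used in \cite[Lemma 2.2.10]{Baudin_Duality_between_perverse_sheaves_and_Cartier_crystals}, whose argument transposes verbatim.

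It therefore remains to verify that $\LNil$ is localizing. That it is a Serre subcategory is implicit in \autoref{def_crystals}, where the quotient is formed: writing a locally nilpotent module as the union of its nilpotent $W_n$--Cartier submodules gives stability under subobjects and quotients, while stability under extensions follows exactly as in the reference invoked for \autoref{ind_coh_stable_under_everything}. The one genuinely new point is closure under arbitrary coproducts. Here I would argue via the directed--union description: for a family $\{\cM_i\}_{i \in I}$ of locally nilpotent modules, the coproduct $\bigoplus_{i \in I}\cM_i$ is the filtered colimit, along the inclusions, of the finite sub--sums $\bigoplus_{j \in J}\cM_j$ over finite $J \inc I$; each such finite sum is locally nilpotent because $\LNil$ is closed under (finite) extensions, and a directed union of locally nilpotent submodules is again a union of nilpotent submodules, hence locally nilpotent. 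Since coproducts in $\IndCoh_{W_nX}^{C^r}$ agree with those in $\QCoh_{W_nX}^{C^r}$ by the colimit--stability recorded in \autoref{ind_coh_stable_under_everything}, the identical computation handles the ind--coherent case.

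This lemma is essentially formal, so I do not anticipate a real obstacle. The only point requiring attention is precisely the closure of $\LNil$ under arbitrary coproducts: it is what would \emph{fail} for the naive subcategory of globally nilpotent modules (direct sums with unbounded nilpotence degree), and it is the structural reason one must pass to \emph{locally} nilpotent modules before forming the quotient. Everything else is bookkeeping around the general localization machinery.
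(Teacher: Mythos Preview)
Your proposal is correct and follows essentially the same route as the paper: realize each crystal category as the quotient of a Grothendieck category (\autoref{all cats of Cartier mods are Grothendieck}) by the localizing subcategory $\LNil$, and conclude. The only quibble is bibliographic: the relevant lemma in \cite{Baudin_Duality_between_perverse_sheaves_and_Cartier_crystals} is 3.3.5 (the crystal case), not 2.2.10 (the module case), which is precisely what the paper cites.
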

	\begin{proof}
		The proof is identical to that of \cite[Lemma 3.3.5]{Baudin_Duality_between_perverse_sheaves_and_Cartier_crystals}.
	\end{proof}
	
	\begin{cor}\label{qcrys with crys cohom is crys}
		For $* \in \{+, b\}$ the natural functors \[ D^*(\IndCrys_{W_nX}^{C^r}) \ra D^*_{\indcrys}(\QCrys_{W_nX}^{C^r}) \] and \[ D^b(\Crys_{W_nX}^{C^r}) \ra D^b_{\crys}(\QCrys_{W_nX}^{C^r}) \] are equivalences, where the subscript crys in $D^*_{\crys}(\QCrys_{W_nX}^{C^r})$ means that cohomology sheaves of the complex must lie in the essential image of $\Crys_{W_nX}^{C^r} \ra \QCrys_{W_nX}^{C^r}$.
	\end{cor}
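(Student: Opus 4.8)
The plan is to descend the module--level equivalences of \autoref{D^b(Coh) = D^b_{coh}(QCoh) for Cartier modules} to crystals, following the case $n = 1$. The common mechanism is that all four crystal categories are obtained by quotienting out (locally) nilpotent objects, and these Serre quotients interact well with the passage to derived categories; the statement is then extracted by matching up the cohomological support conditions.

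For the ind--coherent equivalence I would first record that $\LNil$ is a localizing Serre subcategory of both $\QCoh_{W_nX}^{C^r}$ and $\IndCoh_{W_nX}^{C^r}$: it is Serre by construction (the quotient $\QCoh_{W_nX}^{C^r}/\LNil$ is defined), it is contained in $\IndCoh_{W_nX}^{C^r}$ by \autoref{loc_nilp_is_ind_coh}, and it is evidently closed under filtered colimits. By the standard localization theorem for derived categories of Grothendieck categories (recall \autoref{qcrys and incrys are Groth cats}), the quotient functors then induce Verdier localizations
\[ D^*(\QCoh_{W_nX}^{C^r})/D^*_{\LNil}(\QCoh_{W_nX}^{C^r}) \xrightarrow{\ \cong\ } D^*(\QCrys_{W_nX}^{C^r}) \]
and the analogous statement for $\IndCoh_{W_nX}^{C^r}$ and $\IndCrys_{W_nX}^{C^r}$. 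Since the localizing subcategory $\LNil$ is literally the same on both sides, the equivalence $D^*(\IndCoh_{W_nX}^{C^r}) \cong D^*_{\indcoh}(\QCoh_{W_nX}^{C^r})$ passes to the quotients; identifying $D^*_{\indcrys}(\QCrys_{W_nX}^{C^r})$ with the image of $D^*_{\indcoh}(\QCoh_{W_nX}^{C^r})$ under the localization then yields the first equivalence.

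The bounded coherent statement is where the real work lies, and it is the main obstacle: the subcategory $\Nil = \LNil \cap \Coh_{W_nX}^{C^r}$ is not closed under infinite colimits, so the Grothendieck--category localization theorem does not apply directly to $\Crys_{W_nX}^{C^r} = \Coh_{W_nX}^{C^r}/\Nil$. Fullness and faithfulness I would obtain as in the ind--coherent case, by restricting the localization above to bounded complexes with coherent cohomology. For essential surjectivity I would argue by hand: given a complex $K \in D^b_{\crys}(\QCrys_{W_nX}^{C^r})$, its cohomology objects lie in the essential image of $\Crys_{W_nX}^{C^r} \to \QCrys_{W_nX}^{C^r}$, so lifting $K$ to $\widetilde{K} \in D^b(\QCoh_{W_nX}^{C^r})$ along the localization produces a complex whose cohomology sheaves lie in $\LNilCoh$. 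Using that $\LNilCoh$ is closed under sub--objects, quotients and extensions and sits inside $\IndCoh_{W_nX}^{C^r}$ (\autoref{LNilCoh closed under exts}), I can replace $\widetilde{K}$, up to the relation $\sim_C$, by a bounded complex of coherent $W_n$--Cartier modules; \autoref{D^b(Coh) = D^b_{coh}(QCoh) for Cartier modules} then realizes it in $D^b(\Crys_{W_nX}^{C^r})$, giving the required preimage and completing the proof.
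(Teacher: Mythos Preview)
Your outline is correct and follows the same strategy the paper intends: the paper's proof is literally ``use \autoref{D^b(Coh) = D^b_{coh}(QCoh) for Cartier modules} and rerun the $n=1$ argument of \cite[Corollary~3.3.8]{Baudin_Duality_between_perverse_sheaves_and_Cartier_crystals}'', and what you have written is precisely a reconstruction of that argument --- localize the module--level equivalences by $\LNil$, then handle the coherent case by approximation.

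Two small points worth tightening. First, when you ``lift $K$ to $\widetilde{K} \in D^b(\QCoh_{W_nX}^{C^r})$'', you should say why a \emph{bounded} lift exists: take $Rs(K)$ for $s$ the section functor (right adjoint to the quotient) and truncate above, using exactness of the quotient. Second, the step ``replace $\widetilde{K}$, up to $\sim_C$, by a bounded complex of coherent $W_n$--Cartier modules'' is the one real piece of content and deserves a sentence more: since $\widetilde{K}$ has cohomology in $\LNilCoh \subseteq \IndCoh_{W_nX}^{C^r}$, the first equivalence of \autoref{D^b(Coh) = D^b_{coh}(QCoh) for Cartier modules} lets you take $\widetilde{K}$ to be an actual bounded complex in $\IndCoh_{W_nX}^{C^r}$; then the usual Noetherian--approximation induction (each term is a filtered union of coherent subobjects, and each $\cH^i(\widetilde{K})$ contains a coherent sub with locally nilpotent quotient) produces a coherent subcomplex $K' \hookrightarrow \widetilde{K}$ which is an lnil--isomorphism. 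Passing $K'$ through $\Coh_{W_nX}^{C^r} \to \Crys_{W_nX}^{C^r}$ then gives the desired preimage; your final citation of \autoref{D^b(Coh) = D^b_{coh}(QCoh) for Cartier modules} at this point is slightly misplaced, since $K'$ is already a complex of coherent objects.
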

	
	\begin{proof}
		Thanks to \autoref{D^b(Coh) = D^b_{coh}(QCoh) for Cartier modules}, the proof is identical to that of \cite[Corollary 3.3.8]{Baudin_Duality_between_perverse_sheaves_and_Cartier_crystals}.
	\end{proof}

	\begin{lem}\label{Crys_supported_at_p^m_is_W_m_Cartier}
		Let $1 \leq m \leq n$ be an integer. Then the canonical functors $\Crys_{W_mX}^{C^r} \to \Crys_{W_nX}^{C^r}$ and $\IndCrys_{W_mX}^{C^r} \to \IndCrys_{W_nX}^{C^r}$ (see \autoref{Crys_supported_at_p^m_is_W_m}) are fully faithful, and their essential image of objects $\cM$ such that $p^m\cM \sim_C 0$.
	\end{lem}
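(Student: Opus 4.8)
The plan is to transcribe the proof of the Frobenius analogue \autoref{Crys_supported_at_p^m_is_W_m}, systematically replacing the Frobenius structural map $\tau$ by the Cartier structural map $\kappa$. Since the functor $\IndCrys_{W_mX}^{C^r} \to \IndCrys_{W_nX}^{C^r}$ preserves colimits and $\IndCrys$ is built from $\Crys$ by colimits, it suffices to treat $\Crys_{W_mX}^{C^r} \to \Crys_{W_nX}^{C^r}$; the ind--coherent case then follows formally. Faithfulness follows from \stacksproj{06XK}, exactly as in \autoref{Crys_supported_at_p^m_is_W_m}.

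For fullness, let $\cM, \cN \in \Crys_{W_mX}^{C^r}$ and let $f \colon \cM \dra \cN$ be a morphism in $\Crys_{W_nX}^{C^r}$. Just as in \autoref{LNilCoh closed under exts} (that is, by \cite[Proposition 3.4.6]{Bockle_Pink_Cohomological_Theory_of_crystals_over_function_fields}), I would represent $f$ by a diagram
\[ \begin{tikzcd} F^{er}_*\cM \arrow[d, "\kappa_{\cM}^e"'] \arrow[rrd, "g"] &  &  \\ \cM \arrow[rr, "f"', dashed] &  & \cN \end{tikzcd} \]
commuting in $\Crys_{W_nX}^{C^r}$, with $g$ an honest morphism of $W_n$--Cartier modules. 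The crucial observation is that $g$ is automatically $W_m$--linear: both $\cN$ and $F^{er}_*\cM$ have their $W_n\cO_X$--action factoring through $R^{n-m} \colon W_n\cO_X \to W_m\cO_X$ (for $F^{er}_*\cM$ this uses that $R$ and $F$ commute), so a $W_n$--linear map between them is a morphism of $W_m$--Cartier modules. Since $\kappa_\cM \colon F^r_*\cM \to \cM$ is always a nil--isomorphism (its kernel $\cK$ has vanishing structural map because $\kappa_{F^r_*\cM} = F^r_*\kappa_\cM$ carries $F^r_*\cK$ into $F^r_*(\kappa_\cM(\cK)) = 0$, and dually for its cokernel), the same holds in $\Crys_{W_mX}^{C^r}$; hence $\kappa_\cM^e$ is invertible there and $f = g \circ (\kappa_\cM^e)^{-1}$ lies in the image of $\Hom_{\Crys_{W_mX}^{C^r}}(\cM, \cN)$.

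For the essential image, any object coming from $\Crys_{W_mX}^{C^r}$ is genuinely killed by $p^m$ (as $W_m\cO_X$ is), so satisfies $p^m\cM \sim_C 0$. Conversely, take a coherent representative $\cM$ with $p^m\cM \sim_C 0$; then $p^m\cM$ is coherent and locally nilpotent, hence nilpotent, so $\cM \to \cN \coloneqq \cM/p^m\cM$ is a nil--isomorphism and $\cN$ is a $W_n\cO_X/p^m$--Cartier module. The kernel of $\overline{R^{n-m}} \colon W_n\cO_X/p^m \surj W_m\cO_X$ is killed by a power of the Frobenius (concretely $F^mV^m = p^m$, so $F^{er}$ annihilates it once $er \geq m$); therefore the $W_n\cO_X/p^m$--action on $F^{er}_*\cN$ factors through $\overline{R^{n-m}}$, exhibiting $F^{er}_*\cN$ as a $W_m$--Cartier module. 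As $\kappa_\cN^e \colon F^{er}_*\cN \to \cN$ is a nil--isomorphism, we obtain $\cM \cong \cN \cong F^{er}_*\cN$ in $\Crys_{W_nX}^{C^r}$, which is therefore in the essential image.

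I expect the one genuinely load--bearing point to be the fullness step: identifying the representing morphism $g$ as $W_m$--linear and, above all, recording that the Cartier structural map $\kappa_\cM$ is a nil--isomorphism in the crystal category (so that $\kappa_\cM^e$ can be inverted). Once these are in place, every remaining step is a direct transcription of the Frobenius argument in \autoref{Crys_supported_at_p^m_is_W_m}.
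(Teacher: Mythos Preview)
Your proof is correct and takes essentially the same approach as the paper, which simply states that the argument is identical to that of \autoref{Crys_supported_at_p^m_is_W_m} with the commutative diagram replaced by the one you wrote (with $\kappa_{\cM}^e$ in place of $\tau_{\cN}^e$). You have in fact been more careful than the paper in spelling out why $g$ is automatically $W_m$--linear, why $\kappa_{\cM}$ is a nil--isomorphism, and in explicitly passing to $\cM/p^m\cM$ before invoking the vanishing of $\ker(\overline{R^{n-m}})$ under a Frobenius power.
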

	\begin{proof}
		The proof is the same as that of \autoref{Crys_supported_at_p^m_is_W_m}, where instead of the commutative diagram in \emph{loc. cit.}, we have the diagram \[ \begin{tikzcd}
			\cM \arrow[rr, "f", dashed]                              &  & \cN. \\
			F^{er}_*\cM \arrow[u, "\kappa^e_{\cM}"] \arrow[rru, "g"] &  &    
		\end{tikzcd} \]
	\end{proof}

	\subsection{Finiteness theorems and fixed points of Cartier operators}

	Now, let us move to the important finiteness properties of $W_n$--Cartier modules and crystals. They will all follow from the case of usual Cartier modules.
	
	\begin{defn}
		Let $\cM \in \Mod(W_n\cO_X[C^r])$. The \emph{perfection} of $\cM$ is by definition \[ \cM^{\perf} \coloneqq \lim F^{er}_*\cM. \]
	\end{defn}
	
	\begin{lem}\label{Gabber_finiteness_prop}
		Let $\cM$ be a coherent $W_n$--Cartier module on $X$.
		\begin{enumerate}
			\item The inverse system $\{F^{er}_*\cM\}_{e \geq 1}$ satisfies the Mittag--Leffler condition. In particular, $\cM^{\perf} = \Rlim F^{er}_*\cM$ and perfection is exact on coherent $W_n$--Cartier modules.
			\item For any proper morphism $\pi \colon X \to Y$ of Noetherian $F$--finite $\bF_p$--schemes, it holds that $(R^i\pi_*\cM)^{\perf} \cong R^i\pi_*\cM^{\perf}$.
			\item We have that $\cM \sim_C 0$ if and only if $\cM^{\perf} = 0$.
			\item For any morphism $f \colon \cM \to \cN$ of coherent $W_n$--Cartier modules, $f$ is a nil--isomorphism if and only if $f^{\perf}$ is an isomorphism.
		\end{enumerate}
	\end{lem}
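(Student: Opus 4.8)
The plan is to deduce all four parts from the classical $n=1$ theory of \cite{Blickle_Bockle_Cartier_modules_finiteness_results} by dévissage along the finite $p$-adic filtration
\[ \cM \supseteq p\cM \supseteq \cdots \supseteq p^{n}\cM = 0. \]
Since $F^r(p) = p$, we have $F^r_*(p^i\cM) = p^i F^r_*\cM$, and $W_n\cO_X$-linearity of $\kappa_{\cM}$ gives $\kappa_{\cM}(F^r_*(p^i\cM)) \subseteq p^i\cM$; hence $\kappa_{\cM}$ respects the filtration and induces a Cartier structure $\bar\kappa_i$ on each graded piece $\mathrm{gr}^i\cM \coloneqq p^i\cM/p^{i+1}\cM$. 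As these are coherent and killed by $p$, they are coherent Cartier modules on the Noetherian $F$-finite $\bF_p$-scheme $X' = (X, W_n\cO_X/p)$, so the $n=1$ results apply to them verbatim.

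For (1), I would apply the exact functor $F^{er}_*$ to the filtration to obtain short exact sequences of inverse systems relating $\{F^{er}_*(p^{i+1}\cM)\}_e$, $\{F^{er}_*(p^i\cM)\}_e$ and $\{F^{er}_*\mathrm{gr}^i\cM\}_e$. Each $\{F^{er}_*\mathrm{gr}^i\cM\}_e$ satisfies the Mittag--Leffler condition by the $n=1$ case, and Mittag--Leffler is preserved under extensions of towers; descending induction on $i$ then yields the condition for $\{F^{er}_*\cM\}_e$. Consequently $R^1\lim$ vanishes, so $\cM^{\perf} = \Rlim F^{er}_*\cM$; and since applying $F^{er}_*$ to any short exact sequence of coherent $W_n$--Cartier modules produces Mittag--Leffler systems, the functor $\lim$ (hence perfection) is exact on them.

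Parts (3) and (4) are then formal. For (3), exactness of perfection reduces $\cM^{\perf} = 0$ to $(\mathrm{gr}^i\cM)^{\perf} = 0$ for all $i$, which by the $n=1$ case means each $\mathrm{gr}^i\cM$ is nilpotent; and $\cM$ is nilpotent iff all the $\mathrm{gr}^i\cM$ are, the nontrivial direction being that if $\bar\kappa_i^{e} = 0$ for all $i$ then $\kappa_{\cM}^{e}$ shifts the filtration down by one step, whence $\kappa_{\cM}^{ne} = 0$. As a coherent locally nilpotent module over the Noetherian $X$ is nilpotent, this gives the equivalence with $\cM \sim_C 0$. For (4), exactness of perfection identifies $\ker(f^{\perf})$ and $\coker(f^{\perf})$ with $(\ker f)^{\perf}$ and $(\coker f)^{\perf}$, so $f^{\perf}$ is an isomorphism iff both vanish, iff (by (3)) $\ker f$ and $\coker f$ are nilpotent, iff $f$ is a nil--isomorphism. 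For (2) I would mirror the $n=1$ argument now that (1) is available: as $F$ is finite (hence affine) and $\pi F^{er} = F^{er}\pi$, we have $R\pi_* F^{er}_* \cong F^{er}_* R\pi_*$, and $R\pi_*$ commutes with $\Rlim$, giving $R\pi_*\cM^{\perf} \cong \Rlim_e F^{er}_* R\pi_*\cM$. Taking $\cH^i$ and using that $\{F^{er}_* R^{i-1}\pi_*\cM\}_e$ is Mittag--Leffler (part (1) applied to the coherent $W_n$--Cartier module $R^{i-1}\pi_*\cM$ on $Y$) kills the $R^1\lim$ term, so $R^i\pi_*\cM^{\perf} \cong \lim_e F^{er}_* R^i\pi_*\cM = (R^i\pi_*\cM)^{\perf}$.

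The main obstacle is part (1): the Mittag--Leffler condition is precisely where the Gabber-type finiteness of \cite{Blickle_Bockle_Cartier_modules_finiteness_results} enters, and the $W_n$--statement is not literally the $n=1$ one. The crux is to verify carefully that the graded pieces are genuine coherent Cartier modules on the Noetherian $F$--finite $\bF_p$--scheme $X'$, together with the stability of Mittag--Leffler under extensions of towers; once this is secured, parts (2)--(4) follow by dévissage or by transporting the $n=1$ arguments word for word.
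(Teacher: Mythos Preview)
Your proposal is correct and follows essentially the same route as the paper. Both arguments reduce part~(1) to the $n=1$ case via the $p$--adic filtration $\cM \supseteq p\cM \supseteq \cdots \supseteq p^n\cM = 0$ and the stability of the Mittag--Leffler condition under extensions of towers (the paper phrases this as iterating the sequence $0 \to p\cM \to \cM \to \cM/p\cM \to 0$, citing \cite[Lemma~13.1]{Gabber_notes_on_some_t_structures} rather than \cite{Blickle_Bockle_Cartier_modules_finiteness_results} for the $n=1$ input, but the content is the same). Parts~(2) and~(4) are handled identically. The only substantive difference is in part~(3): the paper argues directly from Mittag--Leffler (the stable image $\cM_s \subseteq \cM$ has surjective structural map, so $\cM^{\perf} = \cM_s^{\perf}$ surjects onto $\cM_s$, forcing $\cM_s = 0$), whereas you d\'evissage once more through the graded pieces and invoke the $n=1$ statement; both are valid, with the paper's version slightly more self--contained.
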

	\begin{proof}
		\begin{enumerate}
			\item Consider the exact sequence of coherent $W_n$--Cartier modules \[ 0 \to p\cM \to \cM \to \cM/p\cM \to 0. \]
			Since $\cM/p\cM$ is a Cartier module over $X'$, we know by \cite[Lemma 13.1]{Gabber_notes_on_some_t_structures} that the Mittag--Leffler condition holds for the system $\{F^{er}_*(\cM/p\cM)\}_e$, so it is enough to show that it also holds for the system $\{F^{er}_*(p\cM)\}_e$. Repeating the same procedure and using that the result trivially holds for $p^n\cM = 0$, we conclude the proof.
			\item We have that \[ R\pi_*\cM^{\perf} \expl{\cong}{see the previous point} R\pi_* \Rlim_e F^{er}_*\cM \cong \Rlim_e F^{er}_*R\pi_*\cM. \] The statement now follows by taking $\cH^i$ and a spectral sequence argument, since all the systems $\{ F^{er}R^j\pi_*\cM \}_{e \geq 0}$ satisfy the Mittag--Leffler condition (hence are $\lim$--acyclic).
			\item This is immediate from the Mittag--Leffler property of the system $\{F^{er}_*\cM\}_e$.
			\item This follows from the previous point and exactness of perfection. $\qedhere$
		\end{enumerate}
	\end{proof}

	\begin{rem}
		One might think from \autoref{Gabber_finiteness_prop} that perfection of $W_n$--Cartier modules plays an analogous role as perfection for $W_n$--Frobenius modules. There are several issues though, relying on the fact that inverse limits are not very pleasant to work with. For example, the Cartier module $H^0(\bA^1_k, \omega_{\bA^1_k})$ over a perfect field $k$ is locally nilpotent, but its structural morphism is surjective, so perfection will not vanish!

		The correct analogue is the \emph{unitalization functor}, sending $\cM \in \QCoh_{W_nX}^{C^r}$ to $\cM^u \coloneqq \colim F^{er, \flat}\cM \in \QCoh_{W_nX}^{C^r, \unit}$. For example, it induces an equivalence between $\QCrys_{W_nX}^{C^r}$ and $\QCoh_{W_nX}^{C^r, \unit}$ (compare with \autoref{perfection_induces_iso_crys_and_hol}).
	\end{rem}

	Let us move to finiteness properties of pushforwards of ind--coherent $W_n$--Cartier modules (which completely fails for $W_n$--Frobenius modules). To deal with open immersions, we will give a different proof than that of \cite{Baudin_Duality_between_perverse_sheaves_and_Cartier_crystals}.

	\begin{prop}\label{finiteness_pushforwards}
		Let $f \colon X \to Y$ be a separated morphism of Noetherian $F$--finite $\bF_p$--schemes, and let $i \geq 0$. Then for any $\cM \in \IndCoh_{W_nX}^{C^r}$, we have that $R^if_*\cM \in \IndCoh_{W_nY}^{C^r}$. If furthermore $\cM \in \Coh_{W_nY}^{C^r}$, then there exists $\cN \in \Coh_{W_nY}^{C^r}$ such that $R^if_*\cM \sim_C \cN$.
	\end{prop}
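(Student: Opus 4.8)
The plan is to prove both assertions simultaneously by reducing, through standard dévissage, to the two basic cases of a proper morphism and of an open immersion; the latter will be the crux, and I would handle it by a direct localisation computation at Teichmüller lifts rather than by the local cohomology style argument that underlies the case $n=1$.

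First I would observe that both conclusions are local on $Y$, so I may assume $Y=\Spec A$ is affine. Since $X$ is Noetherian, hence quasi--compact, and $f$ is separated, I can choose a finite affine open cover $\{U_j\}$ of $X$ all of whose intersections $U_I$ are affine, and the associated \v{C}ech complex computes $Rf_*\cM$ in $D(\QCoh_{W_nY}^{C^r})$ with terms finite direct sums of $(f|_{U_I})_*(\cM|_{U_I})$. As each $R^if_*\cM$ is a subquotient of such a complex and both $\IndCoh_{W_nY}^{C^r}$ and $\LNilCoh$ are stable under subobjects and quotients (\autoref{ind_coh_stable_under_everything} and \autoref{LNilCoh closed under exts}), it suffices to treat the pushforwards along the affine maps $U_I\to Y$. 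By Nagata compactification (the essential case being when $f$ is of finite type) together with one more application of the same dévissage, now also using stability under extensions, the whole statement reduces to the two cases: (i) $f$ proper, and (ii) $f$ an open immersion.

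The proper case is essentially formal. For $\pi$ proper, Grothendieck's coherence theorem over the Noetherian scheme $W_nY$ shows that $R^i\pi_*$ carries coherent $W_n\cO_X$--modules to coherent $W_n\cO_Y$--modules, and the Cartier structure is inherited on cohomology; hence $R^i\pi_*\cM$ is a coherent $W_n$--Cartier module when $\cM$ is coherent (in particular it lies in $\LNilCoh$). For $\cM$ merely ind--coherent I would write it as the filtered union of its coherent Cartier submodules and use that cohomology commutes with filtered colimits on a Noetherian scheme, combined with stability of ind--coherence under colimits.

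The open immersion case is the main obstacle. Working locally on the target I may assume it affine, and then a \v{C}ech argument on the source reduces me to a principal open $j\colon D(t)\hookrightarrow \Spec A$, for which $Rj_*=j_*$ is the localisation $\cM_{[t]}=\bigcup_{k}[t]^{-k}\cM$ at the Teichmüller lift $[t]$ (recall that $(W_nA)_{[t]}=W_n(A_t)$). The key point is that, since $\kappa_{\cM}$ is $W_n\cO_X$--linear and $F^r([t])=[t]^{q}$, each $[t]^{-k}\cM$ is a coherent Cartier submodule and $\kappa_{\cM}$ carries $F^r_*([t]^{-k}\cM)$ into $[t]^{-\lceil k/q\rceil}\cM$; being an increasing union of coherent Cartier submodules, $\cM_{[t]}$ is ind--coherent. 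Iterating, $\kappa_{\cM}^{e}$ sends $[t]^{-k}\cM$ into $[t]^{-\lceil k/q^{e}\rceil}\cM=[t]^{-1}\cM$ as soon as $q^{e}\ge k$, so the quotient $\cM_{[t]}/[t]^{-1}\cM$ is locally nilpotent; hence $\cM_{[t]}\sim_C[t]^{-1}\cM$, and $[t]^{-1}\cM$, being a quotient of the coherent module $\cM$, is coherent. This simultaneously shows that $j_*$ preserves ind--coherence and that it sends coherent modules into $\LNilCoh$, and it is precisely this localisation computation, valid uniformly in $n$, that replaces the argument of \cite{Baudin_Duality_between_perverse_sheaves_and_Cartier_crystals}.
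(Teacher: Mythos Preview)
Your overall d\'evissage (\v{C}ech, Nagata, Leray) matches the paper's, and the proper case is handled the same way. But there is a genuine gap in the open immersion step. After Nagata, the open immersion is $U\hookrightarrow\overline{U}$ and the module you must push forward is $\cN\coloneqq\cM|_{U}$, which is \emph{not} given as the restriction of a coherent $W_n$--Cartier module on $\overline{U}$. When you then localise on the target and reduce to a principal open $j\colon D(t)\hookrightarrow\Spec A$, the object on $D(t)$ is still just some coherent $W_n$--Cartier module $\cN$ on $D(t)$; your formula $j_*\cN=\cM_{[t]}=\bigcup_k[t]^{-k}\cM$ presupposes a coherent $W_n$--Cartier module $\cM$ on $\Spec A$ with $\cM|_{D(t)}=\cN$, and you have not produced one. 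Your subsequent argument that each $[t]^{-k}\cM$ is a coherent Cartier submodule and that $\cM_{[t]}/[t]^{-1}\cM$ is locally nilpotent is correct \emph{once such an $\cM$ exists}, but it is precisely the existence of this extension that is the non--trivial content here. The paper isolates this as \autoref{finiteness_pushforward_open_immersion}(2): it first shows $j_*\cN$ is ind--coherent by reducing to $n=1$ via the $p$--torsion criterion (\autoref{indcoh_iff_indcoh_mod_p}), and then picks a coherent Cartier submodule of $j_*\cN$ containing generators of $\cN$.

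That said, your localisation approach can be repaired to stay uniform in $n$, without invoking the $n=1$ case. Pick $W_n(A_t)$--generators of $\cN$, let $M_0\subseteq j_*\cN$ be the $W_nA$--submodule they span, and use $F$--finiteness of $W_nA$ to find $l_0$ with $\kappa_{\cN}(F^r_*M_0)\subseteq[t]^{-l_0}M_0$; your own contraction estimate then gives $\kappa_{\cN}(F^r_*([t]^{-l}M_0))\subseteq[t]^{-\lceil l/q\rceil-l_0}M_0$, so $[t]^{-l}M_0$ is a coherent Cartier submodule for $l\gg0$ and can serve as the missing $\cM$. With this fix, your route is genuinely different from the paper's: the paper uses the identification $R^ij_*(\cM|_U)\sim_C\EExt^i(W_n\cI,\cM)$ of \autoref{finiteness_pushforward_open_immersion}(1) together with a reduction to $n=1$, whereas you argue directly with Teichm\"uller localisation. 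Your approach is more elementary and avoids the $\EExt$ description; the paper's approach has the advantage that the $\EExt$ identification is reused elsewhere (e.g.\ in \autoref{weird_qproj_lemma}).
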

	\begin{rem}
		By \autoref{injectives_everywhere}, there is no ambiguity in what we mean by derived pushforwards.
	\end{rem}
	
	We first deal with open immersions. To do so, let us introduce a new functor, which we will see again when studying the duality:
	
	\begin{constr}\label{case_pairing}
		Let $\cI \subseteq \cO_X$ be a quasi--coherent ideal, and let $\cM \in \Mod(W_n\cO_X[C^r])$. Let us define a $W_n$--Cartier module structure on $\HHom_{W_n\cO_X}(W_n\cI, \cM)$ by sending $f \colon W_n\cI \to \cM$ to the function sending $s \in W_n\cI$ to $\kappa_{\cM}(f(F^r(s))) \in \cM$.
	\end{constr}

	Since we have an endofunctor $\HHom(W_n\cI, -)$ on $W_n$--Cartier modules, we can take its derived functor. By \autoref{injectives_everywhere}, the underlying quasi--coherent sheaf agrees with $\EExt^i_{W_n\cO_X}(W_n\cI, -)$.

	\begin{lem}\label{finiteness_pushforward_open_immersion}
		Let $j \colon U \inj X$ be an open immersion of Noetherian $F$--finite $\bF_p$--schemes.
		\begin{enumerate}
			\item\label{cool_finiteness} Let $\cI$ denote the ideal of the closed subscheme $X \setminus U$ (with any scheme structure). Then for any $\cM \in \QCoh_{W_nX}^{C^r}$, there is a natural lnil--isomorphism \[ R^ij_*(\cM|_U) \sim_C \EExt^i(W_n\cI, \cM). \]
			\item For any $\cN \in \Coh_{W_nU}^{C^r}$, the $W_n$--Cartier module $R^ij_*\cN$ is lnil--isomorphic to a coherent $W_n$--Cartier module on $X$.
		\end{enumerate} 
	\end{lem}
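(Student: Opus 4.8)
The plan is to prove (1) first and deduce (2) by an extension argument. For (1), I would begin from the standard (non-Cartier) description of the derived pushforward, applied on the Noetherian scheme $W_nX$ (whose topology agrees with that of $X$): writing $Z = X \setminus U$ with ideal $W_n\cI \inc W_n\cO_X$, the triangle $R\Gamma_Z\cM \to \cM \to Rj_*(\cM|_U)$ together with $R\Gamma_Z\cM \cong \colim_m R\HHom_{W_n\cO_X}(W_n\cO_X/(W_n\cI)^m, \cM)$ yields, on cohomology,
\[ R^ij_*(\cM|_U) \cong \colim_m \EExt^i_{W_n\cO_X}\big((W_n\cI)^m, \cM\big), \]
the colimit running over the restriction maps induced by $(W_n\cI)^{m+1} \inc (W_n\cI)^m$. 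To make this an identity of $W_n$--Cartier modules, I would fix an injective resolution $\cM \to \cI^\bullet$ in $\QCoh_{W_nX}^{C^r}$, which by \autoref{injectives_everywhere} is also an injective resolution of $\cM$ as a $W_n\cO_X$--module. Then $\EExt^i((W_n\cI)^m,\cM)$ is computed by $\HHom((W_n\cI)^m, \cI^\bullet)$, each term of which is a $W_n$--Cartier module via \autoref{case_pairing} (note $F^r$ is a ring map, hence preserves every power $(W_n\cI)^m$), with Cartier operator literally $\phi \mapsto \kappa_{\cI^\bullet}\circ \phi \circ F^r$; moreover the colimit complex $\colim_m \HHom((W_n\cI)^m, \cI^\bullet)$ is canonically $j_*(j^*\cI^\bullet)$, carrying the pushforward Cartier structure on $R^ij_*(\cM|_U)$.

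The heart of the argument is to show that, for a single injective Cartier module $\cJ$, the canonical map from the $m=1$ term
\[ \beta \colon \HHom(W_n\cI, \cJ) \longrightarrow \colim_m \HHom((W_n\cI)^m, \cJ) \cong j_*(\cJ|_U) \]
is an lnil--isomorphism. This is a "Frobenius contraction": given $m$, by \autoref{Witt_ideal_inclusions} there is $e \geq 1$ with $F^{re}(W_n\cI) \inc (W_n\cI)^m$, and since iterating \autoref{case_pairing} gives $\kappa^e(\phi) = \kappa_{\cJ}^e \circ \phi \circ F^{re}$, any $\phi \in \HHom((W_n\cI)^m, \cJ)$ has $\kappa^e(\phi)$ equal to the image under $\beta$ of the honest map $s \mapsto \kappa_\cJ^e(\phi(F^{re}(s)))$ on $W_n\cI$; hence $\coker(\beta)$ is locally nilpotent. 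Dually, if $\psi$ dies in the colimit then $\psi|_{(W_n\cI)^m}=0$ for some $m$, and the same computation forces $\kappa^e(\psi)=0$, so $\ker(\beta)$ is locally nilpotent. Applying this in every degree shows $\HHom(W_n\cI,\cI^\bullet) \to j_*(j^*\cI^\bullet)$ is a degreewise lnil--isomorphism; since $\LNil$ is a Serre subcategory, it becomes a quasi--isomorphism in $\QCrys_{W_nX}^{C^r}$ and therefore induces lnil--isomorphisms $\EExt^i(W_n\cI,\cM) \sim_C R^ij_*(\cM|_U)$ on cohomology, which is (1).

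For (2), I would first extend $\cN$ to a coherent $W_n$--Cartier module on $X$: extend the coherent $W_n\cO_U$--module to a coherent $W_n\cO_X$--module (possible as $W_nX$ is Noetherian), and then extend the structural morphism exactly as in the case $n=1$ of \cite{Baudin_Duality_between_perverse_sheaves_and_Cartier_crystals}, producing $\cM \in \Coh_{W_nX}^{C^r}$ with $\cM|_U \cong \cN$. Part (1) then gives $R^ij_*\cN \sim_C \EExt^i(W_n\cI, \cM)$, and the target is a coherent $W_n$--Cartier module because $W_n\cI$ and $\cM$ are coherent on the Noetherian scheme $W_nX$; by \autoref{loc_nilp_is_ind_coh} the resulting lnil--isomorphism is between the stated objects.

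The main obstacle is the interplay between the two ideal filtrations $(W_n\cI)^m$ and $W_n(\cI^m)$: the contraction requires $F^{re}(W_n\cI) \inc (W_n\cI)^m$, which is \emph{not} supplied by the naive bound $F^r(W_n\cI) \inc W_n(\cI^q)$, and it is precisely \autoref{Witt_ideal_inclusions} that rescues the step, exactly as in the proof of \autoref{ind_coh_is_ind_acyclic_F_modules}. The other point demanding genuine care is the compatibility between the Cartier structure produced by \autoref{case_pairing} on the colimit and the pushforward structure on $R^ij_*(\cM|_U)$ (so that the conclusion is an lnil--isomorphism of $W_n$--Cartier modules and not merely of underlying sheaves); this is a direct but somewhat tedious verification that I would carry out at the level of the resolution $\cI^\bullet$.
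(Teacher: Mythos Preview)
Your argument for part (1) is correct and is essentially the paper's own proof, only organized slightly differently: the paper first shows for arbitrary $\cM$ that $\HHom(W_n\cI,\cM) \to \colim_m \HHom((W_n\cI)^m,\cM) \cong j_*(\cM|_U)$ is an lnil--isomorphism and then says ``by taking derived functors'' to get all $i$, whereas you fix an injective resolution and run the same contraction argument degreewise. The core step --- using \autoref{Witt_ideal_inclusions} to get $F^{re}(W_n\cI)\subseteq (W_n\cI)^m$ and hence that $\kappa^e$ lands in the image of $\beta$ --- is identical, and your remark that this is precisely where the naive inclusion $F^r(W_n\cI)\subseteq W_n(\cI^q)$ is insufficient is exactly the point the paper is making.

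For part (2) your strategy is the same as the paper's (reduce to finding a coherent $W_n$--Cartier module $\cM$ on $X$ with $\cM|_U\cong\cN$, then invoke (1)), but the step ``extend the structural morphism exactly as in the case $n=1$'' is where you are glossing over the actual content. Extending the underlying coherent sheaf is fine; extending $\kappa_\cN$ is not formal --- a morphism of coherent sheaves on $U$ need not extend over $X$ --- and the $n=1$ arguments you cite ultimately go through the statement that $j_*\cN$ is ind--coherent, which is nontrivial already for $n=1$ and does not carry over verbatim to $W_n$. The paper handles this differently and more cleanly: it observes that $(j_*\cN)[p]=j_*(\cN[p])$ is an ind--coherent Cartier module on $X'=(X,W_n\cO_X/p)$ by the $n=1$ theory, and then invokes \autoref{indcoh_iff_indcoh_mod_p} to conclude that $j_*\cN$ itself is ind--coherent as a $W_n$--Cartier module. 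Noetherianity then produces the desired coherent $\cM\subseteq j_*\cN$ with $\cM|_U=\cN$. So your outline for (2) is salvageable, but the sentence you wrote hides exactly the reduction--to--$n=1$ step that the paper supplies.
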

	\begin{proof}
		\begin{enumerate}
			\item By taking derived functors, it is enough to deal with the case $i = 0$. For any $m \geq 1$, we have by adjunction between $j_*$ and $(\cdot)|_U$ a natural map $\HHom_{W_n\cO_X}((W_n\cI)^m, \cM) \to  j_*\cM|_U$ of quasi--coherent sheaves. Explicitly, it sends $f \colon (W_n\cI)^m \to \cM$ to $f(1_U)$, where $1_U = (1, \dots, 1) \in W_n\cO_X(U) = \Gamma(U, (W_n\cI)^m)$ (whenever $1_U$ is defined). An immediate computation shows that it is compatible with the Cartier module structures, so we obtain a morphism of $W_n$--Cartier modules \[ \colim_m \HHom_{W_n\cO_X}((W_n\cI)^m, \cM) \to j_*\cM|_U. \] It is an isomorphism by \stacksproj{01YB}. To conclude, it is then enough to show that the natural map \[ \HHom_{W_n\cO_X}(W_n\cI, \cM) \to \colim_m \HHom_{W_n\cO_X}((W_n\cI)^m, \cM) \] is an lnil--isomorphism. Hence, it is enough to show that for a fixed $m \geq 1$, the map $\HHom_{W_n\cO_X}(W_n\cI, \cM) \to \HHom_{W_n\cO_X}((W_n\cI)^m, \cM)$ is an lnil--isomorphism. This is an immediate consequence of \autoref{Witt_ideal_inclusions}.
			\item By the previous point, it is enough to show that there exists a coherent $W_n$--Cartier module $\cM$ on $X$ such that $\cM|_U = \cN$. Since $j_*\cN[p]$ is ind--coherent (apply \cite[Corollary 3.2.16]{Baudin_Duality_between_perverse_sheaves_and_Cartier_crystals} or \cite[Corollary 3.2.5]{Blickle_Bockle_Cartier_crystals}) on $X'$), we deduce that \autoref{indcoh_iff_indcoh_mod_p} that $j_*\cN$ is ind--coherent. Thus, we obtain the existence of such an $\cM$ by Noetherianity. \qedhere
		\end{enumerate}
	\end{proof}

	\begin{proof}[Proof of \autoref{finiteness_pushforwards}]
		Since (derived) pushforwards preserve colimits, we may assume that $\cM$ is coherent. Note that it follows from \autoref{loc_nilp_is_ind_coh} and \autoref{ind_coh_stable_under_everything} that if two $W_n$--Cartier modules are lnil--isomorphic, then one is ind--coherent if and only if the other is. Thus, it is enough to show that each $R^if_*\cM$ is lnil--isomorphic to a coherent $W_n$--Cartier module. If $f$ is an open immersion, then the result follows from \autoref{finiteness_pushforward_open_immersion} and if $f$ is proper, this is immediate. In general, we know by \stacksproj{0F41} that $f$ can be written as a composition of two such maps, so we conclude the proof by a Leray spectral sequence argument and \autoref{LNilCoh closed under exts}.
	\end{proof}	

	\begin{prop}\label{Noeth_and_Art}
		Any object in $\Crys_{W_nX}^{C^r}$ is both Noetherian and Artinian.
	\end{prop}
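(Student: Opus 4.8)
The plan is to reduce to the fundamental finiteness theorem for ordinary Cartier crystals (the case $n = 1$), proved by Blickle--Böckle (\cite{Blickle_Bockle_Cartier_modules_finiteness_results, Blickle_Bockle_Cartier_crystals}), by combining the $p$--adic filtration with \autoref{Crys_supported_at_p^m_is_W_m_Cartier}. I would argue by induction on $n$. The base case $n = 1$ is precisely the statement that every object of $\Crys_{W_1 X}^{C^r}$ is both Noetherian and Artinian, which is the cited theorem.

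For the inductive step, fix $\cM \in \Crys_{W_n X}^{C^r}$. Since $F(p) = p$, multiplication by $p$ is an endomorphism of $\cM$ in this abelian category, so I would consider the short exact sequence
\[ 0 \to p\cM \to \cM \to \cM/p\cM \to 0 \]
in $\Crys_{W_n X}^{C^r}$. The quotient $\cM/p\cM$ satisfies $p \cdot (\cM/p\cM) = 0$, and in particular $p(\cM/p\cM) \sim_C 0$, so by \autoref{Crys_supported_at_p^m_is_W_m_Cartier} (applied with $m = 1$) it lies in the essential image of the fully faithful functor $\Crys_{W_1 X}^{C^r} \to \Crys_{W_n X}^{C^r}$. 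Likewise $p^{n-1}(p\cM) = p^n\cM \sim_C 0$, so $p\cM$ lies in the essential image of $\Crys_{W_{n-1} X}^{C^r} \to \Crys_{W_n X}^{C^r}$.

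The step requiring the most care is transferring both chain conditions across these embeddings. The key observation is that the essential image of $\Crys_{W_m X}^{C^r} \to \Crys_{W_n X}^{C^r}$, namely the objects $\cN$ with $p^m \cN \sim_C 0$, is closed under subobjects: if $\cN' \hookrightarrow \cN$ and $p^m\cN \sim_C 0$, then $p^m\cN' \sim_C 0$ as well. Together with full faithfulness and exactness of the functor (which let it reflect monomorphisms), this shows that the lattice of subobjects of an object in the essential image coincides with its lattice of subobjects in the source category; hence the ascending and descending chain conditions both transfer. Consequently $\cM/p\cM$ is Noetherian and Artinian by the base case, and $p\cM$ is Noetherian and Artinian by the induction hypothesis.

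Finally, I would invoke the standard fact that in any abelian category the class of Noetherian objects and the class of Artinian objects are each closed under extensions; applying this to the displayed short exact sequence yields that $\cM$ is both Noetherian and Artinian, which closes the induction. The only genuinely substantial input is the $n = 1$ theorem of Blickle--Böckle; the remaining work is the filtration argument and the bookkeeping needed to move the two chain conditions through the fully faithful embeddings, and it is precisely this bookkeeping I expect to be the main (though routine) obstacle.
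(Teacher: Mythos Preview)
Your proof is correct and essentially identical to the paper's: both use the exact sequence $0 \to p\cM \to \cM \to \cM/p\cM \to 0$, the Blickle--B\"ockle theorem for the base case, and \autoref{Crys_supported_at_p^m_is_W_m_Cartier} to transfer finite length across the embeddings. The only cosmetic difference is that the paper phrases the induction over the torsion level (working inside the fixed category $\Crys_{W_nX}^{C^r}$ with the filtration by $p^m$-torsion subcategories) rather than over the Witt length $n$, and leaves implicit the lattice-of-subobjects bookkeeping that you spelled out.
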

	\begin{proof}
		For $1 \leq m \leq n$, let $\cC_m$ denote the full subcategory of $p^m$-torsion $W_n$--Cartier crystals. We show by ascending induction on $m$ that any element in $\cC_m$ is both Artinian and Noetherian.
	
		For $m = 1$, this follows from \cite[Main 	Theorem]{Blickle_Bockle_Cartier_modules_finiteness_results} (or \cite[Corollary 5.3.1]{Baudin_Duality_between_perverse_sheaves_and_Cartier_crystals}) and \autoref{Crys_supported_at_p^m_is_W_m_Cartier}. In general, for any $\cN \in \cC_{m + 1}$, consider the exact sequence \[ 0 \to p\cN \to \cN \to \cN/p\cN \to 0. \] Then $p\cN \in \cC_n$ and $\cN/p\cN \in \cC_1$, so the result follows by induction.
	\end{proof}

\noindent We conclude this section with miscellaneous results about fixed points of Cartier operators that we will need in \cite{Baudin_Witt_GR_vanishing_and_applications}. 

\begin{situation}
	Until the end of this section, $X$ denotes a separated scheme of finite type over a perfect field $k$, with structural morphism $\pi \colon X \to \Spec k$. We will write $H^0(X, -) \coloneqq \pi_*(-)$.
\end{situation}

\begin{lem}\label{local_existence_of_ss_sections}
	Let $x \in X$, and let $\cM \in \IndCoh_{W_nX}^{C^r}$ such that $\cM_x \not\sim_C 0$. Then there exists an open neighborhood $U$ of $x$ such that $H^0(U, \cM) \not\sim_C 0$. 
\end{lem}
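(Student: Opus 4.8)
The plan is to reduce to the case $n=1$ already established in \cite{Baudin_Duality_between_perverse_sheaves_and_Cartier_crystals}, by passing to the $p$--torsion submodule and trivialising the Witt structure up to a Frobenius twist. Throughout I will use the elementary facts that locally nilpotent modules are stable under submodules and directed unions, and that for a \emph{coherent} module local nilpotence coincides with nilpotence.

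First I would reduce to $\cM$ coherent. Writing $\cM=\bigcup_i\cM_i$ as the union of its coherent $W_n$--Cartier submodules, one has $\cM_x=\colim_i(\cM_i)_x$; since local nilpotence is stable under directed unions, $\cM_x\not\sim_C 0$ forces $(\cM_i)_x\not\sim_C 0$ for some $i$. As $\cM_i\subseteq\cM$ and $H^0(U,-)$ is left exact, $H^0(U,\cM_i)$ is a submodule of $H^0(U,\cM)$, so it suffices to treat $\cM_i$; hence assume $\cM$ coherent. Next I would pass to the $p$--torsion submodule $\cM[p]\subseteq\cM$. Because $F(p)=p$, multiplication by $p^i$ is Cartier--linear, so the maps $p^i\colon\cM[p^{i+1}]/\cM[p^i]\hookrightarrow\cM[p]$ are injections of $W_n$--Cartier modules. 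Thus if $\cM[p]\sim_C 0$, i.e.\ $\kappa^a_{\cM[p]}=0$ for some $a$, the same $\kappa^a$ kills every graded piece of the filtration $\cM[p]\subseteq\cM[p^2]\subseteq\cdots\subseteq\cM[p^n]=\cM$, whence $\kappa^{na}_{\cM}=0$ and $\cM\sim_C 0$. Applying this at the stalk gives $\cM[p]_x=(\cM_x)[p]\not\sim_C 0$, and since $\cM[p]$ is a submodule of $\cM$ with $H^0(U,\cM[p])=H^0(U,\cM)[p]$, it is enough to find $U\ni x$ with $H^0(U,\cM[p])\not\sim_C 0$.

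Now $\cM[p]$ is $p$--torsion, so by \autoref{Crys_supported_at_p^m_is_W_m_Cartier} (case $m=1$) it defines an object in the essential image of $\Crys_{X}^{C^r}=\Crys_{W_1X}^{C^r}\to\Crys_{W_nX}^{C^r}$: concretely, for $e\gg 0$ the kernel of $\overline{R^{\,n-1}}\colon W_n\cO_X/p\to\cO_X$ is annihilated by $F^{re}$, so $\cN\coloneqq F^{re}_*\cM[p]$ is an honest $\cO_X$--Cartier module on $X$ over the \emph{perfect} field $k$, and $\cN$ and $\cM[p]$ are linked by nil--isomorphisms (as in the proof of \emph{loc.\ cit.}). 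Its stalk at $x$ remains non--locally--nilpotent, so I would invoke the $n=1$ case of the present statement from \cite{Baudin_Duality_between_perverse_sheaves_and_Cartier_crystals} to obtain an affine open $U\ni x$ with $H^0(U,\cN)\not\sim_C 0$. Finally I would transport this back: a nil--isomorphism of Cartier modules induces a nil--isomorphism on $H^0(U,-)$ (the kernel, the cokernel, and the relevant $H^1$ are all still annihilated by a power of $\kappa$), and the Frobenius twist is harmless because $F$ is an automorphism of $W_n(k)$ for $k$ perfect; hence $H^0(U,\cM[p])\not\sim_C 0$, and the inclusion $H^0(U,\cM[p])=H^0(U,\cM)[p]\subseteq H^0(U,\cM)$ concludes.

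The hard part is entirely contained in the $n=1$ input: upgrading non--nilpotence of a single stalk to the existence of a whole non--locally--nilpotent space of \emph{sections} over a Zariski neighbourhood. This is a genuinely global phenomenon and is where finite type over a \emph{perfect} field is essential — as already illustrated by $\omega_{\bA^1_k}$ in the remark following \autoref{Gabber_finiteness_prop}, the structural morphism can be surjective (so the sheaf is far from nilpotent) while every global section is killed by a power of the Cartier operator, so the surviving section must be produced by spreading out a generic one and the neighbourhood $U$ must be chosen accordingly. A secondary, more routine, point to verify carefully is that the crystal--level identification of \autoref{Crys_supported_at_p^m_is_W_m_Cartier} can indeed be promoted to the comparison of section spaces up to local nilpotence used in the transport step.
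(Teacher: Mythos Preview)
Your reduction steps are correct and mirror exactly what the paper does. The paper's proof of this lemma is the single sentence ``This follows immediately from \autoref{weird_qproj_lemma}'', and the proof of \autoref{weird_qproj_lemma} opens with precisely your two moves: pass to a coherent submodule, then pass to $\cM[p]$ to reduce to $n=1$ (the paper replaces $X$ by $X'=(X,W_n\cO_X/p)$ rather than using your Frobenius twist $F^{re}_*\cM[p]$, but the two devices are equivalent). The transport step you flag as needing care is fine: if $\kappa^a$ kills a coherent Cartier module then it kills every $R^i\pi_*$ of it, so nil-isomorphisms survive $H^0(U,-)$.

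The gap is your black-box citation of the $n=1$ case from \cite{Baudin_Duality_between_perverse_sheaves_and_Cartier_crystals}. That statement does not appear there; the paper proves it \emph{here}, as the bulk of \autoref{weird_qproj_lemma}, by an induction on $d=\dim\Supp_{\crys}(\cM)$. In dimension $0$ one pushes $\cM$ forward from a finite closed subscheme and checks directly. For $d>0$ one argues that for a general $f\in R$ the localization $M\to M_f$ is \emph{not} an lnil-isomorphism (because if it were, \autoref{finiteness_pushforward_open_immersion} and \cite[Remark 5.3.6]{Baudin_Duality_between_perverse_sheaves_and_Cartier_crystals} would force $i_y^!M\sim_C 0$ for all closed $y\in V(f)$, contradicting \cite[Lemmas 2.2.17 and 5.1.10]{Baudin_Duality_between_perverse_sheaves_and_Cartier_crystals}); choosing $f$ also so that $x\notin V(f)$ and so that $M\to M_f$ is injective, the cokernel $N$ has strictly smaller crystalline support and is non-nilpotent, so by induction one finds $V\ni x$ with $H^0(V,N')\not\sim_C 0$, and then the surjection $H^0(V,M_f)\twoheadrightarrow H^0(V,N')$ gives $H^0(D(f)\cap V,M)\not\sim_C 0$. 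This dimension-induction, not anything in the earlier paper, is the genuine content you correctly identify as ``the hard part'' --- your proof stops exactly where the real work begins.
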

\begin{proof}
	This follows immediately from \autoref{weird_qproj_lemma}.
\end{proof}

\begin{rem}
	If we did not work with Cartier crystals but with mere quasi--coherent sheaves, this result would be trivial: take any affine open containing $x$. Here, the situation is much more subtle: if $X = \bA^1$, then a direct explicit computation shows that $H^0(\bA^1, \omega_{\bA^1}) \sim_C 0$, while surely $\omega_{\bA^1} \not\sim_C 0$.
	
	Nevertheless, $H^0(\bA^1 \setminus \{0\}, \omega_{\bA^1}) \not\sim_C 0$, since it contains the logarithmic form $\frac{dx}{x}$ fixed by the Cartier operator.
\end{rem}

\begin{lem}\label{weird_qproj_lemma}
	Assume that $X$ is affine, let $x \in X$ and let $\cM \in \IndCoh_{W_nX}^{C^r}$ such that $\cM \not\sim_C 0$. Then there exists an affine open neighborhood $U$ of $x$ such that $H^0(U, \cM) \not\sim_C 0$. 
\end{lem}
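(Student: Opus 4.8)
The plan is to translate the statement across the duality of \autoref{main_thm_final} into a nonvanishing statement for compactly supported étale cohomology, and then to solve the latter geometrically. First I would make three harmless reductions. Since derived pushforwards and the formation of $H^0(U,-)$ commute with filtered colimits, one reduces to $\cM\in\Coh_{W_nX}^{C^r}$ (a nonzero crystal is witnessed by a coherent subobject, and \autoref{ind_coh_stable_under_everything} handles ind--coherence). The crucial technical point throughout is that on an affine $U$ the functor $H^0(U,-)=\Gamma(U,-)$ is exact on quasi--coherent $W_n$--Cartier modules and sends locally nilpotent modules to locally nilpotent $\overline{k}$--Cartier modules, hence descends to an exact functor on crystals; so the conclusion for a subobject or subquotient of $\cM$ implies it for $\cM$ (as $\LNil$ is a Serre subcategory). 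By faithfully flat base change along $\Spec\overline{k}\to\Spec k$, which preserves and reflects both $\sim_C 0$ and non--local--nilpotence over the base field, I may assume $k=\overline{k}$. Finally, by \autoref{Noeth_and_Art} the crystal $\cM$ has finite length, so $\Gamma(U,-)$ carries a composition series of $\cM$ to a filtration of $H^0(U,\cM)$ with graded pieces $H^0(U,\cM_j)$; it therefore suffices to treat one simple constituent, and I may assume $\cM$ is a nonzero simple $W_n$--Cartier crystal.

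Next I would pass to the topological side. Let $\cP\coloneqq D(\cM)\in\Perv_c(X_{\et},\bZ/p^n\bZ)$ be the nonzero simple perverse sheaf attached to $\cM$ by \autoref{main_thm_final}. For an affine open $U\ni x$, factor $U\to\Spec\overline{k}$ as $j\colon U\hookrightarrow\overline{U}$ followed by a proper morphism $\overline{U}\to\Spec\overline{k}$. Because $U$ is affine, $R\pi_{U,*}(\cM|_U)=H^0(U,\cM)$ is concentrated in degree $0$; applying the (contravariant) duality, which intertwines proper pushforward with itself, restriction to opens with itself, and the open pushforward $Rj_*$ with extension by zero $j_!$ (and using \autoref{finiteness_pushforwards} to see that $Rj_*(\cM|_U)$ again has coherent crystal cohomology), one obtains a natural identification
\[ D\bigl(H^0(U,\cM)\bigr)\;\cong\;R\Gamma_c(U,\cP). \]
Thus $H^0(U,\cM)\not\sim_C 0$ if and only if $R\Gamma_c(U,\cP)\neq 0$, and the lemma reduces to the following purely étale statement: for the nonzero constructible sheaf $\cP$ on the affine $\overline{k}$--variety $X$ and the point $x$, there is an affine open $U\ni x$ with $R\Gamma_c(U,\cP)\neq 0$.

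To prove this geometric statement, write $Z=\supp\cP$, which is integral in the simple case, and choose a dense smooth open $Z^\circ\subseteq Z$ over which $\cP$ restricts to a shifted nonzero lisse sheaf $\cL[\dim Z]$; shrinking $Z^\circ$ I may assume $x\notin Z\setminus Z^\circ$. Taking $U=X\setminus V(f)$ for a function $f$ vanishing on the closed locus $Z\setminus Z^\circ$ (and on the singular locus of $Z$) with $f(x)\neq 0$, which exists since that locus is closed and avoids $x$, gives a distinguished affine $U\ni x$ with $U\cap Z$ smooth affine and
\[ R\Gamma_c(U,\cP)\;\cong\;R\Gamma_c(U\cap Z,\cL)[\dim Z]. \]
It then remains to arrange, by removing from $U\cap Z$ a further hypersurface disjoint from $x$, that the compactly supported cohomology of the nonzero lisse sheaf $\cL$ on the resulting smooth affine variety is nonzero; this is exactly the logarithmic--pole/residue phenomenon illustrated for $\omega_{\bA^1}$ in the remark following \autoref{local_existence_of_ss_sections}, where $R\Gamma_c$ vanishes on $\bA^1$ but not on $\bA^1\setminus\{a\}$.

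I expect this last nonvanishing to be the main obstacle. In contrast to the $\ell$--adic setting, $\bZ/p^n\bZ$--cohomology in characteristic $p$ has no transparent Euler characteristic and $R\Gamma(V,\cG)$ can vanish for nonzero constructible $\cG$ on an affine $V$, so one cannot take an arbitrary large $U$; the nonvanishing must be forced by creating enough ``boundary.'' I would carry this out by dévissage on $\dim Z$, cutting by general hyperplanes passing through the boundary but away from $x$ and controlling the connecting maps in the excision triangles for $R\Gamma_c$, reducing to the zero--dimensional case where the cohomology is a nonzero sum of stalks. An alternative, more hands--on route is to argue entirely on the Cartier side: a simple crystal is generically a unit $W_n$--Cartier module on $Z^\circ$, étale--locally $\omega$ with its standard structure, whose Cartier--fixed sections over the complement of a hypersurface are produced by $d\!\log$/residue forms exactly as $\tfrac{dx}{x-a}$ on $\bA^1\setminus\{a\}$.
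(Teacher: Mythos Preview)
Your translation to the étale side is in principle sound: once one knows that $\Sol\circ\bD$ commutes with restriction to opens, uniqueness of adjoints forces $Rj_*$ on Cartier crystals to correspond to $j_!$ on constructible sheaves, and the identification $D(H^0(U,\cM))\cong R\Gamma_c(U,\cP)$ follows. Note, however, that this compatibility is not established in the paper (only proper pushforward is, in \autoref{comp duality and pushforwards}), so you are implicitly importing an extra ingredient. More importantly, this route is logically delicate here since \autoref{main_thm_final} appears later; it is independent of the present lemma, so there is no circularity, but you should say so explicitly.

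The genuine gap is the last step: you reduce everything to showing that for a nonzero simple perverse $\bZ/p^n\bZ$--sheaf $\cP$ on an affine variety and a point $x$, some affine open $U\ni x$ has $R\Gamma_c(U,\cP)\neq 0$, and then you only sketch a dévissage on $\dim\supp\cP$. That sketch is precisely where all the content lies, and your outline does not actually control the connecting maps or explain why removing a hypersurface produces nonvanishing; the analogy with $\tfrac{dx}{x}$ is suggestive but not a proof for a general lisse $\cL$. In fact, if you unwind your proposed dévissage through the duality, you recover exactly the paper's argument on the Cartier side, so the detour through $R\Gamma_c$ buys nothing.

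The paper's proof is direct and avoids the duality entirely. One first reduces to $\cM$ coherent (as you do) and then to $n=1$ by passing to the $p$--torsion submodule $\cM[p]$, which is a Cartier module on the $\bF_p$--scheme $X'=(X,W_n\cO_X/p)$. One then inducts on $d=\dim\Supp_{\crys}(\cM)$. The base case $d=0$ is handled by pushing forward from a finite set of points. For $d>0$, the key claim is that for a \emph{general} $f\in R$ (with $X=\Spec R$) the localization $M\to M_f$ is not an lnil--isomorphism: indeed, if it were, \autoref{finiteness_pushforward_open_immersion} would force $i_{V(f)}^!M\sim_C 0$, contradicting the fact that $i_y^!M\not\sim_C 0$ at a general closed point $y$ of the crystalline support. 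Choosing such an $f$ with $x\notin V(f)$ and with $M\to M_f$ an lnil--isomorphism at the top--dimensional generic points of $\Supp_{\crys}(\cM)$ yields a short exact sequence $0\to M\to M_f\to N\to 0$ with $N\not\sim_C 0$ and $\dim\Supp_{\crys}(N)<d$. Induction produces an affine open $V\ni x$ with $H^0(V,N)\not\sim_C 0$, and since $V$ is affine the surjection $M_f|_V\twoheadrightarrow N|_V$ (in crystals) gives $H^0(D(f)\cap V,M)=H^0(V,M_f)\not\sim_C 0$.
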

\begin{proof}
	Since $H^0$ preserves injections, we may assume that $\cM$ is coherent. Furthermore, it is enough to show the result for $\cM[p] \inc \cM$, so up to replacing $X$ by $X'$, we may assume that $n = 1$.
	
	We will show this result by induction on $d \coloneqq \dim \Supp_{\crys}(\cM)$ (see \cite[Definition 3.3.11]{Baudin_Duality_between_perverse_sheaves_and_Cartier_crystals}). If $d = 0$, then let us show that $U = X$ works. As in the end of the proof of \cite[Lemma 5.2.4]{Baudin_Duality_between_perverse_sheaves_and_Cartier_crystals}, there exists a finite set of points $Z \subseteq X$ and $\cM' \in \Coh_Z^{C}$ such that $i_{Z, *}\cM' \sim_C \cM$. In particular, $\cM' \not\sim_C 0$, so $H^0(Z, \cM') \not\sim_C 0$ since $Z$ is finite (it is immediate to see that if $h \colon Y \to W$ is a finite morphism of Noetherian $F$--finite schemes and $\cP \in \Coh_Y^C$, then $\cP \sim_C 0$ if and only if $h_*\cP \sim_C 0$). Thus, we obtain that $H^0(X, \cM) \sim_C H^0(Z, \cM') \not\sim_C 0$. 
	
	Assume now that $d > 0$. Set $X = \Spec R$, and let $\cM$ correpond to the $R$--module $M$. For $f \in R$, set $V(f) = \Spec(R/f) \subseteq X$ and $D(f) = X \setminus V(f)$.  \\
	
	We claim that for general $f \in R$ (i.e. $V(f)$ is not contained in some prescribed proper closed subset of $\Spec(R)$), the localization map $M \to M_f$ is not an lnil--isomorphism.
	
	Let us prove this claim. For any $f \in R$ such that $M \to M_f$ is an lnil--isomorphism, then \autoref{finiteness_pushforward_open_immersion}.\autoref{cool_finiteness} and \cite[Remark 5.3.6]{Baudin_Duality_between_perverse_sheaves_and_Cartier_crystals} show that $i_{V(f)}^!M \sim_C 0$. In particular, for all closed point $y \in V(f)$, we have that $i_y^!M \sim_C 0$. However, we know that for $i_y^!M \not\sim_C 0$ for a general closed point $y \in \Supp_{\crys}(\cM)$ (this is a consequence of \cite[Lemmas 2.2.17 and 5.1.10]{Baudin_Duality_between_perverse_sheaves_and_Cartier_crystals}). Hence, we obtain a contradiction since $f$ is general, so the claim is proven. \\
	
	Let $f \in R$ be an element such that the natural map $M \to M_f$ is injective, that $x \notin V(f)$, that the map $M \to M_f$ is a lnil--isomorphism at all generic points of $\Supp_{\crys}(\cM)$ of maximal dimension, and that the assertion of the above claim holds. We then have a short exact sequence \[ 0 \to M \to M_f \to N \to 0 \] with $N \not\sim_C 0$. Since $M_f$ is lnil-isomorphic to a coherent Cartier module by \autoref{finiteness_pushforward_open_immersion}.\autoref{cool_finiteness}, we obtain the existence of some $N' \in \Coh_X^{C^r}$ such that $N' \sim_C N$. Note that by the choice of $f$, we obtain that $\dim \Supp_{\crys}(N') < d$ and that $N' \not\sim_C 0$.
	
	By the induction hypothesis, there exists an affine open $V \inc X$ containing $x$ such that $H^0(V, N') \not\sim_C 0$. Since $M_f|_V$ surjects (as crystals) onto $N'|_V$ and $V$ is affine, we deduce that also $H^0(D(f) \cap V, M) = H^0(V, M_f) \surj H^0(V, N') \not\sim_C 0$, so the proof is complete.
\end{proof}
\begin{rem}
	The same proof works verbatim when $k$ is only assumed to be $F$--finite.
\end{rem}

Let us now define a solution functor on $W_n$--Cartier modules, analogous to that on $W_n$--Frobenius modules. First of all, let us see how to extend a $W_n$--Cartier module to the étale site.

\begin{constr}\label{extending_Cartier_modules_etale_site}
	Let $f \colon U \to X$ be an étale morphism of Noetherian and $F$--finite schemes. Then by \cite[Corollary A.18]{Langer_Zink_De_Rham_Witt_cohomology_for_a_proper_and_smooth_morphism}, the square \[ \begin{tikzcd}
		W_nX \arrow[r, "f"] \arrow[d, "F"'] & W_nY \arrow[d, "F"] \\
		W_nX \arrow[r, "f"'] & W_nY
	\end{tikzcd} \] is Cartesian. By flat base change, we deduce that for any $\cM \in \QCoh_{W_nX}$, the natural morphism $f^*F_*\cM \to F_*f^*\cM$ is an isomorphism. Hence, we naturally obtain a pullback functor $f^* \colon \QCoh_{W_nX}^{C^r} \to \QCoh_{W_nU}^{C^r}$. In other words, any quasi--coherent $W_n$--Cartier module can be extended to the étale site. We denote this construction as $\cM \mapsto \cM^{\et}$. Note that again by \cite[Corollary A.18]{Langer_Zink_De_Rham_Witt_cohomology_for_a_proper_and_smooth_morphism}, this construction commutes with the inclusions $\QCoh_{W_mX}^{C^r} \inj \QCoh_{W_nX}^{C^r}$ for $1 \leq m \leq n$.
\end{constr}

\begin{defn}
	Let $(\cM, \kappa_{\cM}) \in \QCoh_{W_nX}^{C^r}$. We define $\Sol(\cM) \coloneqq \ker(\kappa_{\cM}^{\et} - 1 \colon\cM^{\et} \to \cM^{\et}) \in \Sh(X_{\et}, W_n(\bF_q))$.
\end{defn}

Let us establish some properties of this functor.

\begin{prop}\label{C - 1 surjective}
	Let $(\cM, \kappa_{\cM})$ be an ind--coherent $W_n$--Cartier module on $X$. Then the morphism $\kappa_{\cM}^{\et} - 1 \colon \cM^{\et} \to \cM^{\et}$ is surjective on the étale site of $W_n(\bF_q)$--sheaves.
\end{prop}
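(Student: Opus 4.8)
The plan is to reduce to the case $n=1$ by dévissage along the $p$--adic filtration, using that surjectivity of an endomorphism of étale sheaves is stable under extensions. Concretely, if $0 \to \cA \to \cB \to \cC \to 0$ is a short exact sequence in $\Sh(X_{\et}, W_n(\bF_q))$ and $\phi$ is a compatible triple of endomorphisms $\phi_{\cA}, \phi_{\cB}, \phi_{\cC}$ of the three terms, then the snake lemma produces an exact sequence $0 \to \coker(\phi_{\cA}) \to \coker(\phi_{\cB}) \to \coker(\phi_{\cC}) \to 0$; hence if $\phi$ is surjective on $\cA$ and on $\cC$, it is surjective on $\cB$. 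I will apply this with $\phi = \kappa_{\cM}^{\et} - 1$.

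First I would observe that the filtration $\cM \supseteq p\cM \supseteq \dots \supseteq p^n\cM = 0$ is a filtration by $W_n$--Cartier submodules. Since $F^r(p) = p$ and $\kappa_{\cM}$ is $W_n\cO_X$--linear as a map $F^r_*\cM \to \cM$, one gets $\kappa_{\cM}(p^i \cdot m) = p^i\kappa_{\cM}(m)$, so $\kappa_{\cM}$ preserves each $p^i\cM$ and therefore $\kappa_{\cM}^{\et} - 1$ respects the induced filtration of $\cM^{\et}$. Each subquotient $p^i\cM/p^{i+1}\cM$ is a quotient of $\cM$, hence ind--coherent by \autoref{ind_coh_stable_under_everything}, and it is killed by $p$, so it is an ind--coherent Cartier module on the Noetherian $F$--finite $\bF_p$--scheme $X' = (X, W_n\cO_X/p)$. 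Because the extension functor $(-)^{\et}$ of \autoref{extending_Cartier_modules_etale_site} is exact, applying it to the filtration yields a finite filtration of $\cM^{\et}$ whose graded pieces are the étale extensions of these Cartier modules over $X'$.

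Now I would invoke the case $n=1$: for an ind--coherent ($r$--)Cartier module over a Noetherian $F$--finite $\bF_p$--scheme, the operator $\kappa^{\et}-1$ is surjective on the étale site. This is the corresponding statement in \cite{Baudin_Duality_between_perverse_sheaves_and_Cartier_crystals}, applied here to $X'$, whose small étale site is identified with that of $X$. It gives surjectivity of $\kappa_{\cM}^{\et} - 1$ on each graded piece $(p^i\cM/p^{i+1}\cM)^{\et}$. By descending induction on $i$ — the base case $p^n\cM = 0$ being trivial — the extension stability recalled in the first paragraph propagates surjectivity from $(p^{i+1}\cM)^{\et}$ and $(p^i\cM/p^{i+1}\cM)^{\et}$ to $(p^i\cM)^{\et}$; taking $i = 0$ yields the claim.

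The only genuine input is the $n=1$ case, which carries all the arithmetic content (solving Artin--Schreier--Witt type equations étale--locally); the passage to general $n$ is the formal dévissage above, so I do not expect a serious obstacle. The one point requiring a little care is that ind--coherence of the graded pieces must be read simultaneously as $W_n$--Cartier modules and as Cartier modules on $X'$ — these agree because a coherent $W_n\cO_X$--submodule killed by $p$ is precisely a coherent $\cO_{X'}$--submodule (this is exactly the mechanism behind \autoref{indcoh_iff_indcoh_mod_p}) — together with the exactness of $(-)^{\et}$, which is what transports the filtration to the étale site and makes its graded pieces the étale extensions of the $X'$--Cartier modules.
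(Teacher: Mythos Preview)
Your reduction to $n=1$ via the $p$-adic filtration and the snake lemma is correct, and is essentially the same d\'evissage the paper performs (there via the $p$-torsion filtration $\cM[p] \subseteq \cM$, which is only cosmetically different). Where you diverge is the $n=1$ base case: you cite it from \cite{Baudin_Duality_between_perverse_sheaves_and_Cartier_crystals}, whereas the paper does not --- it instead first reduces to a point by pushing forward along $\pi \colon X \to \Spec k$ (ind-coherence is preserved by \autoref{finiteness_pushforwards}), passes \'etale-locally to $k$ algebraically closed, reduces to a coherent module, and then splits off the maximal nilpotent submodule $\cN$ (on which $\kappa-1$ has the explicit inverse $-(1+\kappa+\cdots+\kappa^{e-1})$) from a quotient $\cV$ with bijective $\kappa$, which over an algebraically closed field decomposes as copies of $(k,F^r)$ by \cite{Mumford_Abelian_Varieties} and is handled by bare Artin--Schreier. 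Your route is shorter and more modular provided the $n=1$ statement is genuinely available in the cited reference in the Noetherian $F$-finite generality you invoke; the paper's decision to reprove it rather than cite it, together with the remark immediately following the proposition, suggests some caution here. Note also that the reduction to a point is precisely where the standing hypothesis ``$X$ of finite type over a perfect field'' enters the paper's argument --- so if your citation does hold, your proof would actually work in the broader $F$-finite setting the remark alludes to (one checks easily that $X'$ is again of finite type over $k$ when $k$ is perfect, so even absent the citation you could feed $X'$ back into the paper's own $n=1$ argument).
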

\begin{rem}
	This surely holds over $F$--finite fields in general, but we do not need such generality for our application (also our restriction makes the proof easier).
\end{rem}
\begin{proof}
	Let $\cM \in \IndCoh_{W_nX}^{C^r}$. Since we may assume that $X$ is affine, it enough to show the result for $\pi_*\cM$, where $\pi \colon X \to \Spec k$ denotes the structural map of $X$. In other words, by \autoref{finiteness_pushforwards}, we may assume that $X = \Spec k$. Since the question is étale local, we may assume that $k$ is algebraically closed.
	
	Using the exact sequences \[ 0 \to \cM[p] \to \cM \to \cM/\cM[p] \to 0, \] and induction, it is enough to show the result when $\cM$ is $p$--torsion. In other words, we may assume that $n = 1$. We may also asusme that $\cM$ is coherent. Let $\cN \inc \cM$ be the biggest nulpotent sub--Cartier module of $\cM$, and consider the exact sequence \[ 0 \to \cN \to \cM \to \cV \to 0 \] where $\cV \coloneqq \cM/\cN$. It is then enough to show the result for $\cN$ and $\cV$.
	
	\begin{itemize}
		\item Since $\cN$ is nilpotent (say $\kappa_{\cN}^e = 0$), then there is an explicit inverse of $\kappa_{\cN} - 1$, namely $-(1 + \kappa_{\cN} + \dots + \kappa_{\cN}^{e - 1})$. Hence, $\kappa_{\cN} - 1$ is even bijective. 
	
		\item Note that $\cV$ has no nilpotent sub--Cartier module, so the morphism $\cV \to F^{\flat}\cV$ is automatically injective. Since we work over a field, it must be an isomorphism, so we deduce that also $\kappa_{\cV} \colon F^r_*\cV \to \cV$ is also bijective ($k$ is perfect). Since $\kappa_{\cV} - 1 = \kappa_{\cV} \circ (1 - \kappa_{\cV}^{-1})$, it is enough to show that $1 - \kappa_{\cV}^{-1}$ is bijective. Now, $\kappa_{\cV}^{-1} \colon \cV \to F^r_*\cV$ defines a perfect coherent Frobenius module over an algebraically closed field, so we know by \cite[Corollary p.143]{Mumford_Abelian_Varieties} that it is isomorphic to the Frobenius module $(k, (\cdot)^q)^{\oplus n}$ for some $n \geq 0$. Thus, we can verify the result by hand using that $k$ is separably closed. \qedhere
	\end{itemize}
\end{proof}

\begin{cor}\label{properties of Sol}
	The functor $\: \Sol \colon \IndCoh_{W_nX}^{C^r} \to \Sh_{\et}(X, W_n(\bF_q))$ is exact and factors through $\IndCrys_{W_nX}^{C^r}$. Furthermore, for any $\cM \in \IndCrys_{W_nX}^{C^r}$, integer $i \geq 0$ and separated morphism $f \colon X \to Y$ of finite type schemes over $k$, we have an isomorphism \[ R^if_*(\Sol(\cM)) \cong \Sol(R^if_*(\cM)).  \] 
\end{cor}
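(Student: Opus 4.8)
The plan is to distill from \autoref{C - 1 surjective} a single short exact sequence of étale $W_n(\bF_q)$--sheaves and then feed it into the snake lemma and a Leray spectral sequence. Concretely, for any $\cM \in \IndCoh_{W_nX}^{C^r}$ the surjectivity in \autoref{C - 1 surjective}, combined with the very definition of $\Sol$, yields the exact sequence
\[ 0 \to \Sol(\cM) \to \cM^{\et} \xrightarrow{\kappa_{\cM}^{\et} - 1} \cM^{\et} \to 0, \]
which I will call $(\star)$; here $\cM^{\et}$ is the étale extension of \autoref{extending_Cartier_modules_etale_site}. This sequence is the backbone of all three assertions.

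For exactness, given a short exact sequence $0 \to \cM' \to \cM \to \cM'' \to 0$ in $\IndCoh_{W_nX}^{C^r}$, I would apply $(\cdot)^{\et}$ (which is exact, being a flat pullback) to obtain a commutative ladder whose two rows are copies of $(\star)$ and whose vertical maps are $\kappa^{\et}-1$. The snake lemma produces
\[ 0 \to \Sol(\cM') \to \Sol(\cM) \to \Sol(\cM'') \to \coker(\kappa_{\cM'}^{\et}-1) \to \cdots, \]
and the last term vanishes by \autoref{C - 1 surjective} applied to the ind--coherent module $\cM'$; hence $\Sol$ is exact. To factor $\Sol$ through $\IndCrys_{W_nX}^{C^r} = \IndCoh_{W_nX}^{C^r}/\LNil$ it now suffices to check that it kills $\LNil$. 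On a nilpotent module $\kappa^{\et}-1$ is invertible (the explicit inverse $-(1 + \kappa^{\et} + \cdots)$ from the proof of \autoref{C - 1 surjective}), so its kernel is zero; since both $(\cdot)^{\et}$ and the formation of kernels commute with filtered colimits, $\Sol$ vanishes on every locally nilpotent (hence ind--coherent, by \autoref{loc_nilp_is_ind_coh}) module, and an exact functor annihilating the Serre subcategory $\LNil$ descends to the quotient by \stacksproj{06XK}.

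For the compatibility with pushforward, I would apply $Rf_*$ to $(\star)$ and read off the long exact sequence
\[ \cdots \to R^{i}f_* \cM^{\et} \xrightarrow{R^i f_*(\kappa^{\et}-1)} R^{i}f_* \cM^{\et} \to R^{i+1}f_* \Sol(\cM) \to \cdots. \]
The crucial input is the comparison isomorphism $R^i f_*(\cM^{\et}) \cong (R^i f_* \cM)^{\et}$ between the étale pushforward of the étale extension and the étale extension of the (Witt--)quasi--coherent pushforward, together with its compatibility with the structural morphism, i.e. $R^i f_*(\kappa_{\cM}^{\et}) = \kappa_{R^i f_* \cM}^{\et}$. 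Granting this, \autoref{finiteness_pushforwards} guarantees that each $R^i f_* \cM$ is ind--coherent, so \autoref{C - 1 surjective} makes $\kappa_{R^{i-1}f_*\cM}^{\et}-1$ surjective; the connecting maps in the long exact sequence then vanish and it collapses to
\[ 0 \to R^i f_* \Sol(\cM) \to (R^i f_* \cM)^{\et} \xrightarrow{\kappa^{\et}-1} (R^i f_* \cM)^{\et}, \]
which exhibits $R^i f_* \Sol(\cM)$ as $\ker(\kappa_{R^if_*\cM}^{\et}-1) = \Sol(R^i f_* \cM)$.

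I expect the main obstacle to be precisely this comparison isomorphism and its compatibility with $\kappa$: it rests on the agreement of étale and Zariski cohomology for quasi--coherent sheaves (transported along the universal homeomorphism $X \inj W_nX$), and on flat base change making the absolute Frobenius commute with $f$. This is exactly the point where the argument mirrors, rather than reduces to, the case $n=1$, and the functoriality bookkeeping for $\kappa^{\et}$ under pushforward is where the real care is needed.
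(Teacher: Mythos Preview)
Your argument is correct, and for exactness and the factorization through $\IndCrys_{W_nX}^{C^r}$ it is essentially identical to the paper's proof: both use the snake lemma on the Artin--Schreier--Witt sequence $(\star)$ together with \autoref{C - 1 surjective}, and both observe that $\kappa^{\et}-1$ is bijective on (locally) nilpotent modules.

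For the compatibility with pushforward, the paper takes a different route. Rather than applying $Rf_*$ to $(\star)$ for a general $\cM$ and breaking the long exact sequence using surjectivity of $\kappa^{\et}-1$ on each $R^if_*\cM$, the paper argues via acyclicity: since $\Sol$ and $f_*$ commute on the nose and $\Sol$ is exact, it suffices to check that $\Sol(\cI)$ is $f_*$--acyclic for $\cI$ injective in $\IndCoh_{W_nX}^{C^r}$. By \autoref{inj_ind_coh_is_inj_qcoh} such an $\cI$ is injective in $\QCoh_{W_nX}$, so $\cI^{\et}$ is $f_*$--acyclic on the étale site; then the sequence $(\star)$ for $\cI$ together with surjectivity of $\kappa^{\et}-1$ on $f_*\cI^{\et}\cong (f_*\cI)^{\et}$ gives $R^if_*\Sol(\cI)=0$ for $i>0$. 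The advantage of the paper's approach is that it sidesteps exactly the point you flag as delicate: the naturality of the comparison $R^if_*(\cM^{\et})\cong (R^if_*\cM)^{\et}$ for the \emph{additive} map $\kappa^{\et}-1$ is never needed beyond degree zero, since the higher pushforwards of $\cI^{\et}$ simply vanish. Your approach is more direct and avoids injectives, but you do have to pay the bookkeeping cost you identified; it goes through because $\kappa^{\et}-1$ is a difference of two maps each coming from a morphism of quasi--coherent sheaves, and $R^if_*$ is additive.
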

\begin{proof}
	We first start by showing that $\Sol$ is exact, so consider an exact sequence \[ \begin{tikzcd}
		0 \arrow[r] & \cM_1 \arrow[r] & \cM_2 \arrow[r] & \cM_3 \arrow[r] & 0
	\end{tikzcd} \] of ind--coherent $W_n$--Cartier modules. Then we have a comutative diagram of exact sequences \[ \begin{tikzcd}
		0 \arrow[r] & \cM_1^{\et} \arrow[r] \arrow[d, "\kappa - 1"', no head] & \cM_2^{\et} \arrow[r] \arrow[d, "\kappa - 1"'] & \cM_3^{\et} \arrow[r] \arrow[d, "\kappa - 1"'] & 0 \\
		0 \arrow[r] & \cM_1^{\et} \arrow[r]                                   & \cM_2^{\et} \arrow[r]                          & \cM_3^{\et} \arrow[r]                         & 0,
	\end{tikzcd}  \] so we deduce that $\Sol$ is exact by \autoref{C - 1 surjective} and the snake lemma.
	
	We already showed in the proof of \autoref{C - 1 surjective} that $\kappa - 1$ is bijective on a nilpotent Cartier module, and the same proof shows that this still holds for a $W_n$--Cartier module. Hence, this also holds for a locally nilpotent Cartier module, so we obtain that $\Sol(\cM) = 0$ for any $\cM \in \LNil$. We obtain our sought factorization by \stacksproj{02MS}.
	
	Let us finish with the commutativity property with higher pushforwards. The fact that $\Sol$ and $f_*$ commute follow immediately from the definitions. By exactness of $\Sol$, it is enough to show that if $\cI$ in injective in $\IndCoh_{W_nX}^{C^r}$, then $\Sol(\cI)$ is $f_*$--acyclic (we are implicitly using \autoref{finiteness_pushforwards}).
	
	Since $\cI$ is injective in $\QCoh_{W_nX}$ by \autoref{inj_ind_coh_is_inj_qcoh}, we know that $\cI^{\et}$ is acyclic for $f_* \colon \Sh_{\et}(X) \to \Sh_{\et}(Y)$ by \cite[Proposition III.3.8]{Milne_Etale_Cohomology}. Given that we have a short exact sequence of étale sheaves \[ \begin{tikzcd}
		0 \arrow[r] & \Sol(\cI) \arrow[r] & \cI^{\et} \arrow[r, "\kappa - 1"] & \cI^{\et} \arrow[r] & 0,
	\end{tikzcd} \] and that $\kappa - 1 \colon f_*\cI^{\et} \to f_*\cI^{\et}$ is also surjective (use \autoref{finiteness_pushforwards} and \autoref{C - 1 surjective}), we deduce that $R^if_*\Sol(\cI) = 0$ for all $i > 0$.
\end{proof}

	\section{The duality}
	
 	Fix a Noetherian $F$--finite $\bF_p$--scheme $X$.
	
	\subsection{Pairings, coherent duality and compatibilites}
	
	\begin{notation}\label{notation_iota}
		Throughout, we denote by $\iota$ either of the forgetful functor $\IndCoh_{W_nX}^{C^r, \unit} \to \QCoh_{W_nX}$ or $\IndCoh_{W_nX}^{C^r, \unit} \to \Mod(W_n\cO_X)$ (recall that it is only left--exact in general).
	\end{notation}

	\begin{constr}\label{construction_pairings}
		\begin{itemize}
			\item Let $\Mcal \in \Mod(W_n\Ocal_X[F^r])$ and $\Ncal \in \Mod(W_n\Ocal_X[C^r])$. Define the following $W_n$--Cartier module structure on $\HHom_{W_n\cO_X}(\Mcal, \Ncal)$: \[ \left(f \colon \Mcal|_U \ra \Ncal|_U \right) \mapsto \left(\kappa_{\cN} \circ F^r_*f \circ \tau_{\cM} \colon \Mcal|_U \ra \Ncal|_U \right) \] for all opens $U \inc X$. Due to the presence of $F^r_*f$ in the formula, this morphism is indeed $q^{-1}$-linear. We will often drop the symbol $W_n\cO_X$ is $\HHom_{W_n\cO_X}(-, -)$.
			\item Let $\Mcal \in \QCoh_{W_nX}^{C^r}$ and $\Ncal \in \QCoh_{W_nX}^{C^r, \unit}$. Define the following $W_n$--Frobenius module structure on $\HHom(\Mcal, \Ncal)$: 
			\[ \left(f \colon \Mcal|_U \ra \Ncal|_U \right) \mapsto \left((\kappa_{\cN}^{\flat})^{-1} \circ F^{r, \flat}f \circ \kappa_{\cM}^{\flat} \colon \Mcal|_U \ra \Ncal|_U \right) \] for all opens $U \inc X$. Due to the presence of $F^{r, \flat}f$ in the formula, this morphism is indeed $q$-linear.
		\end{itemize}
	\end{constr}

	\begin{rem}
		The first pairing generalizes \autoref{case_pairing}, where the $W_n$--Frobenius module is $W_n\cI$ with $\cI$ a quasi--coherent ideal.
	\end{rem}
	
	\begin{prop}\label{pairings}
		\begin{enumerate}
			\item\label{restr_pairing} The two pairings defined above gives rise to functors \[ \HHom(-, -) \colon (\Coh_{W_nX}^{F^r})^{op} \times \IndCoh_{W_nX}^{C^r} \ra \QCoh_{W_nX}^{C^r} \] and \[ \HHom(-, -) \colon (\Coh_{W_nX}^{C^r})^{op} \times \IndCoh_{W_nX}^{C^r, \unit} \ra \QCoh_{W_nX}^{F^r}. \]
			\item\label{derived_pairing} Taking derived functors gives rise to functors \[ \cR\HHom(-, -) \colon D^-(\Coh_{W_nX}^{F^r})^{op} \times D^+(\QCoh_{W_nX}^{C^r}) \ra D(\QCoh_{W_nX}^{F^r}) \] and \[ \cR\HHom(-, -) \colon D^-(\Coh_{W_nX}^{C^r})^{op} \times D^+(\IndCoh_{W_nX}^{C^r, \unit}) \ra D(\IndCoh_{W_nX}^{C^r}) \] which are triangulated when fixing either variables.
			\item\label{commutativity_with_qcoh} The diagrams \[ \begin{tikzcd}
				D^-(\Coh_{W_nX}^{F^r})^{op} \times D^+(\IndCoh_{W_nX}^{C^r}) \arrow[rr, "\Rcal\HHom"] \arrow[d] &  & D(\QCoh_{W_nX}^{C^r})  \arrow[d] \\
				D(W_n\Ocal_X)^{op} \times D^+(W_n\Ocal_{X}) \arrow[rr, "\Rcal\HHom"']                                &  & D(W_n\Ocal_{X})                           
			\end{tikzcd} \] and \[ \begin{tikzcd}
			D^-(\Coh_{W_nX}^{C^r})^{op} \times D^+(\IndCoh_{W_nX}^{C^r, \unit}) \arrow[rr, "\Rcal\HHom"] \arrow[d] &  & D(\QCoh_{W_nX}^{F^r})  \arrow[d] \\
			D(W_n\Ocal_{X})^{op} \times D^+(W_n\Ocal_{X}) \arrow[rr, "\Rcal\HHom"']                                &  & D(W_n\Ocal_{X})                           
		\end{tikzcd} \] are commutative, where the arrow $\IndCoh_{W_nX}^{C^r, \unit} \to D(W_n\cO_{X})$ in the last diagram is $R\iota$.
		\end{enumerate}
	\end{prop}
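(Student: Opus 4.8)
The plan is to reduce everything to the verifications already carried out in the case $n = 1$ in \cite{Baudin_Duality_between_perverse_sheaves_and_Cartier_crystals}, the point being that every ingredient those arguments rely on has now been re--established in the $W_n$--setting.

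For \autoref{restr_pairing} I would first check that the formulas of \autoref{construction_pairings} really define the asserted structures. For the first pairing, the assignment $f \mapsto \kappa_{\cN} \circ F^r_*f \circ \tau_{\cM}$ is additive and, because of the twist introduced by $F^r_*f$, is $q^{-1}$--semilinear, so it makes $\HHom(\cM, \cN)$ a $W_n$--Cartier module; for the second pairing the hypothesis that $\cN$ be unit guarantees that $\kappa_{\cN}^{\flat}$ is invertible, so $f \mapsto (\kappa_{\cN}^{\flat})^{-1} \circ F^{r, \flat}f \circ \kappa_{\cM}^{\flat}$ is a $q$--semilinear endomorphism and defines a $W_n$--Frobenius structure. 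Quasi--coherence of the underlying sheaf is automatic since the first variable is coherent and the second is quasi--coherent, and functoriality in both variables is immediate from the formulas. This is the verbatim $W_n$--analogue of the corresponding check in \emph{loc. cit.}

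For \autoref{derived_pairing} I would derive in the second variable. Fixing a coherent first argument, $\HHom(\cM, -)$ is left--exact on the relevant category, which is Grothendieck with enough injectives by \autoref{all cats of Cartier mods are Grothendieck} and \autoref{all cats of F-mods are Grothendieck}, so $\cR\HHom(\cM, -)$ exists and is computed by injective resolutions. Plugging in a bounded--above complex in the first variable against a bounded--below complex of injectives in the second, the total Hom complex is degreewise finite and yields the stated hyper--derived bifunctor; triangulatedness upon fixing either variable is the usual derived--bifunctor formalism. Again this is identical to the case $n = 1$.

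The real content is \autoref{commutativity_with_qcoh}, and the key input is \autoref{injectives_everywhere}: an injective object of $\IndCoh_{W_nX}^{C^r}$ (resp. of $\IndCoh_{W_nX}^{C^r, \unit}$) is injective as a $W_n\cO_X$--module. To compute $\cR\HHom(\cM, \cN)$ I would replace $\cN$ by an injective resolution $\cI^\bullet$ in the appropriate Cartier/Frobenius category; by \autoref{injectives_everywhere} the forgotten complex consists of injective $W_n\cO_X$--modules, so it simultaneously computes $\cR\HHom_{W_n\cO_X}$, which is precisely the commutativity of the two squares. The one delicate point I anticipate is the unit case: there $\iota$ is only left--exact, so the right--hand vertical map is $R\iota$, and I would use \autoref{injectives_everywhere} once more to see that injective unit $W_n$--Cartier modules are $\iota$--acyclic, whence $R\iota\cN$ is indeed computed by the same resolution and no discrepancy occurs. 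Once \autoref{injectives_everywhere} is granted, this is a purely formal consequence and reduces entirely to the argument of \cite{Baudin_Duality_between_perverse_sheaves_and_Cartier_crystals}.
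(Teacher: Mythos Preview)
Your proposal is correct and follows exactly the paper's approach: part \autoref{restr_pairing} is immediate from the definitions, part \autoref{derived_pairing} is formal, and part \autoref{commutativity_with_qcoh} follows from \autoref{injectives_everywhere}. The paper's proof is simply a terser version of what you wrote, and your identification of the one subtle point (that $\iota$ is only left--exact in the unit case, so one needs injectives in $\IndCoh_{W_nX}^{C^r,\unit}$ to be $\iota$--acyclic, which again comes from \autoref{injectives_everywhere}) is exactly right.
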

	\begin{proof}
		The first statement is immediate. The second statement is a formal consequence of the first one, and the last statement is a consequence of \autoref{injectives_everywhere}.
	\end{proof}

	Let $1 \leq m \leq n$ be an integer, and let $i_m \colon W_mX \inj W_nX$ denote the associated closed immersion. Then we have the usual pushforward functor, but recall that we also have the functor $i_m^{\flat} \colon \QCoh_{W_nX}^{C^r} \to \QCoh_{W_mX}^{C^r}$, since $i_m$ commutes with Frobenii (see a quasi--coherent $W_n$--Cartier module as the pair $(\cN, \kappa_{\cN}^{\flat})$ and apply $i_m^{\flat}$ to $\kappa_{\cN}^{\flat}$). Then $i_m^{\flat}$ is right adjoint to $i_{m, *}$ and by construction, this preserves unit $W_n$--Cartier modules.

	\begin{lem}\label{compatibility_different_n}
		Let $1 \leq m \leq n$ be an integer. Then for any $\cM^{\bullet} \in D^-(\Coh_{W_mX}^{F^r})$ and $\cN^{\bullet} \in D^+(\IndCoh_{W_nX}^{C^r})$, there is a natural isomorphism \[ i_{m, *}\cR\HHom_{W_m\cO_X}(\cM^{\bullet}, i_m^!\cN^{\bullet}) \cong \cR\HHom_{W_n\cO_X}(i_{m, *}\cM^{\bullet}, \cN^{\bullet}) \] in $D(\IndCoh_{W_nX}^{C^r})$.
	\end{lem}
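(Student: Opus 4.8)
The plan is to deduce the statement from the classical sheaf--$\HHom$ duality for the finite morphism $i_m$, upgraded so as to respect the Frobenius/Cartier structures, and then to derive it by the usual injective--resolution argument. First I would record the underlying, structure--free isomorphism. Since $i_m$ is finite (indeed a closed immersion, see \autoref{general_statements_Witt_vectors}), the adjunction $i_{m, *} \dashv i_m^{\flat}$ of \stacksproj{08YP} sheafifies to a natural isomorphism of $W_n\cO_X$--modules
\[ i_{m, *}\HHom_{W_m\cO_X}(\cM, i_m^{\flat}\cN) \xrightarrow{\ \sim\ } \HHom_{W_n\cO_X}(i_{m, *}\cM, \cN), \]
sending $f$ to the composite of $i_{m, *}f$ with the counit $i_{m, *}i_m^{\flat}\cN \to \cN$. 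Concretely, one uses that $i_m^{\flat}\cN$ is the $\ker(R^{n - m})$--torsion submodule of $\cN$ and that every $W_n\cO_X$--linear map out of $i_{m, *}\cM$ automatically lands in this torsion, because $\ker(R^{n - m})$ kills $\cM$.

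Next I would check that this isomorphism is one of $W_n$--Cartier modules for the pairing of \autoref{construction_pairings}. The only inputs are that the Frobenius structure on $i_{m, *}\cM$ is $i_{m, *}\tau_{\cM}$ (which makes sense precisely because $i_m$ commutes with $F^r$, so $i_{m, *}F^r_* = F^r_* i_{m, *}$), and that the Cartier structure $\kappa_{i_m^{\flat}\cN}$ is the corestriction of $\kappa_{\cN}$ to the $\ker(R^{n - m})$--torsion, again using that $F^r$ preserves this torsion. Unwinding the defining formula $f \mapsto \kappa \circ F^r_* f \circ \tau$ on both sides, the isomorphism intertwines the two structures because the counit $i_{m, *}i_m^{\flat}\cN \to \cN$ commutes with $\kappa$ by the very definition of $\kappa_{i_m^{\flat}\cN}$. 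This is a direct diagram chase with no surprises.

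Finally I would pass to the derived categories. Fixing $\cM^{\bullet}$, both sides are triangulated, way--out functors of $\cN^{\bullet} \in D^+(\IndCoh_{W_nX}^{C^r})$, so by a dévissage identical to the one in \cite{Baudin_Duality_between_perverse_sheaves_and_Cartier_crystals} it suffices to compare them on a single injective object $\cI$ of $\IndCoh_{W_nX}^{C^r}$. By \autoref{injectives_everywhere}, $\cI$ is injective as a $W_n\cO_X$--module; hence it is acyclic for $i_m^{\flat}$ (its higher derived functors being $\EExt^i_{W_n\cO_X}(i_{m, *}\cO_{W_mX}, -)$, which vanish on injectives), so $i_m^!\cI = i_m^{\flat}\cI$, and $i_m^{\flat}\cI$ is injective as a $W_m\cO_X$--module since $i_m^{\flat}$ is right adjoint to the exact functor $i_{m, *}$. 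Consequently $\HHom_{W_m\cO_X}(-, i_m^{\flat}\cI)$ and $\HHom_{W_n\cO_X}(-, \cI)$ are exact and compute the respective $\cR\HHom$'s, so both sides reduce to the non--derived isomorphism established above (applied termwise), with the exact functor $i_{m, *}$ passing through. I expect the main obstacle to be purely bookkeeping: ensuring the several acyclicities just listed hold simultaneously so that no spurious higher derived terms appear, the structure--compatibility check itself being routine.
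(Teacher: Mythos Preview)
Your proposal is correct and follows essentially the same approach as the paper: reduce to the underived statement via injective resolutions, construct the non--derived map through the counit $i_{m,*}i_m^{\flat}\cN \hookrightarrow \cN$, and verify that it respects the Cartier structures. The paper's write--up is slightly more direct: since $i_m^{\flat}$ is right adjoint to the exact functor $i_{m,*}$ already at the level of $\IndCoh^{C^r}$, it sends an injective resolution of $\cN^{\bullet}$ in $\IndCoh_{W_nX}^{C^r}$ to one in $\IndCoh_{W_mX}^{C^r}$, so both sides are computed by the same termwise formula without any way--out/d\'evissage argument and without passing through $W_n\cO_X$--module injectivity (your appeal to \autoref{injectives_everywhere} is harmless but not needed, since what you actually use is injectivity in the Cartier--module category, which the same right--adjoint--to--exact argument already gives).
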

	\begin{proof}
		Since $i_m^{\flat}$ is right adjoint to an exact functor, it preserves injective objects. Hence, by definition of $\cR\HHom$, we have to show that for $\cM \in \Coh_{W_nX}^{F^r}$ and $\cN \in \IndCoh_{W_nX}^{C^r}$, we have a natural isomorphism \[ i_{m, *}\HHom_{W_m\cO_X}(\cM, i_m^{\flat}\cN) \cong \HHom_{W_n\cO_X}(i_{m, *}\cM, \cN). \] 
		Consider the composition \[ i_{m ,*}\HHom(\cM, i_m^{\flat}\cM) \to \HHom(i_{m, *}\cM, i_{m, *}i_m^{\flat}\cN) \to \HHom(i_{m, *}\cM, \cN), \] where the first map is the natural map, and the second map is given by applying $\HHom(\cM, -)$ to the inclusion of $W_n$--Cartier modules $i_{m, *}i_{m}^{\flat}\cN \inc \cN$. Since the first map respects Cartier actions (it is immediate from the definitions), so does the composition. The fact that this is an isomorphism follows from the adjunction between $i_{m, *}$ and $i_m^{\flat}$ on quasi--coherent sheaves.
	\end{proof}
	\begin{rem}
		A straight--forward (but tedious) computation shows that this also holds for the pairing in the other side. Fortunately for us, we will be able to get around that fact.
	\end{rem}
	
	\begin{defn}\label{def property has a unit dualizing complex}
		A complex $W_n\omega_X^\bullet \in D^+(\IndCoh_{W_nX}^{C^r, \unit})$ is said to be a \emph{$W_n$--unit dualizing complex} if $R\iota(W_n\omega_X^\bullet) \in D(W_n\cO_X)$ is a dualizing complex (see \stacksproj{0A85}).
	\end{defn}
	\begin{rem}
		\begin{itemize}
			\item We will see in \autoref{easier_version_unit_dc} that the datum of a unit dualizing complex is equivalent to that of a dualizing complex $\cN \in D_{\coh}(\QCoh_X)$, together with an isomorphism $\cN \to F^{r, !}\cN$ in the derived category.
		\end{itemize}
	\end{rem}
	
	From now on, fix a unit dualizing complex $W_n\omega_X^\bullet$ on $X$ (we assume there exists at least one). Given any $\iota$ (see \autoref{notation_iota}), we make the abuse of notation ``$W_n\omega_X^\bullet = R\iota(W_n\omega_X^\bullet)$'' (this does not cause any trouble in practice, and simplifies notations).
	
	\begin{lem}\label{first properties of RHom(- , omega)}
		Pairing with $W_n\omega_X^\bullet$ restrict and corestrict to triangulated functors \[ \Rcal\HHom(-, W_n\omega_X^\bullet) \colon D^b(\Coh_{W_nX}^{F^r})^{op} \ra D^b_{\coh}(\QCoh_{W_nX}^{C^r}) \] and \[ \Rcal\HHom(-, W_n\omega_X^\bullet) \colon D^b(\Coh_{W_nX}^{C^r})^{op} \ra D^b_{\coh}(\QCoh_{W_nX}^{F^r}). \]
	\end{lem}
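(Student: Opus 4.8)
The plan is to reduce both claims to the classical duality theory for dualizing complexes by passing to the underlying complexes of $W_n\cO_X$--modules; this is legitimate because the content of the lemma is a statement about the cohomology sheaves of the output, and these are detected by an exact forgetful functor.

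By \autoref{pairings}.\autoref{derived_pairing}, the two functors are already well-defined and triangulated, since $W_n\omega_X^\bullet$ lies in $D^+(\IndCoh_{W_nX}^{C^r,\unit}) \subseteq D^+(\IndCoh_{W_nX}^{C^r})$, which is in the permitted range for the second variable. Thus it only remains to verify that, for $\cM^\bullet$ bounded and coherent, the complex $\Rcal\HHom(\cM^\bullet, W_n\omega_X^\bullet)$ has bounded and coherent cohomology. Both properties can be checked after applying the forgetful functor from $\QCoh_{W_nX}^{C^r}$ (resp. $\QCoh_{W_nX}^{F^r}$) to $\Mod(W_n\cO_X)$, since this functor is exact --- kernels and cokernels of morphisms of $W_n$--Frobenius (resp. $W_n$--Cartier) modules are computed on the underlying $W_n\cO_X$--modules --- and hence commutes with taking cohomology objects.

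First I would invoke the commutativity square of \autoref{pairings}.\autoref{commutativity_with_qcoh}: the underlying complex of $\Rcal\HHom(\cM^\bullet, W_n\omega_X^\bullet)$ in $D(W_n\cO_X)$ is $\Rcal\HHom_{W_n\cO_X}(\cM^\bullet, R\iota(W_n\omega_X^\bullet))$. By \autoref{def property has a unit dualizing complex}, the complex $R\iota(W_n\omega_X^\bullet)$ is a dualizing complex on $W_nX$. On the other hand, $\cM^\bullet \in D^b(\Coh_{W_nX}^{F^r})$ (resp. $D^b(\Coh_{W_nX}^{C^r})$) has underlying complex in $D^b_{\coh}(W_n\cO_X)$, because its cohomology objects are coherent $W_n$--Frobenius (resp. $W_n$--Cartier) modules, i.e.\ coherent as $W_n\cO_X$--modules, and there are only finitely many nonzero ones. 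The standard fact that pairing with a dualizing complex restricts to a contravariant auto-equivalence of $D^b_{\coh}(W_n\cO_X)$ --- which may be checked on affine opens, see \stacksproj{0A7C} --- then shows that $\Rcal\HHom_{W_n\cO_X}(\cM^\bullet, R\iota(W_n\omega_X^\bullet))$ is again bounded (using that a dualizing complex has finite injective dimension) and has coherent cohomology.

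Finally I would transport this conclusion back along the forgetful functor: since the latter is exact, the cohomology sheaves $\cH^i(\Rcal\HHom(\cM^\bullet, W_n\omega_X^\bullet))$ computed in $\QCoh_{W_nX}^{C^r}$ (resp. $\QCoh_{W_nX}^{F^r}$) have exactly the coherent $W_n\cO_X$--modules just obtained as their underlying modules, so they are coherent $W_n$--Cartier (resp. $W_n$--Frobenius) modules, and they vanish outside a bounded range. This places $\Rcal\HHom(\cM^\bullet, W_n\omega_X^\bullet)$ in $D^b_{\coh}(\QCoh_{W_nX}^{C^r})$ (resp. $D^b_{\coh}(\QCoh_{W_nX}^{F^r})$), as desired. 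I do not expect a genuine obstacle here; the content is essentially bookkeeping. The one point deserving attention is the second functor, where $\iota$ is merely left exact, so the relevant underlying complex of the dualizing object must be taken to be $R\iota(W_n\omega_X^\bullet)$ --- but this is precisely a dualizing complex by \autoref{def property has a unit dualizing complex}, so the reduction to coherent duality still goes through unchanged.
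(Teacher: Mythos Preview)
Your proposal is correct and follows essentially the same approach as the paper: reduce the boundedness and coherence check to $D(W_n\cO_X)$ via the commutativity of \autoref{pairings}.\autoref{commutativity_with_qcoh}, and then invoke the standard properties of dualizing complexes (the paper cites \stacksproj{0A89} rather than \stacksproj{0A7C}, but the content is the same). Your additional remark about $R\iota$ in the second functor is a useful clarification that the paper leaves implicit.
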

	\begin{proof}
		We have to show that given a complex $\cM^{\bullet}$ in $D^b(\Coh_{W_nX}^{F^r})$ or in $D^b(\Coh_{W_nX}^{C^r})$, the complex $\cR\HHom(\cM^{\bullet}, W_n\omega_X^{\bullet})$ lives in bounded degrees and has coherent cohomology sheaves. This can be checked in $D(W_n\cO_X)$, so we conclude by \autoref{pairings}.\autoref{commutativity_with_qcoh} and \stacksproj{0A89}.
	\end{proof}
	\begin{defn}\label{definition duality}
		Define the duality functors $\Dd_n$ as the following compositions:
		\[ \begin{tikzcd}
			D^b(\Coh_{W_nX}^{F^r})^{op} \arrow[rrr, "{\Rcal\HHom(-, W_n\omega_X^\bullet)}"] & & & {D^b_{\coh}(\QCoh_{W_nX}^{C^r})} \arrow[rrr, "\cong"] & & & D^b(\Coh_{W_nX}^{C^r})
		\end{tikzcd} \] and \[ \begin{tikzcd}
			D^b(\Coh_{W_nX}^{C^r})^{op} \arrow[rrr, "{\Rcal\HHom(-, W_n\omega_X^\bullet)}"] & & & {D^b_{\coh}(\QCoh_{W_nX}^{F^r})} \arrow[rrr, "\cong"] & & & {D^b(\Coh_{W_nX}^{F^r})},
		\end{tikzcd} \] where the equivalence $D^b_{\coh}(\QCoh_{W_nX}^{C^r}) \ra D^b(\Coh_{W_nX}^{C^r})$ comes from \autoref{D^b(Coh) = D^b_{coh}(QCoh) for Cartier modules}, and the equivalence $D^b_{\coh}(\QCoh_{W_nX}^{F^r}) \ra D^b(\Coh_{W_nX}^{F^r})$ comes from \autoref{D^b(Coh) = D^b_{coh}(QCoh) for F-modules}.
	\end{defn}
	
	\begin{lem}\label{evaluation map is okay sheaf case}
		Let $\Mcal \in \Coh_{W_nX}^{F^r}$ (resp. $\Coh_{W_nX}^{C^r}$) and $\Ncal \in \QCoh_{W_nX}^{C^r, \unit}$. Then the evaluation morphism \[ \ev \colon \Mcal \ra \HHom(\HHom(\Mcal, \Ncal), \Ncal) \] is a morphism of $W_n$--Frobenius modules (resp. $W_n$--Cartier modules).
	\end{lem}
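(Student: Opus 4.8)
The map $\ev$ is visibly $W_n\cO_X$--linear, so the only content is that it respects the semilinear structures; since these are given by the explicit formulas of \autoref{construction_pairings} on sections, the verification is local and purely formal. The plan is to first record which pairings occur. If $\cM \in \Coh_{W_nX}^{F^r}$, then $\cH \coloneqq \HHom(\cM, \cN)$ is a $W_n$--Cartier module via the first pairing, and $\cH' \coloneqq \HHom(\cH, \cN)$ is a $W_n$--Frobenius module via the second pairing, so the target of $\ev$ is a Frobenius module and we must produce a Frobenius morphism. If instead $\cM \in \Coh_{W_nX}^{C^r}$, the two pairings occur in the opposite order and the target is a Cartier module, so we must produce a Cartier morphism.

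Treating the Frobenius case, I would reduce the claim to the single identity $\tau_{\cH'} \circ \ev = F^r_*\ev \circ \tau_{\cM}$ of maps $\cM \to F^r_*\cH'$, and verify it after passing to adjoints under the $(F^r_*, F^{r,\flat})$--adjunction. Unwinding the first pairing on sections gives, for a local section $m$ of $\cM$, the identity $\ev(m) \circ \kappa_{\cH} = \kappa_{\cN}\circ F^r_*\big(\ev(\tau_{\cM}(m))\big)$ of maps $F^r_*\cH \to \cN$, where $\ev(\tau_{\cM}(m))$ denotes the functional $f \mapsto f(\tau_{\cM}(m))$; this is immediate from $\kappa_{\cH}(f) = \kappa_{\cN}\circ F^r_* f\circ\tau_{\cM}$. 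Taking $\flat$--adjoints of both sides, using the naturality identities $(h\circ\kappa_{\cH})^\flat = F^{r,\flat}(h)\circ\kappa_{\cH}^\flat$ and $(\kappa_{\cN}\circ F^r_* k)^\flat = \kappa_{\cN}^\flat\circ k$, turns this into $F^{r,\flat}(\ev(m))\circ\kappa_{\cH}^\flat = \kappa_{\cN}^\flat\circ\ev(\tau_{\cM}(m))$. Precomposing with $(\kappa_{\cN}^\flat)^{-1}$, which exists precisely because $\cN$ is unit, the left--hand side becomes $\tau_{\cH'}(\ev(m))$ by the definition of the second pairing, while the right--hand side is $\ev(\tau_{\cM}(m)) = (F^r_*\ev)(\tau_{\cM}(m))$, giving exactly the desired square.

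The Cartier case is the mirror image: here one must check $\kappa_{\cH'}\circ F^r_*\ev = \ev\circ\kappa_{\cM}$, and the same three steps apply. Unwinding the first pairing (which now defines the outer Cartier structure) reduces the claim, after evaluating at a section $f$ of $\cH$, to $\kappa_{\cN}\circ F^r_*(\tau_{\cH}(f)) = f\circ\kappa_{\cM}$ as maps $F^r_*\cM\to\cN$; passing to $\flat$--adjoints, both sides equal $F^{r,\flat}f\circ\kappa_{\cM}^\flat$ (the left one after cancelling $\kappa_{\cN}^\flat\circ(\kappa_{\cN}^\flat)^{-1}=\id$, again by unit--ness), so they agree since the adjunction is a bijection. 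The only delicate point, and the one I would be most careful about, is this adjunction bookkeeping: tracking the variance of $\flat$ and applying the naturality of the $(F^r_*, F^{r,\flat})$--adjunction in the correct slot. The unit hypothesis on $\cN$ enters in exactly one place in each case, namely to invert $\kappa_{\cN}^\flat$, which is what makes the double dual collapse back onto the original structure.
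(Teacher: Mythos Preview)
Your argument is correct and is precisely the explicit section-level computation that underlies the result; the paper itself simply cites the identical argument from the $n=1$ case in \cite[Lemma~4.2.6]{Baudin_Duality_between_perverse_sheaves_and_Cartier_crystals}, since nothing in the verification depends on $n$. One terminological slip: where you write ``precomposing with $(\kappa_{\cN}^\flat)^{-1}$'' you are in fact postcomposing (applying it on the left), but the computation you carry out is the right one.
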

	\begin{proof}
		The proof is identical to that of \cite[Lemma 4.2.6]{Baudin_Duality_between_perverse_sheaves_and_Cartier_crystals}.
	\end{proof}
	
	\begin{thm}\label{main thm duality}
		Let $X$ be a Noetherian, $F$-finite and semi--separated $\bF_q$-scheme with has a $W_n$--unit dualizing complex $W_n\omega_X^\bullet$. Then the two functors $\bD_n$ from \autoref{definition duality} are essential inverses to each other, so we obtain an equivalence
		 \[ D^b(\Coh_{W_nX}^{C^r})^{op} \cong D^b(\Coh_{W_nX}^{F^r}). \]
		 Furthermore, they also induce an equivalence \[ D^b(\Crys_{W_nX}^{C^r})^{op} \cong D^b(\Crys_{W_nX}^{F^r}). \]
		 Finally, given an integer $1 \leq m \leq n$, set $W_m\omega_X^{\bullet} \coloneqq i_m^!W_n\omega_X^{\bullet}$, with associated duality functors $\bD_m$ (either on modules or crystals). Then \[ i_{m, *} \circ \bD_m \cong \bD_n \circ i_{m, *}.  \]
	\end{thm}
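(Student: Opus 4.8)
The plan is to prove the three assertions in turn, reusing the $n=1$ template for biduality and descent and reserving the genuinely new input, \autoref{compatibility_different_n}, for the compatibility statement. Throughout, write $\bD_n^F\colon D^b(\Coh_{W_nX}^{F^r})^{op}\to D^b(\Coh_{W_nX}^{C^r})$ and $\bD_n^C\colon D^b(\Coh_{W_nX}^{C^r})^{op}\to D^b(\Coh_{W_nX}^{F^r})$ for the two duality functors of \autoref{definition duality}.

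First I would establish biduality. For $\cM^\bullet$ in either $D^b(\Coh_{W_nX}^{F^r})$ or $D^b(\Coh_{W_nX}^{C^r})$, the goal is that the evaluation map $\ev\colon \cM^\bullet\to \bD_n\bD_n\cM^\bullet$ is an isomorphism. By \autoref{evaluation map is okay sheaf case} this map is already a morphism of $W_n$--Frobenius (resp. $W_n$--Cartier) modules, so it lifts to the relevant derived category; and whether it is an isomorphism may be tested after the forgetful functor to $D(W_n\cO_X)$, since $\cR\HHom$ commutes with this functor by \autoref{pairings}.\autoref{commutativity_with_qcoh} and the forgetful functor is conservative. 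There it becomes the reflexivity isomorphism of Grothendieck--Serre duality against the dualizing complex $R\iota(W_n\omega_X^\bullet)$, which holds by \stacksproj{0A89}. Running this for modules of both types yields $\bD_n^C\circ\bD_n^F\cong\id$ and $\bD_n^F\circ\bD_n^C\cong\id$, so the two functors are mutually inverse equivalences $D^b(\Coh_{W_nX}^{C^r})^{op}\cong D^b(\Coh_{W_nX}^{F^r})$.

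Next I would descend to crystals. The point is that $\cR\HHom(-,W_n\omega_X^\bullet)$ preserves nilpotence in both directions: a coherent module with nilpotent structural morphism is paired to one with nilpotent structural morphism, because the pairing transposes that morphism. Hence the dualities carry the thick subcategory of complexes with locally nilpotent cohomology into itself, and by the universal property of the Verdier quotient they descend to functors $D^b(\Crys_{W_nX}^{C^r})^{op}\to D^b(\Crys_{W_nX}^{F^r})$, still inverse to one another since the biduality isomorphism descends. I would use \autoref{qcrys with crys cohom is crys} and \autoref{D^b(Coh) = D^b_{coh}(QCoh) for F-modules} throughout to identify these bounded crystal categories with the $\crys$-cohomology subcategories of the quasi-coherent crystal categories.

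Finally comes the compatibility, which is the main obstacle. I would first record that $W_m\omega_X^\bullet:=i_m^!W_n\omega_X^\bullet$ is indeed a $W_m$--unit dualizing complex (a closed-immersion $i_m^!$ of a dualizing complex is dualizing, and $i_m^\flat$ preserves unit ind--coherent Cartier modules), so that $\bD_m$ is defined. For the direction from Frobenius to Cartier modules the identity is immediate from \autoref{compatibility_different_n} applied with $\cN^\bullet=W_n\omega_X^\bullet$: its two sides are exactly $i_{m,*}\bD_m^F(\cM^\bullet)$ and $\bD_n^F(i_{m,*}\cM^\bullet)$, once one checks the routine fact that $i_{m,*}$ commutes with the equivalences $D^b_{\coh}(\QCoh)\cong D^b(\Coh)$ of \autoref{D^b(Coh) = D^b_{coh}(QCoh) for Cartier modules} and \autoref{D^b(Coh) = D^b_{coh}(QCoh) for F-modules}. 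For the opposite direction the honest analogue of \autoref{compatibility_different_n} is only ``tedious'', so rather than prove it I would transfer the identity across the biduality of the first step. Writing $\eta\colon i_{m,*}\bD_m^F\xrightarrow{\sim}\bD_n^F i_{m,*}$ for the isomorphism just obtained, and given $\cN$ in $D^b(\Coh_{W_mX}^{C^r})$, I would evaluate $\eta$ at $\bD_m^C\cN$ to get $i_{m,*}\bD_m^F\bD_m^C\cN\cong\bD_n^F i_{m,*}\bD_m^C\cN$, simplify the left-hand side via $\bD_m^F\bD_m^C\cong\id$, apply the contravariant $\bD_n^C$, and conclude with $\bD_n^C\bD_n^F\cong\id$ to produce a natural isomorphism $i_{m,*}\bD_m^C\cong\bD_n^C i_{m,*}$. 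Every step is natural in $\cN$, so this is a genuine natural isomorphism, and it descends to crystals by the second step (using that $i_{m,*}$ preserves nilpotence). This biduality transfer is the key maneuver that lets us get both directions out of the single one-sided \autoref{compatibility_different_n}.
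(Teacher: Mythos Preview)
Your proposal is correct and follows essentially the same route as the paper. The paper is even terser: it simply records that the biduality and crystal-descent steps are identical to the $n=1$ arguments (citing the analogous ingredients you list), and for the compatibility it writes out exactly your biduality-transfer maneuver as the chain $\bD_n\circ i_{m,*}\cong \bD_n\circ i_{m,*}\circ\bD_m\circ\bD_m\cong \bD_n\circ\bD_n\circ i_{m,*}\circ\bD_m\cong i_{m,*}\circ\bD_m$.
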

	\begin{proof}
		The proof of the first statement is identical to \cite[Theorem 4.2.7]{Baudin_Duality_between_perverse_sheaves_and_Cartier_crystals}. Indeed, the ingredients in \emph{loc. cit.} that are used in the proof are: 
		\begin{itemize}
			\item the fact that the evaluation map respect the Frobenius/Cartier structures (see \autoref{evaluation map is okay sheaf case});
			\item the fact for $A \in \{F, C\}$, the functor $D^b(\Coh_{W_nX}^{A^r}) \to D^b_{\coh}(\QCoh_{W_nX}^{A^r})$ is an equivalence (see \autoref{D^b(Coh) = D^b_{coh}(QCoh) for F-modules} and \autoref{D^b(Coh) = D^b_{coh}(QCoh) for Cartier modules});
			\item the fact for $A \in \{F, C\}$, the functor $D^b(\Coh_{W_nX}^{A^r}) \to D^b_{\coh}(\cO_X[A^r])$ is an equivalence (see  \autoref{D^b(Coh) = D^b_{coh}(O_X) for F-modules} and \autoref{D(QCoh) = D_qcoh(O_X[C])}).
		\end{itemize}
	
		The proof of the second statement is identical to that of \cite[Theorem 4.3.5]{Baudin_Duality_between_perverse_sheaves_and_Cartier_crystals}. Let $(A, B) \in \{(F, C), (C, F)\}$, and let us show that there is a natural isomorphism \[  i_{m, *} \circ \bD_m \cong \bD_n \circ i_{m, *} \] as functors $D^b(\Coh_{W_mX }^{A^r})^{op} \to D^b(\Coh_{W_nX}^{B^r})$. If $A = F$, then this follows from \autoref{compatibility_different_n}. For $A = C$, we deduce the statement by the following string of isomorphisms: \[ \bD_n \circ i_{m, *} \cong \bD_n \circ i_{m, *} \circ \bD_m \circ \bD_m \expl{\cong}{by the case $A = F$} \bD_n \circ \bD_n \circ i_{m, *} \circ \bD_m \cong i_{m, *} \circ \bD_m. \]
	\end{proof}

	\begin{rem}
		Thanks to the last compatibility in \autoref{main thm duality}, we will remove the subscript $n$ in the notation $\bD_n$.
	\end{rem}

	By this duality theorem, we can proceed as in the case $n = 1$ to obtain a better understanding of $W_n$--unit dualizing complexes.
	
	\begin{prop}\label{easier_version_unit_dc}
		Assume that $X$ is semi--separated, and let $W_n\omega_X^{\bullet} \in D^b_{\coh}(W_n\cO_X)$ be a dualizing complex with a fixed isomorphism $W_n\omega_X^{\bullet} \to F^{r, !}W_n\omega_X^{\bullet}$. Then there exists a unique $W_n$--unit dualizing complex $\cM^{\bullet}$ that admits an isomorphism \[ \theta \colon R\iota\cM^{\bullet} \to W_n\omega_X^{\bullet} \] in $D(W_n\cO_X)$, such that the diagram 
		\[ \begin{tikzcd}
			R\iota\cM^{\bullet} \arrow[d, "\theta"'] \arrow[rr] &  & {F^{r, !}R\iota\cM^{\bullet}} \arrow[d, "{F^{r, !}\theta}"] \\
			W_n\omega_X^{\bullet} \arrow[rr]                                                &  & {F^{r, !}W_n\omega_X^{\bullet}}                                                        
		\end{tikzcd} \] commutes, where the top arrow is induced by the Cartier structure on $\cM^{\bullet}$.
	\end{prop}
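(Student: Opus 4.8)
The plan is to follow the $n=1$ construction of \cite{Baudin_Duality_between_perverse_sheaves_and_Cartier_crystals}, rigidifying the purely derived--categorical datum $(W_n\omega_X^\bullet, c)$, where $c \colon W_n\omega_X^\bullet \to F^{r,!}W_n\omega_X^\bullet$ denotes the fixed isomorphism, into an honest complex of unit ind--coherent $W_n$--Cartier modules. First I would translate $c$ into a derived Cartier action: since $F$ is finite, $F^r_*$ is exact, so $RF^r_* = F^r_*$, and since $F^{r,!} = RF^{r,\flat}$, the derived adjunction between $F^r_*$ and $F^{r,\flat}$ (\stacksproj{0DVC}) identifies $c$ with a map $\bar\kappa \colon F^r_*W_n\omega_X^\bullet \to W_n\omega_X^\bullet$ in $D(W_n\cO_X)$ whose adjoint is an isomorphism (this last fact being the derived incarnation of the unit condition).

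Next I would rectify $\bar\kappa$ to a genuine chain--level action. Choosing a bounded--below complex of injective $W_n\cO_X$--modules $\cI^\bullet$ representing $W_n\omega_X^\bullet$, we have $\Hom_{D(W_n\cO_X)}(F^r_*\cI^\bullet, \cI^\bullet) = \Hom_{K(W_n\cO_X)}(F^r_*\cI^\bullet, \cI^\bullet)$, so $\bar\kappa$ is represented by an actual chain map $\kappa \colon F^r_*\cI^\bullet \to \cI^\bullet$, unique up to homotopy. This endows each term $\cI^n$ with a $W_n$--Cartier module structure for which the differentials are Cartier--linear, i.e.\ it produces an object of $D_{\qcoh}(W_n\cO_X[C^r])$ lifting $(W_n\omega_X^\bullet, c)$; since its cohomology is coherent, \autoref{D(QCoh) = D_qcoh(O_X[C])} and \autoref{D^b(Coh) = D^b_{coh}(QCoh) for Cartier modules} place it in $D^b(\Coh_{W_nX}^{C^r})$.

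It remains to make this object unit and ind--coherent without disturbing its image under $R\iota$. Here I would apply the unitalization functor $\cM \mapsto \colim_e F^{er, \flat}\cM$, whose derived version lands in $D^+(\IndCoh_{W_nX}^{C^r,\unit})$ via the equivalence $\QCrys_{W_nX}^{C^r} \cong \QCoh_{W_nX}^{C^r,\unit}$ and its ind--coherent analogue. The point is that, because the adjoint of $\bar\kappa$ is already an isomorphism in the derived category, all transition maps $F^{er,\flat}W_n\omega_X^\bullet \to F^{(e+1)r,\flat}W_n\omega_X^\bullet$ are isomorphisms, so the colimit is canonically isomorphic to $W_n\omega_X^\bullet$ in $D(W_n\cO_X)$; this supplies the required $\theta$, and the compatibility square with $F^{r,!}\theta$ commutes by construction, since both vertical arrows trace back to $c$. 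That the resulting $\cM^\bullet$ is a $W_n$--unit dualizing complex is then exactly the condition on $R\iota\cM^\bullet \cong W_n\omega_X^\bullet$ from \autoref{def property has a unit dualizing complex}.

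For uniqueness, I would argue that the chain map $\kappa$ rectifying $\bar\kappa$ is unique up to homotopy and that unitalization is functorial, so that any two solutions have isomorphic underlying unit Cartier complexes; alternatively, dualizing via \autoref{main thm duality} reduces the claim to the rigidity of the unit Frobenius module $W_n\cO_X$ on the other side, whose self--$F^r$--structure is canonical. The main obstacle I expect is precisely this rectification together with the verification that unitalization preserves the underlying complex: one must ensure that the homotopy--class choice of $\kappa$ followed by passage to the colimit genuinely yields an object of the \emph{abelian}--categorical derived category $D^+(\IndCoh_{W_nX}^{C^r,\unit})$ with the prescribed image, rather than merely a derived Cartier structure. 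This is where the finiteness inputs (\autoref{injectives_everywhere} and the equivalences above) do the real work.
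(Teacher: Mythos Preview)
Your proposal is correct and follows essentially the same route as the paper: the paper's proof simply invokes \autoref{main thm duality} and says the argument is then identical to \cite[Propositions 5.1.1 and 5.1.3]{Baudin_Duality_between_perverse_sheaves_and_Cartier_crystals}, which is precisely the rectification--then--unitalization construction you outline, with the duality theorem supplying uniqueness via the rigidity of $W_n\cO_X$ on the Frobenius side. Your identification of the main technical point---that unitalization of the rectified complex genuinely lands in $D^+(\IndCoh_{W_nX}^{C^r,\unit})$ with the correct underlying $R\iota$---is exactly where \autoref{injectives_everywhere} and the comparison equivalences are used.
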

	\begin{proof}
		Thanks to \autoref{main thm duality}, the result follows from the same proof as in \cite[Propositions 5.1.1 and 5.1.3]{Baudin_Duality_between_perverse_sheaves_and_Cartier_crystals}.
	\end{proof}
	
	\begin{rem}
		By \autoref{easier_version_unit_dc}, we deduce that the datum of a $W_n$--unit dualizing complex is equivalent to that or an ordinary dualizing complex $W_n\omega_X^{\bullet}$ with a fixed isomorphism $W_n\omega_X^{\bullet} \to F^{r, !}W_n\omega_X^{\bullet}$.
	\end{rem}

	\begin{cor}\label{existence_of_W_n_unit_dc}
		Let $X$ be a semi--separated scheme of finite type over the spectrum of a Noetherian, $F$--finite $\bF_q$--algebra. Then $X$ admits a $W_n$--unit dualizing complex.
	\end{cor}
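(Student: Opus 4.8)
The plan is to reduce, via the remark following \autoref{easier_version_unit_dc}, to an ordinary problem in coherent duality together with a Frobenius compatibility. Indeed, that remark identifies the datum of a $W_n$--unit dualizing complex with that of an ordinary dualizing complex $W_n\omega_X^\bullet \in D^b_{\coh}(W_n\cO_X)$ equipped with an isomorphism $W_n\omega_X^\bullet \to F^{r,!}W_n\omega_X^\bullet$ in $D(W_n\cO_X)$. So it suffices to produce such a pair, and the construction naturally separates into an existence statement for some dualizing complex and a Frobenius normalization on top of it.

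For the existence of a dualizing complex I would write $X \to \Spec A$ with $A$ Noetherian and $F$--finite. By \autoref{general_statements_Witt_vectors} the induced morphism $\pi \colon W_nX \to \Spec W_n(A)$ is again of finite type, and $W_n(A)$ is Noetherian and $F$--finite. An $F$--finite Noetherian ring is a quotient of a regular ring and hence admits a dualizing complex $\omega^\bullet$ on $\Spec W_n(A)$; then $\pi^!\omega^\bullet \in D^b_{\coh}(W_n\cO_X)$ is a dualizing complex on $W_nX$, using that $\pi$ is of finite type and that the base admits a dualizing complex.

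The crux is the second part: upgrading the chosen dualizing complex with an isomorphism $W_n\omega_X^\bullet \cong F^{r,!}W_n\omega_X^\bullet$. Since $F \colon W_nX \to W_nX$ is finite, $F^{r,!}W_n\omega_X^\bullet$ is again a dualizing complex, so it differs from $W_n\omega_X^\bullet$ only by a shift and a twist by an invertible sheaf; the real work is to normalize the dualizing complex so that these become trivial and, more importantly, to obtain a \emph{canonical} such isomorphism arising from a trace map, so that the induced structure genuinely defines a unit $W_n$--Cartier structure, i.e. an object of $\IndCoh_{W_nX}^{C^r, \unit}$. In the case $n = 1$ this is the classical Frobenius trace on the dualizing complex; the construction over the truncated Witt vectors $W_n$ is carried out in \cite[Appendix]{Quasi_F_splittings_III}, which I would cite to conclude. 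The main obstacle is therefore localized entirely in this Frobenius normalization over $W_n$, rather than in the mere existence of a dualizing complex, and it is precisely the input that \autoref{easier_version_unit_dc} (resting on the duality theorem \autoref{main thm duality}) was set up to make usable.
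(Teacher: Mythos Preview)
Your proposal is correct and follows essentially the same approach as the paper: reduce via \autoref{easier_version_unit_dc} to producing an ordinary dualizing complex on $W_nX$ with an isomorphism to its $F^{r,!}$, invoke \cite[Appendix]{Quasi_F_splittings_III} for the key Witt--vector input on the affine base, and pull back along the finite--type structure morphism via the upper--shriek functor of \stacksproj{0AU5}.

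The only organizational difference is that the paper does not separate ``existence of a dualizing complex'' from ``Frobenius normalization'': it cites \cite[Theorem 9.1]{Quasi_F_splittings_III} once to obtain a dualizing complex $W_n\omega_A^\bullet$ on $\Spec W_n(A)$ \emph{already equipped} with an isomorphism $W_n\omega_A^\bullet \to F^!W_n\omega_A^\bullet$, then simply applies $f_{new}^!$ to this isomorphism (using that Frobenius commutes with $f$) and iterates $r$ times. This makes your intermediate discussion about shifts, twists by invertible sheaves, and whether the resulting structure is ``genuinely unit'' unnecessary; the latter concern in particular is exactly what \autoref{easier_version_unit_dc} already packages for you.
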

	\begin{proof}
		Let $f \colon X \to \Spec(A)$ be a morphism of finite type, with $A$ a Noetherian, $F$--finite $\bF_q$--algebra. By \cite[Theorem 9.1]{Quasi_F_splittings_III}, there exists a dualizing complex $W_n\omega_A^{\bullet}$ on $\Spec(W_nA)$ with an isomorphism $W_n\omega_A^{\bullet} \to F^!W_n\omega_A^{\bullet}$. Applying the functor $f_{new}^!$ from \stacksproj{0AU5}, and setting $W_n\omega_X^{\bullet} \coloneqq f^!_{new}W_n\omega_A^{\bullet}$, we obtain an isomorphism \[ W_n\omega_X^{\bullet} \cong F^!W_n\omega_X^{\bullet}. \] Note that $W_n\omega_X^{\bullet}$ is a dualizing complex by the discussion in \emph{loc. cit.} (right above \stacksproj{0AU7}). Since we can compose the isomorphism above $r$ times, we conclude the proof by \autoref{easier_version_unit_dc}.
	\end{proof}

	\begin{notation}
		Let $f \colon X \to Y$ of finite type between semi--separated, Noetherian and $F$--finite $\bF_q$--schemes. Given a $W_n$--unit dualizing complex $W_n\omega_Y^{\bullet}$, we can proceed exactly as in \autoref{existence_of_W_n_unit_dc} and obtain a $W_n$--unit dualizing complex $W_n\omega_X^{\bullet}$ on $X$. We will write $W_n\omega_X^{\bullet} \coloneqq f^!W_n\omega_Y^{\bullet}$.
	\end{notation}

	As in the case $n = 1$, it is possible to define the functor $f^!$ for finite type morphisms on $W_n$--Cartier crystals using our duality (not only finite ones). Since we will not need it in practice and since there should be a much more natural definition using $\infty$--categories and results of \cite{Mattis_Weiss_The_derived_infty_category_of_Cartier_modules}, we will not write it explicitly.
	
	On the other hand, an important property is the compatibility between our duality functor and proper pushforwards.
	
	\begin{prop}\label{comp duality and pushforwards}
		Let $f \colon X \to Y$ be a proper morphism of finite type between Noetherian, $F$--finite and semi--separated $\bF_q$--schemes, and let $(A, B) \in \{(F, C), (C, F)\}$. Assume that $Y$ has a $W_n$--unit dualizing complex $W_n\omega_Y^{\bullet}$ with induced duality functor $\bD_{Y}$, and let $W_n\omega_X^{\bullet} \coloneqq f^!W_n\omega_Y^{\bullet}$, with induced duality functor $\bD_{X}$. Then we have that \[ Rf_* \circ \bD_X \cong \bD_Y \circ Rf_* \] as functors $D^b(\Coh_{W_nX}^{A^r})^{op} \to D^b(\Coh_{W_nY}^{B^r})$, where the $Rf_*$ functors are taken in $\IndCoh$.
	\end{prop}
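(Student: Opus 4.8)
The plan is to reduce first to the single case $(A,B) = (F,C)$, and then to identify the required comparison with ordinary Grothendieck duality, upgraded so as to respect the Frobenius/Cartier structures. For the reduction, once the case $A = F$ is known, the case $A = C$ follows formally from the involutivity $\bD_X \circ \bD_X \cong \id$, $\bD_Y \circ \bD_Y \cong \id$ of \autoref{main thm duality}, exactly as in the proof of the $i_{m,*}$--compatibility there: one writes
\[ \bD_Y \circ Rf_* \cong \bD_Y \circ Rf_* \circ \bD_X \circ \bD_X \cong \bD_Y \circ \bD_Y \circ Rf_* \circ \bD_X \cong Rf_* \circ \bD_X, \]
the middle isomorphism being the already-established case $A = F$ applied to the inner $Rf_* \circ \bD_X$ (with $\bD_X \colon D^b(\Coh_{W_nX}^{F^r}) \to D^b(\Coh_{W_nX}^{C^r})$), and the outer ones the involutivity.

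So I would concentrate on $A = F$. Unwinding \autoref{definition duality}, $\bD_X$ is $\Rcal\HHom(-, W_n\omega_X^\bullet)$ followed by the equivalence $D^b_{\coh}(\QCoh_{W_nX}^{C^r}) \cong D^b(\Coh_{W_nX}^{C^r})$ of \autoref{D^b(Coh) = D^b_{coh}(QCoh) for Cartier modules}, and similarly for $\bD_Y$. Since $f$ is proper, $Rf_*\cM^\bullet$ has coherent underlying cohomology, so $\Rcal\HHom(-, W_n\omega_Y^\bullet)$ applies to it (\autoref{first properties of RHom(- , omega)}), and the resulting Cartier complex is coherent by \autoref{finiteness_pushforwards}. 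The statement then amounts to producing, for $\cM^\bullet \in D^b(\Coh_{W_nX}^{F^r})$, a natural isomorphism
\[ Rf_*\, \Rcal\HHom(\cM^\bullet, W_n\omega_X^\bullet) \;\cong\; \Rcal\HHom(Rf_*\cM^\bullet, W_n\omega_Y^\bullet) \]
of $W_n$--Cartier complexes on $Y$, the pushforwards being the $\IndCoh$--pushforwards. Because proper pushforward commutes with the forgetful functors (\autoref{all derived pushforwards are the same for F-modules}, \autoref{injectives_everywhere}) and so does $\Rcal\HHom$ (\autoref{pairings}.\autoref{commutativity_with_qcoh}), both sides have as underlying object in $D(W_n\cO_Y)$ the two sides of the classical Grothendieck duality isomorphism
\[ Rf_*\, \Rcal\HHom_{W_n\cO_X}(\cM^\bullet, f^! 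W_n\omega_Y^\bullet) \cong \Rcal\HHom_{W_n\cO_Y}(Rf_*\cM^\bullet, W_n\omega_Y^\bullet) \]
for $f \colon W_nX \to W_nY$, recalling $W_n\omega_X^\bullet = f^! W_n\omega_Y^\bullet$ (see e.g. \stacksproj{0A9T}). It thus suffices to check that this canonical isomorphism respects the $W_n$--Cartier structures, since the forgetful functor to $D(W_n\cO_Y)$ is conservative and an underlying isomorphism that is Cartier--linear is an isomorphism of Cartier complexes.

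The verification of the structures is the heart of the matter, and it is where I expect the main difficulty to lie. The Cartier action on the left comes, via \autoref{construction_pairings} and $Rf_*$, from the Frobenius structure $\tau_{\cM}$ together with the unit Cartier structure of $W_n\omega_X^\bullet$; on the right it comes from the Frobenius structure of $Rf_*\cM^\bullet$ and the Cartier structure of $W_n\omega_Y^\bullet$. These match because, first, $f$ commutes with the absolute Frobenius, which yields a canonical identification $F^r_* \circ Rf_* \cong Rf_* \circ F^r_*$; and second, the unit structure of $W_n\omega_X^\bullet = f^! W_n\omega_Y^\bullet$ is by construction induced from that of $W_n\omega_Y^\bullet$ through the base-change isomorphism $f^! \circ F^{r,!} \cong F^{r,!} \circ f^!$ attached to that commuting square. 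Granting the naturality of the Grothendieck duality isomorphism with respect to this Frobenius square, the two Cartier actions are intertwined. I would carry out this equivariance check after reducing to an affine target, and, as everywhere in this paper, it runs exactly parallel to the $n=1$ computation of \cite{Baudin_Duality_between_perverse_sheaves_and_Cartier_crystals}. The single genuinely delicate point, as in \emph{loc. cit.}, is the Frobenius-equivariance of the trace map underlying Grothendieck duality; everything else is the bookkeeping enabled by \autoref{D^b(Coh) = D^b_{coh}(QCoh) for Cartier modules}, \autoref{injectives_everywhere}, \autoref{all derived pushforwards are the same for F-modules}, \autoref{finiteness_pushforwards} and \autoref{pairings}.
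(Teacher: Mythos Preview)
Your proposal is correct and follows the same approach as the paper: the paper's proof consists of a single sentence deferring to \cite[Corollary 5.1.7]{Baudin_Duality_between_perverse_sheaves_and_Cartier_crystals}, and what you have written is precisely a sketch of that argument in the $W_n$--setting --- the reduction to $(A,B)=(F,C)$ via involutivity (as in the $i_{m,*}$--compatibility of \autoref{main thm duality}), followed by identifying the comparison with the Grothendieck duality isomorphism and checking its compatibility with the Cartier structures. No substantive difference.
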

	\begin{proof}
		The proof is identical to that of \cite[Corollary 5.1.7]{Baudin_Duality_between_perverse_sheaves_and_Cartier_crystals}.
	\end{proof}
	
	\subsection{Perverse $W_n(\Ff_q)$--sheaves and duality}\label{section eq cat}
	
	Our final goal is to show that $\Sol \circ \bD$ induces an equivalence of categories between $W_n$--Cartier crystals perverse $W_n(\bF_q)$--sheaves, by reducing to the case $n = 1$. Let us first define and work with this latter category. Unlike in \cite{Baudin_Duality_between_perverse_sheaves_and_Cartier_crystals}, we will not explicit a perverse t--structure on $W_n$--Frobenius modules and crystals. We believe that the right way to do this is to use techniques in the spirit of \cite{Mattis_Weiss_The_derived_infty_category_of_Cartier_modules}.
	
	Throughout, we will implicitly use that a Noetherian, $F$--finite $\bF_p$--scheme has finite dimension (see \cite[Proposition 1.1]{Kunz_Noetherian_Rings_of_char_p}).

	\begin{defn}[\cite{Gabber_notes_on_some_t_structures}]
		Define the \emph{perverse t-structure} on $D(X_{\et}, W_n(\bF_q))$ associated to the middle perversity to be given by \[ ^pD^{\leq 0} \coloneqq \bigset{\Fcal^{\bullet} \in D(X_{\et}, W_n(\Ff_q))}{\cH^j(\Fcal^{\bullet})_{\overline{x}} = 0 \esp \forall j > -\dim(\overline{\{x\}}) \esp \forall x \in X} \] and \[ ^pD^{\geq 0} \coloneqq \bigset{\Fcal^{\bullet} \in D^+(X_{\et}, W_n(\Ff_q))}{\cH^j(i_{\overline{\{x\}}}^!\Fcal^{\bullet})_{\overline{x}} = 0 \esp \forall j < -\dim(\overline{\{x\}}) \esp \forall x \in X}. \]
	\end{defn}

	\begin{prop}\label{non-constr_yet}
		The datum $({}^pD^{\leq 0}, {}^pD^{\geq 0})$ defines a t-stucture on $D(X_{\et}, W_n(\bF_q))$, and perverse truncations preserve bounded complexes. Hence, this t-structure restrics to a t-structure on $D^b(X_{\et}, W_n(\bF_q))$. Furthermore, for all $1 \leq m \leq n$, the natural functor $D^b(X_{\et}, W_m(\bF_q)) \to D^b(X_{\et}, W_n(\bF_q))$ is perverse t--exact.
	\end{prop}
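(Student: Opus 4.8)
The plan is to deduce the first two assertions from Gabber's general construction of the perverse $t$--structure, and to prove $t$--exactness of the comparison functor by hand, exploiting that restriction of scalars is exact and commutes with the operations appearing in the definition of $({}^pD^{\leq 0}, {}^pD^{\geq 0})$.

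First I would observe that $W_n(\bF_q)$ is a finite (hence Noetherian) local ring whose maximal ideal is generated by $p$ and whose residue field is $\bF_q$, so it is a torsion coefficient ring to which the construction of \cite{Gabber_notes_on_some_t_structures} applies verbatim. Gabber's results then yield directly that $({}^pD^{\leq 0}, {}^pD^{\geq 0})$ is a $t$--structure on $D(X_{\et}, W_n(\bF_q))$ for the middle perversity. The only additional input for passing to $D^b$ is boundedness of the perverse truncations. Here I would use that $X$, being Noetherian and $F$--finite, has finite Krull dimension by \cite[Proposition 1.1]{Kunz_Noetherian_Rings_of_char_p}, and that the étale site of such a scheme has finite $p$--cohomological dimension; combined with the fact that the middle perversity function is bounded (it takes values in $[-\dim X, 0]$), this forces $^p\tau^{\leq 0}$ and $^p\tau^{\geq 0}$ to have bounded cohomological amplitude, hence to preserve $D^b(X_{\et}, W_n(\bF_q))$. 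That the $t$--structure then restricts to $D^b$ is formal once a thick subcategory is stable under the truncation functors.

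For the last assertion, the natural functor $D^b(X_{\et}, W_m(\bF_q)) \to D^b(X_{\et}, W_n(\bF_q))$ is restriction of scalars along the surjective restriction map $W_n(\bF_q) \surj W_m(\bF_q)$. This functor is exact on the underlying abelian categories and leaves the underlying sheaves of abelian groups unchanged; in particular it commutes with the cohomology functors $\cH^j$ and with the geometric stalk functors $(-)_{\overline{x}}$. Moreover, writing $i \coloneqq i_{\overline{\{x\}}}$ with open complement $j$, it commutes with $i^!$: applying the exact restriction--of--scalars functor to the localization triangle $i_*i^! \to \mathrm{id} \to Rj_*j^*$ and using that it is compatible with $i_*$, $j^*$ and $Rj_*$, one recovers the same triangle computing $i^!$ downstairs, and fully faithfulness of $i_*$ gives the desired natural isomorphism. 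Since $^pD^{\leq 0}$ and $^pD^{\geq 0}$ are defined purely by vanishing of the cohomology sheaves of $\Fcal^\bullet$ and of $i_{\overline{\{x\}}}^!\Fcal^\bullet$ at the points $\overline{x}$, both conditions are preserved, so the functor is $t$--exact.

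The main obstacle is not conceptual but a matter of correctly invoking Gabber's hypotheses: one must confirm that the coefficient ring $W_n(\bF_q)$ and the finite--dimensional $F$--finite base $X$ fall within the scope of \cite{Gabber_notes_on_some_t_structures}, and in particular that the cohomological--dimension bounds guaranteeing boundedness of the truncations genuinely hold in this generality. The $t$--exactness statement, by contrast, is essentially formal once one records that the forgetful functor alters neither the underlying étale sheaves nor any of the three operations ($\cH^j$, geometric stalks, and $i^!$) entering the definition of the perverse $t$--structure.
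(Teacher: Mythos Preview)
Your proposal is correct and follows the same overall outline as the paper, but there are two places where the paper's argument differs in detail.

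For boundedness of truncations, the paper does not invoke finite $p$--cohomological dimension at all. Instead it argues directly: first it notes that every perverse sheaf lies in $D^b$ (since ${}^pD^{\geq 0} \subseteq D^+$ by definition and ${}^pD^{\leq 0} \subseteq D^{\leq 0}$ by inspection), so perverse cohomology preserves $D^b$. Then for $\cF^{\bullet} \in D^b$ it checks by hand that $\cF^{\bullet} \in {}^pD^{\leq s} \cap {}^pD^{\geq -s}$ for $s \gg 0$, using only that $X$ is finite--dimensional; this gives ${}^p\tau_{\leq -s}\cF^{\bullet} = {}^p\tau_{\geq s}\cF^{\bullet} = 0$ for large $s$. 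Finally, an induction on the triangle ${}^p\tau_{\leq j-1}\cF^{\bullet} \to {}^p\tau_{\leq j}\cF^{\bullet} \to {}^p\cH^j(\cF^{\bullet})[-j]$ shows each ${}^p\tau_{\leq j}\cF^{\bullet}$ is bounded. Your sketch via cohomological--dimension bounds would also work, but is less self--contained and a bit imprecise as stated.

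For $t$--exactness of restriction of scalars, the paper takes a slightly different route to the compatibility with $i_{\overline{\{x\}}}^!$: rather than the localization triangle, it cites SGA~4 to the effect that flasque sheaves are acyclic for $i_{\overline{\{x\}}}^{\flat}$ and that flasqueness depends only on the underlying abelian sheaf, so a flasque resolution computes $i_{\overline{\{x\}}}^!$ independently of the coefficient ring. Your argument via the triangle $i_*i^! \to \id \to Rj_*j^*$ is a perfectly good alternative and amounts to the same observation.
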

	\begin{proof}
		The first statement is a direct consequence of the results in \cite[Section 6]{Gabber_notes_on_some_t_structures}. Let us show that perverse truncations preserve bounded complexes. Note that by definition, a perverse sheaf always lives in $D^b(X_{\et}, W_n(\bF_q))$. In particular, taking perverse cohomology sheaves surely preserves $D^b(X_{\et}, W_n(\bF_q))$. Let us deduce that also perverse truncations do, so let $\cF^{\bullet} \in D^b(X_{\et}, W_n(\cF_q))$. We first show that for $s \gg 0$, it holds that ${}^p\tau_{\geq s}(\cF^{\bullet}) = 0$ and ${}^p\tau_{\leq  -s}(\cF^{\bullet}) = 0$. Using the usual ``truncation exact triangles'', this would follow from the inclusion $\cF^{\bullet} \in {}^pD^{\leq s} \cap {}^pD^{\geq -s}$ for $s \gg 0$. The fact that $\cF^{\bullet} \in {}^pD^{\geq -s}$ is immediate from the definition, and the fact that $\cF^{\bullet} \in {}^pD^{\leq s}$ follows from finite--dimensionality of $X$.
		
		Now, let us get to the proof that perverse truncations preserve bounded complexes. We will only show that each ${}^p\tau_{\leq i}\cF^{\bullet}$ is bounded, as the argument for the complexes ${}^p\tau_{\geq i}\cF^{\bullet}$ is identical.
		
		By contradiction, let $i$ be the smallest integer such that ${}^p\tau_{\leq i}\cF^{\bullet} \notin D^b(X_{\et}, W_n(\bF_q))$ (such an integer exists, since ${}^p\tau_{\leq -s}(\cF^{\bullet}) = 0$ for $s \gg 0$). Then we have an exact triangle \[ {}^p\tau_{\leq j - 1}\cF^{\bullet} \to  {}^p\tau_{\leq j}\cF^{\bullet} \to {}^p\cH^j(\cF^{\bullet})[-j] \xto{+1} \] Since both the left and right term are bounded complexes, so is the middle term, so we are done.
		
		We are left to show that $D^b(X_{\et}, W_m(\bF_q)) \to D^b(X_{\et}, W_n(\bF_q))$ is perverse t--exact. The fact that it preserves ${}^pD^{\leq 0}$ is immediate from the definitions. To see that it also preserves ${}^pD^{\geq 0}$, it is enough to show that taking $i_{\overline{\{x\}}}^!$ in either $D(X_{\et}, W_m(\bF_q))$ or in $D(X_{\et}, W_n(\bF_q))$ gives to same result for an object $\cM^{\bullet} \in D(X_{\et}, W_m(\bF_q))$. This is immediate, since flasque sheaves are acyclic for $i_{\overline{\{x\}}}^!$ (see \cite[Proposition 6.6]{SGA_4_tome_2}), and being flasque only depends on the underlying abelian sheaf (see \cite[Beginning of 4.4]{SGA_4_tome_2}).
	\end{proof}
	
	\begin{cor}\label{perverse_t_structure}
		The perverse truncations and perverse cohomology sheaves functors on $D^b(X_{\et}, W_n(\bF_q))$ preserve the full subcategory $D^b_c(X_{\et}, W_n(\bF_q))$.
	\end{cor}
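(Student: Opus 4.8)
The plan is to invoke the standard criterion that a $t$-structure on a triangulated category restricts to a strictly full triangulated subcategory exactly when that subcategory is stable under one of the two truncation functors. Since $D^b_c(X_{\et},W_n(\bF_q))$ is a triangulated subcategory of $D^b(X_{\et},W_n(\bF_q))$ and the perverse truncations already preserve boundedness by \autoref{non-constr_yet}, it suffices to prove that ${}^p\tau_{\le 0}$ (equivalently ${}^p\tau_{\ge 0}$) carries $D^b_c$ into itself. Granting this, every ${}^p\tau_{\le i}$, every ${}^p\tau_{\ge i}$, and each ${}^p\cH^i$ (being a composite of perverse truncations and a shift) preserves $D^b_c$, which is the full assertion.

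To prove stability under ${}^p\tau_{\le 0}$ I would reduce to the case $n=1$, using the last compatibility of \autoref{non-constr_yet}. For $n=1$ the coefficients form the field $\bF_q$, and the fact that the middle perverse $t$-structure restricts to $D^b_c(X_{\et},\bF_q)$ is classical (it follows from the stratification description of the truncations, on each stratum an ordinary shifted truncation preserving lisseness). For general $n$, given $\cF^{\bullet}\in D^b_c(X_{\et},W_n(\bF_q))$ I would consider the exact triangle
\[ \cF^{\bullet} \xrightarrow{\ p\ } \cF^{\bullet} \to \cC^{\bullet} \to \cF^{\bullet}[1] \]
defined by multiplication by $p$. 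Each $\cH^i(\cC^{\bullet})$ is an extension of $\cH^i(\cF^{\bullet})/p$ by $\cH^{i+1}(\cF^{\bullet})[p]$, hence is killed by $p$; thus $\cC^{\bullet}$ is constructible and lies in the essential image of the change-of-coefficients functor $D^b(X_{\et},\bF_q)\to D^b(X_{\et},W_n(\bF_q))$, which is perverse $t$-exact by \autoref{non-constr_yet}. Combining this $t$-exactness with the base case shows that the perverse truncations of $\cC^{\bullet}$ are constructible. Finally, since $p$ is nilpotent on $W_n(\bF_q)$, the octahedral axiom applied to the factorization $p^n=0$ exhibits $\Cone(p^n)=\cF^{\bullet}\oplus\cF^{\bullet}[1]$ as a finite iterated extension of copies of $\cC^{\bullet}=\Cone(p)$, and a summand-and-extension argument then transports constructibility of the perverse truncations back to $\cF^{\bullet}$.

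The main obstacle is precisely the closure property that legitimizes the last step: one needs that the class of objects all of whose perverse cohomology sheaves are constructible is a thick triangulated subcategory. Closure under shifts and direct summands is formal, but closure under cones amounts, through the long exact sequence of perverse cohomology in the abelian heart $\Perv(X_{\et},W_n(\bF_q))$, to the statement that \emph{constructible perverse sheaves form a Serre subcategory}, i.e.\ are stable under sub-objects, quotients and extensions. Stability under extensions is immediate from the ordinary-cohomology long exact sequence, so the genuine finiteness input is stability under passing to perverse sub-objects and quotients. I would establish this either directly by the stratification argument (uniform in the coefficient ring, since constructible étale sheaves over the Noetherian ring $W_n(\bF_q)$ are closed under sub-objects and quotients) or, in keeping with the reduction philosophy of the paper, by applying the same $\times p$ device to a constructible perverse sheaf and propagating the field-case Serre property back across the perverse $t$-exact functor. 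Either way, this Serre/finiteness statement — rather than the formal bootstrap — is where the real work lies.
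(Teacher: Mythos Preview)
Your overall plan (reduce to $n=1$ via $p$--torsion pieces and invoke the perverse $t$--exactness of the change--of--coefficients functor from \autoref{non-constr_yet}) is the paper's plan as well, and your closing diagnosis---that the load--bearing step is the Serre property of $\Perv_c$ inside $\Perv$---is correct; the paper's ``by induction and the long exact sequences in perverse cohomology sheaves'' quietly uses the very same input.

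There is, however, a concrete error in your reduction. You assert that $\cC^\bullet=\Cone(p\colon\cF^\bullet\to\cF^\bullet)$ lies in the essential image of $D^b(X_{\et},\bF_q)\to D^b(X_{\et},W_n(\bF_q))$ because its cohomology sheaves are $p$--torsion. This is false for $n\geq 2$. Already for $X=\Spec k$ and $\cF^\bullet=W_n(\bF_q)$, the two--term complex $[W_n(\bF_q)\xrightarrow{p}W_n(\bF_q)]$ is not isomorphic in $D^b(W_n(\bF_q))$ to any direct sum of shifts of $\bF_q$ (and every object of $D^b(\bF_q)$ is such a sum). Indeed $\Ext^i_{W_n(\bF_q)}(\bF_q,\bF_q)=\bF_q$ for all $i\geq 0$, so $\Hom(\bF_q\oplus\bF_q[1],\bF_q[1])=\bF_q^2$, whereas $\Hom(\Cone(p),\bF_q[1])=\bF_q$. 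Having $p$--torsion cohomology is strictly weaker than lying in the image of $D^b(\bF_q)$.

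This can be repaired by further filtering $\cC^\bullet$ through its ordinary truncations, so that the graded pieces are honest shifted constructible $\bF_q$--sheaves; but once you do that, the $\Cone(p)$/octahedral detour is superfluous. The paper proceeds more directly: first reduce via the ordinary truncation triangles to $\cF^\bullet=\cF[s]$ for a single constructible sheaf $\cF$, then take the filtration by the $p^{n-i}\cF$, whose successive quotients are constructible $\bF_q$--sheaves, and conclude by the $n=1$ case (Gabber) together with the long exact sequence in perverse cohomology. This uses precisely the Serre input you isolated, without the incorrect claim about $\Cone(p)$ and without the octahedral bookkeeping.
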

	\begin{proof}
		By the same argument as in the proof of \autoref{non-constr_yet}, it is enough to show that taking perverse cohomology sheaves preserve constructibility. Let $\cF^{\bullet} \in D^b_c(X_{\et}, W_n(\bF_q))$. Using the usual triangles \[ \tau_{\leq j}\cF^{\bullet} \to \cF^{\bullet} \to \tau_{\geq j + 1}\cF^{\bullet} \xto{+1} \] and the associated long exact sequence in perverse cohomology sheaves, we may assume that $\cF^{\bullet}$ is concentrated in a single degree, say $\cF^{\bullet} = \cF[s]$ for some $\cF \in \Sh^c(X_{\et}, W_n(\bF_q))$. Take a filtration $0 = \cF_0 \inc \cF_1 \inc \dots \inc \cF_n = \cF$ such that each $\cF_{i + 1}/\cF_i$ is $p$--torsion (hence an element in $\Sh(X_{\et}, \bF_q)$. Since the result holds for $p$--torsion sheaves by perverse t--exactness of $D^b(X_{\et}, \bF_q) \to D^b(X_{\et}, W_n(\bF_q))$ (see \autoref{non-constr_yet}) and the case $n = 1$ (see \cite[Theorem 10.3]{Gabber_notes_on_some_t_structures}), we deduce by induction and the long exact sequences in perverse cohomology sheaves that the result holds for $\cF$.		
	\end{proof}

	\begin{notation}
		The heart of the perverse t--structure on $D^b(X_{\et}, W_n(\bF_q))$ (resp. on $D^b_c(X_{\et}, W_n(\bF_q))$) is denoted $\Perv(X_{\et}, W_n(\bF_q))$ (resp. $\Perv_c(X_{\et}, W_n(\bF_q))$).
	\end{notation}

	We now have a perverse t--structure on $D^b_c(X_{\et}, W_n(\bF_q))$, and our goal is to show that it corresponds to the standard t--structure on $D^b(\Crys_{W_nX}^{C^r})$ via $\Sol \circ \: \bD$. Again, the proof will go by reducing to the case $n = 1$.
	
	\begin{lem}\label{devissage_crystal_side}
		Let $\cM^{\bullet} \in D^b(\Crys_{W_nX}^{C^r}) \cap D^{\geq 0}$. Then there exists an exact triangle \[ \cN^{\bullet} \to \cM^{\bullet} \to \cP^{\bullet} \xto{+1} \] such that $\cN^{\bullet} \in D^b(\Crys_{X}^{C^r}) \cap D^{\geq 0}$ and $\cP^{\bullet} \in D^b(\Crys_{W_{n - 1}X}^{C^r}) \cap D^{\geq 0}$.
	\end{lem}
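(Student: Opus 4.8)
The plan is to avoid derived-functor machinery entirely and instead produce the triangle from an honest short exact sequence of complexes, obtained by applying multiplication by $p$ termwise. The key structural fact is that $\Crys_{W_nX}^{C^r}$ is an abelian category in which the endomorphism $p^n$ of the identity functor vanishes, since $W_n\cO_X$ is a sheaf of $W_n(\bF_p) = \bZ/p^n\bZ$-algebras. First I would replace $\cM^{\bullet}$ by a genuine complex of $W_n$-Cartier crystals: because $\cM^{\bullet} \in D^b \cap D^{\geq 0}$, the good truncations let us assume $\cM^{\bullet} = (\cM^0 \to \cM^1 \to \cdots \to \cM^b)$ with each $\cM^i \in \Crys_{W_nX}^{C^r}$ and concentrated in degrees $[0,b]$.

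Next, for each term consider the endomorphism $p \colon \cM^i \to \cM^i$. In any abelian category this yields a short exact sequence $0 \to \cM^i[p] \to \cM^i \xrightarrow{p} p\cM^i \to 0$, where $\cM^i[p] = \ker(p)$ and $p\cM^i = \im(p)$, and which is natural in $\cM^i$ because multiplication by $p$ is a natural transformation of the identity. These sequences are therefore compatible with the differentials of $\cM^{\bullet}$ and assemble into a short exact sequence of complexes
\[ 0 \to \cM^{\bullet}[p] \to \cM^{\bullet} \to p\cM^{\bullet} \to 0. \]
Set $\cN^{\bullet} \coloneqq \cM^{\bullet}[p]$ and $\cP^{\bullet} \coloneqq p\cM^{\bullet}$. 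Each term $\cM^i[p]$ is killed by $p$, and each term $p\cM^i$ is killed by $p^{n-1}$ (since $p^{n-1}\cdot p\cM^i = p^n\cM^i = 0$). By \autoref{Crys_supported_at_p^m_is_W_m_Cartier}, applied with $m=1$ and $m=n-1$, these terms lie in the essential images of the fully faithful exact functors $\Crys_{X}^{C^r} \to \Crys_{W_nX}^{C^r}$ and $\Crys_{W_{n-1}X}^{C^r} \to \Crys_{W_nX}^{C^r}$; by fullness and faithfulness the differentials lift compatibly (and the relation $d\circ d = 0$ persists), so the whole complexes $\cN^{\bullet}$ and $\cP^{\bullet}$ descend to complexes over $\Crys_X^{C^r}$ and $\Crys_{W_{n-1}X}^{C^r}$ respectively. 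Hence $\cN^{\bullet} \in D^b(\Crys_X^{C^r})$ and $\cP^{\bullet} \in D^b(\Crys_{W_{n-1}X}^{C^r})$.

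Finally, the short exact sequence of complexes induces the desired distinguished triangle $\cN^{\bullet} \to \cM^{\bullet} \to \cP^{\bullet} \xrightarrow{+1}$ in $D^b(\Crys_{W_nX}^{C^r})$, and since $\cN^{\bullet}, \cP^{\bullet}$ are concentrated in degrees $[0,b]$ they lie in $D^{\geq 0}$. The one point requiring care—rather than a genuine difficulty—is that the functors $(-)[p]$ and $p\cdot(-)$ are \emph{not} exact on $\Crys_{W_nX}^{C^r}$, so they cannot be derived directly; working with a fixed termwise representative is legitimate precisely because we only need the existence of a triangle and not a functorial construction, and it is the fully faithfulness in \autoref{Crys_supported_at_p^m_is_W_m_Cartier} that guarantees the two outer complexes genuinely come from the smaller crystal categories.
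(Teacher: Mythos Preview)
Your argument is correct and in fact more elementary than the paper's. Both proofs rest on the same dévissage idea---splitting along the $p$--torsion/$p$--image filtration---but the implementations differ. The paper first lifts $\cM^{\bullet}$ to $D^b(\Coh_{W_nX}^{C^r})$, resolves by injectives in $\IndCoh_{W_nX}^{C^r}$, applies $i_1^{\flat}$ termwise so as to compute $i_1^!$, and must then argue separately (via the compatibility of $i_1^!$ on $\IndCoh$ and $\QCoh$ together with \stacksproj{0A79}) that the resulting complexes have coherent cohomology and are bounded after truncation. Your approach sidesteps all of this: you stay in the crystal category throughout, represent $\cM^{\bullet}$ by a genuine complex in degrees $[0,b]$ using good truncations, and form the termwise sequence $0 \to \cM^{\bullet}[p] \to \cM^{\bullet} \to p\cM^{\bullet} \to 0$ by hand. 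The identification of the outer terms with complexes over $\Crys_X^{C^r}$ and $\Crys_{W_{n-1}X}^{C^r}$ then follows immediately from \autoref{Crys_supported_at_p^m_is_W_m_Cartier} and the fact that $p^n$ acts by zero on every $W_n$--Cartier crystal. What the paper's route buys is the side result that $i_1^!$ preserves $D^+_{\coh}(\IndCoh_{W_nX}^{C^r})$, which is of independent interest; what your route buys is a shorter self-contained proof of the lemma as stated, with no recourse to injective resolutions or to the comparison results \autoref{D^b(Coh) = D^b_{coh}(QCoh) for Cartier modules} and \autoref{injectives_everywhere}. Your remark that the termwise construction is legitimate precisely because only the \emph{existence} of a triangle is needed (not functoriality in $\cM^{\bullet}$) is exactly the right observation.
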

	\begin{proof}
		By exactness of $\Coh_{W_nX}^{C^r} \to \Crys_{W_nX}^{C^r}$, it is enough to show the result for $\Coh_{W_nX}^{C^r}$. Fix an complex of injectives $\cI^{\bullet}$ quasi--isomorphic to $\cM^{\bullet}$ in $\IndCoh_{W_nX}^{C^r}$, and consider the short exact sequence of complexes \[ 0 \to i_1^{\flat}\cI^{\bullet} \to \cI^{\bullet} \to \cQ^{\bullet} \to 0 \] (recall that $i_1$ denotes the natural closed immersion $X \inj W_nX$).
		
		We may assume that $\cI^j = 0$ for all $j < 0$, so it is automatic that both $i_1^{\flat}\cI^{\bullet}$ and $\cQ^{\bullet}$ are in $D^{\geq 0}$. Now, let $i \geq 0$ such that $\cH^j(\cI^{\bullet}) = 0$ for all $j \geq i$. Then we obtain that \[ \tau_{\leq {i + 1}} (i_1^{\flat}\cI^{\bullet}) \to \tau_{\leq i + 1}(\cI^{\bullet}) = \tau_{\leq i}(\cI^{\bullet}) \to \tau_{\leq i}(\cQ^{\bullet}) \xto{+1}  \] is an exact triangle.
		
		Note that $i_1^{\flat}(\cI^{\bullet})$ is a complex of objects in $\IndCoh_X^{C^r}$, and $\cQ$ is a complex of objects in $\IndCoh_{W_{n - 1}X}^{C^r}$ (it is annihilated by elements of the form $V^{n - 1}(t)$ with $t \in \cO_X$, since $V^n = 0$).
		
		Let us show that $i_1^{\flat}\cI^{\bullet}$ and $\cQ$ have coherent cohomology sheaves. It is enough to show this result for $i_1^{\flat}\cI^{\bullet}$ by the long exact sequence in cohomology, so our goal is to show that the functor $i^!$ perserves $D^+_{\coh}(\IndCoh_{W_nX}^{C^r})$. Since an injective in $\IndCoh_{W_nX}^{C^r}$ is also injective in $\QCoh_{W_nX}$, the diagram
		 \[ \begin{tikzcd}
			D^+(\IndCoh_{W_nX}^{C^r}) \arrow[d, "i_1^!"] \arrow[rr] &  & D^+(\QCoh_{W_nX}) \arrow[d, "i_1^!"] \\
			D^+(\IndCoh_{X}^{C^r}) \arrow[rr]                       &  & D^+(\QCoh_X)                        
		\end{tikzcd} \] commutes. Hence, the result follows from \stacksproj{0A79}. 
	
		Hence, we have proven that $\tau_{\leq i + 1}(\cI^{\bullet}) \in D^b_{\coh}(\IndCoh_{W_nX}^{C^r})$, so by \cite[Theorem 2.9.1]{Bockle_Pink_Cohomological_Theory_of_crystals_over_function_fields} there exists $\cM^{\bullet} \in D^b(\Coh_{W_nX}^{C^r})$ such that $\cM^{\bullet} \cong \tau_{\leq i + 1}(\cI^{\bullet})$. Similarly, there exists $\cP^{\bullet} \in D^b(\Coh_{W_{n - 1}X})$ quasi--isomorphic to $\cQ^{\bullet}$. Since $\cI^{\bullet} \cong \cM^{\bullet}$, we deduce that the exact triangle \[ \cN^{\bullet} \to \cM^{\bullet} \to \cP^{\bullet} \xto{+1} \] satisfies the required hypotheses.
	\end{proof}

	\begin{lem}\label{devissage_perverse_side}
		Let $\cF^{\bullet} \in D^b_c(X_{\et}, W_n(\bF_q)) \cap {}^pD^{\leq 0}$. Then there exists an exact triangle \[ \cG^{\bullet} \to \cF^{\bullet} \to \cH^{\bullet} \xto{+1} \] with $\cG^{\bullet} \in D^b_c(X_{\et}, W_{n - 1}(\bF_q))$ and $\cH^{\bullet} \in D^b(X_{\et}, \bF_q)$.
	\end{lem}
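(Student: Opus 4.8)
The plan is to carry out on the étale side the exact analogue of the dévissage in \autoref{devissage_crystal_side}, with the role of the closed immersion $i_1$ now played by multiplication by $p$. The key input is the elementary identification of coefficient rings: since $\bF_q$ is perfect we have $W_n(\bF_q)/p^mW_n(\bF_q) \cong W_m(\bF_q)$ for every $1 \leq m \leq n$ (reduce the unramified truncated ring $\cO_K/p^n$ modulo $p^m$), so an étale $W_n(\bF_q)$--sheaf is killed by $p^m$ if and only if it is the restriction of scalars of a $W_m(\bF_q)$--sheaf. In particular, the $p$--torsion sheaves are exactly the $\bF_q$--sheaves and the $p^{n-1}$--torsion sheaves are exactly the $W_{n-1}(\bF_q)$--sheaves; this is the sheaf--theoretic counterpart of \autoref{Crys_supported_at_p^m_is_W_m}. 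The construction itself is then: represent $\cF^{\bullet}$ by an honest bounded complex of constructible $W_n(\bF_q)$--sheaves (using the standard fact that the natural functor from $D^b$ of the abelian category of constructible sheaves to $D^b_c(X_{\et}, W_n(\bF_q))$ is an equivalence), and form the termwise short exact sequence of complexes
\[ 0 \to p\cF^{\bullet} \to \cF^{\bullet} \to \cF^{\bullet}/p\cF^{\bullet} \to 0, \]
which is a short exact sequence because $p\cF^{\bullet}$ is a subcomplex ($d(p\cF^j) \subseteq p\cF^{j+1}$). This yields the desired exact triangle $\cG^{\bullet} \to \cF^{\bullet} \to \cH^{\bullet} \xto{+1}$ with $\cG^{\bullet} \coloneqq p\cF^{\bullet}$ and $\cH^{\bullet} \coloneqq \cF^{\bullet}/p\cF^{\bullet}$.

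It remains only to identify the coefficients. Each term $p\cF^j$ is a subobject of the constructible sheaf $\cF^j$, hence constructible (constructible sheaves form a Serre subcategory, as already used in \autoref{perverse_t_structure}), and it is killed by $p^{n-1}$ since $p^{n-1}\cdot p\cF^j = p^n\cF^j = 0$; by the identification above, $\cG^{\bullet} \in D^b_c(X_{\et}, W_{n-1}(\bF_q))$. Dually, each $\cF^j/p\cF^j$ is constructible and killed by $p$, so $\cH^{\bullet} \in D^b_c(X_{\et}, \bF_q) \subseteq D^b(X_{\et}, \bF_q)$. This proves the statement. Observe that this construction is insensitive to the hypothesis $\cF^{\bullet} \in {}^pD^{\leq 0}$, which is simply the ambient condition under which the lemma is applied in the inductive proof of the main theorem (mirroring the role of $D^{\geq 0}$ in \autoref{devissage_crystal_side}).

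The main obstacle is boundedness, and it is what forces the \emph{underived} construction above. One might be tempted to set $\cH^{\bullet} \coloneqq \cF^{\bullet} \otimes^L_{W_n(\bF_q)} \bF_q$ and take $\cG^{\bullet}$ to be the fiber, but $\bF_q = W_n(\bF_q)/p$ has infinite Tor--dimension over $W_n(\bF_q)$ (its minimal free resolution is $2$--periodic, with syzygies alternating between $p$ and $p^{n-1}$), so this derived base change is unbounded below and leaves $D^b$. Taking instead the naive image and cokernel of multiplication by $p$ on a constructible representative sidesteps this entirely. The price is that, unlike a derived adjunction triangle, this triangle does not manifestly keep the outer terms in ${}^pD^{\leq 0}$; should that refinement be needed downstream, it has to be extracted separately, by working on the perverse cohomology sheaves ${}^p\cH^i(\cF^{\bullet}) \in \Perv_c(X_{\et}, W_n(\bF_q))$ and using the perverse $t$--exactness of the coefficient inclusions from \autoref{non-constr_yet} and \autoref{perverse_t_structure}, where multiplication by $p$ acts through honest exact sequences in the heart.
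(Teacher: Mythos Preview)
Your proof is correct for the lemma as literally stated, and is more direct than the paper's. Both arguments form the termwise short exact sequence $0 \to p\cF^{\bullet} \to \cF^{\bullet} \to \cF^{\bullet}/p \to 0$, but they differ in how $\cF^{\bullet}$ is represented: you invoke the equivalence $D^b(\Sh^c(X_{\et}, W_n(\bF_q))) \cong D^b_c(X_{\et}, W_n(\bF_q))$ to take a bounded complex of constructibles and finish immediately, whereas the paper resolves by a bounded--above complex of \emph{flat} constructibles (so that $\cF^{\bullet}/p$ genuinely computes $\cF^{\bullet} \otimes^L_{W_n(\bF_q)} \bF_q$) and must then perverse--truncate to land back in $D^b_c$. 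The paper's longer route buys exactly what you flag in your final paragraph: to carry out that truncation it first proves that $p\cF^{\bullet}$ and $\cF^{\bullet}/p$ lie in ${}^pD^{\leq 0}$, so its $\cG^{\bullet}$ and $\cH^{\bullet}$ do as well. Although this refinement is not part of the stated conclusion, it is what the inductive step in the proof of \autoref{main_thm_final} actually uses (the induction hypothesis there applies only to objects of ${}^pD^{\leq 0}$). Your triangle, built from a non--flat representative, carries no such control on the outer terms; so while your argument proves the lemma as written, it would leave a gap at the point where the lemma is applied.
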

	\begin{proof}
		The proof will be very similar to that of \autoref{devissage_crystal_side}, replacing the use of $i_1^!$ by that of $- \otimes_{W_n{\bF_q}}^L \bF_q$. By \stacksproj{0960}, we may assume that $\cF^{\bullet}$ is a bounded above complex of flat constructible $W_n(\bF_q)$--sheaves. Consider the exact sequence of complexes \[ 0 \to p\cF^{\bullet} \to \cF^{\bullet} \to \cF^{\bullet} \otimes_{W_n(\bF_q)} \bF_q \to 0. \] By our discussion above, we obtain that $\cF^{\bullet}/p = \cF^{\bullet} \otimes_{W_n(\bF_q)} \bF_q \in D^-_c(X_{\et}, \bF_q)$, and that $p\cF^{\bullet} \in D^-_c(X_{\et}, W_{n - 1}(\bF_q))$.
		
		In order to apply the same truncation argument as in the proof of \autoref{devissage_crystal_side} (with perverse truncations this time) and conclude the proof, it is enough to show that both $\cF^{\bullet}/p$ and  $p\cF^{\bullet}$ live in ${}^pD^{\leq 0}$. Fix $x \in X$ and $j \in \bZ$. Applying $- \otimes^L_{W_n(\bF_q)} \bF_q$ to the triangle \[ \cH^j(\cF^{\bullet})[-j] \to \tau_{\geq j}(\cF^{\bullet}) \to \tau_{\geq j + 1}(\cF^{\bullet}) \xto{+1} \] gives in particular an exact sequence \[ \dots \to \cH^j(\cF^{\bullet})/p \to \cH^j(\cF^{\bullet}/p) \to \cH^{j}(\tau_{\geq j + 1}(\cF^{\bullet}) \otimes^L_{W_n(\bF_q)} \bF_q) \to 0. \]
		
		In particular, if $j > -\dim\overline{\{x\}}$, then by assumption we know that $\tau_{\geq j + 1}(\cF^{\bullet})_{\overline{x}} = 0$ and $\cH^j(\cF^{\bullet})_{\overline{x}} = 0$, so $\cH^j(\cF^{\bullet}/p)_{\overline{x}} = 0$. This shows that $\cF^{\bullet}/p \in {}^pD^{\leq 0}$. Furthermore, the above exact sequence (with $j$ replaced by $j - 1$) shows that the cokernel $\cE$ of $\cH^{j - 1}(\cF^{\bullet}) \to \cH^{j - 1}(\cF^{\bullet}/p)$ is a submodule of $\cH^{j - 1}(\tau_{\geq j}(\cF^{\bullet}) \otimes^L_{W_n(\bF_q)} \bF_q)$, and hence $\cE_{\overline{x}} = 0$ given that $j > -\dim\overline{\{x\}}$. Thanks to the long exact sequence \[ \dots \cH^{j - 1}(\cF^{\bullet}) \to \cH^{j - 1}(\cF^{\bullet}/p) \to \cH^j(p\cF^{\bullet}) \to \cH^j(\cF^{\bullet}) \to \dots, \] we have an exact sequence \[ 0 \to \cE \to \cH^j(p\cF^{\bullet}) \to \cH^j(\cF^{\bullet}) \to \dots. \] Given that $\cE_{\overline{x}} = \cH^j(\cF^{\bullet})_{\overline{x}} = 0$, we conclude that $\cH^j(p\cF^{\bullet})_{\overline{x}} = 0$. We have proven that also $p\cF^{\bullet} \in {}^pD^{\leq 0}$, so the proof is complete.
	\end{proof}

	\begin{thm}\label{main_thm_final}
		Let $X$ be a Noetherian, $F$--finite, semi--separated $\bF_q$--scheme with a $W_n$--unit dualizing complex $W_n\omega_X^{\bullet}$, and associated duality functor $\bD$. Then up to shifting $W_n\omega_X^{\bullet}$ on each connected component of $X$, the equivalence $\Sol \circ \: \bD$ sends the canonical t--structure on $D^b(\Crys_{W_nX}^{C^r})^{op}$ to the perverse t--structure on $D^b_c(X_{\et}, W_n(\bF_q))$, hence inducing an equivalence of categories \[ (\Crys_{W_nX}^{C^r})^{op} \cong \Perv_c(X_{\et}, W_n(\bF_q)). \] Furthermore, the equivalence $\Sol \circ \: \bD$ commutes with derived proper pushforwards, and with the inclusions $D^b(\Crys_{W_mX}^{C^r})^{op} \to D^b(\Crys_{W_nX}^{C^r})^{op}$ and $D^b_c(X_{\et}, W_n(\bF_q))$ for $1 \leq m \leq n$.
	\end{thm}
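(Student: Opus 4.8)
The plan is to show that $\Phi \coloneqq \Sol \circ \bD$, which is a triangulated equivalence $D^b(\Crys_{W_nX}^{C^r})^{op} \cong D^b_c(X_{\et}, W_n(\bF_q))$ by combining \autoref{main thm duality} with \autoref{main thm Bhatt Lurie}, is t--exact for the opposite of the canonical t--structure on the source and the perverse t--structure on the target. Since $\Phi$ is an equivalence, it suffices to establish the two inclusions $\Phi(D^{\geq 0}) \inc {}^pD^{\leq 0}$ and $\Phi^{-1}({}^pD^{\leq 0}) \inc D^{\geq 0}$, where $D^{\geq 0}$ denotes the canonical aisle on $D^b(\Crys_{W_nX}^{C^r})$ (recall that passing to the opposite category exchanges the two aisles). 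Together these force $\Phi(D^{\geq 0}) = {}^pD^{\leq 0}$, i.e. equality of the $\leq 0$ aisles, which for an equivalence already yields t--exactness. The shift of $W_n\omega_X^{\bullet}$ on each connected component is the one making the $n = 1$ statement of \cite{Baudin_Duality_between_perverse_sheaves_and_Cartier_crystals} hold; by the normalisation $W_m\omega_X^{\bullet} = i_m^!W_n\omega_X^{\bullet}$ this choice is compatible with lowering $n$, so no further adjustment is needed in the induction.

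The whole argument is an induction on $n$, the base case $n = 1$ being \cite{Baudin_Duality_between_perverse_sheaves_and_Cartier_crystals}. For the first inclusion, fix $\cM^{\bullet} \in D^b(\Crys_{W_nX}^{C^r}) \cap D^{\geq 0}$ and apply \autoref{devissage_crystal_side} to obtain an exact triangle $\cN^{\bullet} \to \cM^{\bullet} \to \cP^{\bullet} \xto{+1}$ with $\cN^{\bullet} \in D^b(\Crys_X^{C^r}) \cap D^{\geq 0}$ and $\cP^{\bullet} \in D^b(\Crys_{W_{n-1}X}^{C^r}) \cap D^{\geq 0}$. Applying $\Phi$, which being built from a duality reverses the triangle, yields an exact triangle $\Phi(\cP^{\bullet}) \to \Phi(\cM^{\bullet}) \to \Phi(\cN^{\bullet}) \xto{+1}$. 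The compatibility of $\bD$ with $i_{m, *}$ (\autoref{main thm duality}) together with the compatibility of $\Sol$ with the inclusions $\Sh(X_{\et}, W_m(\bF_q)) \inj \Sh(X_{\et}, W_n(\bF_q))$ (\autoref{main thm Bhatt Lurie}) shows that on the $W_1$-- and $W_{n-1}$--pieces $\Phi$ coincides, through these inclusions, with the solution--duality functors at levels $1$ and $n - 1$. Since the inclusions $D^b_c(X_{\et}, W_m(\bF_q)) \to D^b_c(X_{\et}, W_n(\bF_q))$ are perverse t--exact (\autoref{non-constr_yet}), the base case gives $\Phi(\cN^{\bullet}) \in {}^pD^{\leq 0}$ and the inductive hypothesis gives $\Phi(\cP^{\bullet}) \in {}^pD^{\leq 0}$; as ${}^pD^{\leq 0}$ is stable under extensions, $\Phi(\cM^{\bullet}) \in {}^pD^{\leq 0}$.

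The second inclusion is entirely dual, using \autoref{devissage_perverse_side} in place of \autoref{devissage_crystal_side}: given $\cF^{\bullet} \in D^b_c(X_{\et}, W_n(\bF_q)) \cap {}^pD^{\leq 0}$, that lemma produces an exact triangle $\cG^{\bullet} \to \cF^{\bullet} \to \cH^{\bullet} \xto{+1}$ with $\cG^{\bullet} \in D^b_c(X_{\et}, W_{n-1}(\bF_q)) \cap {}^pD^{\leq 0}$ and $\cH^{\bullet} \in D^b(X_{\et}, \bF_q) \cap {}^pD^{\leq 0}$. Applying $\Phi^{-1}$ and the same compatibilities, the base case and inductive hypothesis place $\Phi^{-1}(\cG^{\bullet})$ and $\Phi^{-1}(\cH^{\bullet})$ in $D^{\geq 0}$; since $D^{\geq 0}$ is stable under extensions, $\Phi^{-1}(\cF^{\bullet}) \in D^{\geq 0}$. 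Combining the two inclusions yields t--exactness, and restricting to hearts gives the equivalence $(\Crys_{W_nX}^{C^r})^{op} \cong \Perv_c(X_{\et}, W_n(\bF_q))$.

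Finally, commutation with derived proper pushforwards follows by composing the isomorphism $Rf_* \circ \bD_X \cong \bD_Y \circ Rf_*$ of \autoref{comp duality and pushforwards} with the commutation of $\Sol$ and $Rf_*$ recorded in \autoref{properties of Sol}; commutation with the inclusions for $1 \leq m \leq n$ is exactly the package of compatibilities used above. The genuinely new input is concentrated in the two dévissage lemmas, so the remaining delicate points are purely bookkeeping: tracking the variance introduced by the opposite category (so that the triangles and the two aisles are matched on the correct sides), and checking that the connected--component shift fixed at $n = 1$ propagates correctly through $i_m^!$. I expect this bookkeeping, rather than any new homological idea, to be the main thing to get right.
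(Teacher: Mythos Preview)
Your proposal is correct and follows essentially the same inductive d\'evissage argument as the paper, invoking \autoref{devissage_crystal_side} and \autoref{devissage_perverse_side} in the same way and appealing to the same compatibilities. One small slip: for the commutation of $\Sol$ with $Rf_*$ you want \autoref{main thm Bhatt Lurie} (the Frobenius--crystal $\Sol$), not \autoref{properties of Sol} (which concerns the Cartier--module $\Sol$); also note that \autoref{devissage_perverse_side} as stated does not record that $\cG^{\bullet}, \cH^{\bullet} \in {}^pD^{\leq 0}$, though its proof establishes this and the paper uses it implicitly just as you do.
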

	\begin{proof}
		Let us shift $W_n\omega_X^{\bullet}$ on each connected component of $X$, so that the dimension function associated to the dualizing complex $i_1^!W_n\omega_X^{\bullet}$ on $X$ is $x \mapsto -\dim\overline{\{x\}}$ (see the statement and proof of \cite[Lemma 5.2.2]{Baudin_Duality_between_perverse_sheaves_and_Cartier_crystals}).
		
		By \autoref{main thm Bhatt Lurie}, \autoref{main thm duality} and \autoref{comp duality and pushforwards}, the functor $\Sol \circ \: \bD$ is indeed an equivalence of categories preserving derived proper pushforwards and the inclusions at the end of the statement. Given that it is an equivalence, we have a priori two t--structures on $D^b_c(X_{\et}, W_n(\bF_q))$: $(\Sol(\bD(D^{\geq 0})), \Sol(\bD(D^{\leq 0})))$ and $({}^pD^{\leq 0}, {}^pD^{\geq 0})$. Since one of the subcategories defining a t--structure completely determines the other one, it is enough to show that $\Sol(\bD(D^{\geq 0})) = {}^pD^{\leq 0}$ to obtain that these two t--structures are equal. We will show the result by induction on $n \geq 1$. This is true for $n = 1$ by \cite[Theorem 5.2.7]{Baudin_Duality_between_perverse_sheaves_and_Cartier_crystals}, so now assume that $n \geq 2$. 
		
		Let $\cM^{\bullet} \in D^{\geq 0}$ in $D^b(\Crys_{W_nX}^{C^r})$. By \autoref{devissage_crystal_side}, there exists an exact triangle \[ \cN^{\bullet} \to \cM^{\bullet} \to \cP^{\bullet} \xto{+1} \] with $\cN^{\bullet} \in D^b(\Crys_X^{C^r})$ and $\cP^{\bullet} \in D^b(\Crys_{W_{n - 1}X}^{C^r})$, with both $\cN^{\bullet}$ and $\cP^{\bullet}$ in $D^{\geq 0}$. Since $\bD$ commutes with $D^b(\Crys_{W_mX}^{C^r}) \to D^b(\Crys_{W_nX}^{C^r})$ for $1 \leq m \leq n$ (see \autoref{main thm duality}) and the analogue statement holds for $\Sol$ (see \autoref{main thm Bhatt Lurie}), we deduce by the induction hypothesis that both $\Sol(\bD(\cN^{\bullet}))$ and $\Sol(\bD(\cP^{\bullet}))$ live in ${}^pD^{\leq 0}$. Since we have an exact triangle \[ \Sol(\bD(\cP^{\bullet})) \to \Sol(\bD(\cM^{\bullet})) \to \Sol(\bD(\cN^{\bullet})) \xto{+1}, \] we conclude that $\Sol(\bD(\cM^{\bullet})) \in {}^pD^{\leq 0}$ by the long exact sequence in perverse cohomology sheaves. 
		
		By the exact same argument (this time using \autoref{devissage_perverse_side}), we see that $(\Sol \circ \bD)^{-1}$ sends ${}^pD^{\leq 0}$ to $D^{\geq 0}$, and hence \[ (\Sol \circ \:\bD)(D^{\geq 0}) = {}^pD^{\leq 0}. \]
	\end{proof}

	\bibliographystyle{alpha}
	\bibliography{bibliography}

\newcommand{\etalchar}[1]{$^{#1}$}
\begin{thebibliography}{KTT{\etalchar{+}}24}

\bibitem[Bau23]{Baudin_Duality_between_perverse_sheaves_and_Cartier_crystals}
J.~Baudin.
\newblock Duality between {C}artier crystals and perverse
  $\mathbb{F}_p$-sheaves, and application to generic vanishing.
\newblock {\em arXiv e-print: arXiv:2306.05378v2}, 2023.
\newblock Available at
  \href{https://arxiv.org/abs/2306.05378}{arXiv:2306.05378}.

\bibitem[Bau25]{Baudin_Witt_GR_vanishing_and_applications}
J.~Baudin.
\newblock A {G}rauert--{R}iemenschneider vanishing theorem for {W}itt canonical
  sheaves.
\newblock {\em To appear}, 2025.

\bibitem[BB11]{Blickle_Bockle_Cartier_modules_finiteness_results}
M.~Blickle and G.~B\"{o}ckle.
\newblock Cartier modules: finiteness results.
\newblock {\em J. Reine Angew. Math.}, 661:85--123, 2011.

\bibitem[BB13]{Blickle_Bockle_Cartier_crystals}
M.~Blickle and G.~B\"{o}ckle.
\newblock Cartier crystals.
\newblock {\em arXiv e-print: arXiv:1309.1035v1}, 2013.
\newblock Available at \href{https://arxiv.org/abs/1309.1035}{arXiv:1309.1035}.

\bibitem[BBK23]{Baudin_Bernasconi_Kawakami_Frobenius_GR_fails}
J.~Baudin, F.~Bernasconi, and T.~Kawakami.
\newblock The {F}robenius--stable version of the {G}rauert--{R}iemenschneider
  vanishing theorem fails.
\newblock {\em arXiv e-print: arXiv:2312.13456v3}, 2023.
\newblock Available at
  \href{https://arxiv.org/abs/2312.13456}{arXiv:2312.13456}.

\bibitem[BBL{\etalchar{+}}23]{Bhatt&Co_Applications_of_perverse_sheaves_in_commutative_algebra}
B.~Bhatt, M.~Blickle, G.~Lyubeznik, A.~K. Singh, and W.~Zhang.
\newblock Applications of perverse sheaves in commutative algebra.
\newblock {\em arXiv e-print, arXiv:2308.03155v1}, 2023.
\newblock Available at
  \href{https://arxiv.org/abs/2308.03155}{arXiv:2308.03155}.

\bibitem[BF22]{Blickle_Fink_Cartier_crystals_have_finite_global_dimension}
M.~Blickle and D.~Fink.
\newblock Cartier crystals have finite global dimension.
\newblock {\em arXiv e-print: arXiv:2211.11466v1}, 2022.
\newblock Available at
  \href{https://arxiv.org/abs/2211.11466}{arXiv:2211.11466}.

\bibitem[BL19]{Bhatt_Lurie_RH_corr_pos_char}
B.~Bhatt and J.~Lurie.
\newblock A {R}iemann-{H}ilbert correspondence in positive characteristic.
\newblock {\em Camb. J. Math.}, 7(1-2):71--217, 2019.

\bibitem[Bli13]{Blickle_Test_ideals_via_algebras_of_pe_linear_maps}
M.~Blickle.
\newblock Test ideals via algebras of {$p^{-e}$}-linear maps.
\newblock {\em J. Algebraic Geom.}, 22(1):49--83, 2013.

\bibitem[BP09]{Bockle_Pink_Cohomological_Theory_of_crystals_over_function_fields}
G.~B\"{o}ckle and R.~Pink.
\newblock {\em Cohomological theory of crystals over function fields}, volume~9
  of {\em EMS Tracts in Mathematics}.
\newblock European Mathematical Society (EMS), Z\"{u}rich, 2009.

\bibitem[CRS23]{Carvajal_Rojas_Stabler_On_the_behavior_of_F_signature_uner_finite_covers}
J.~Carvajal-Rojas and A.~St\"{a}bler.
\newblock On the behavior of {$F$}-signatures, splitting primes, and test
  modules under finite covers.
\newblock {\em J. Pure Appl. Algebra}, 227(1):Paper No. 107165, 38, 2023.

\bibitem[Eji24]{Ejiri_Direct_images_of_pluricanonical_bundles_and_Frobenius_stable_canonical_rings_of_fibers}
S.~Ejiri.
\newblock Direct images of pluricanonical bundles and {F}robenius stable
  canonical rings of fibers.
\newblock {\em Algebr. Geom.}, 11(1):71--110, 2024.

\bibitem[EK04]{Emerton_Kisin_Riemann-Hilbert_correspondence}
M.~Emerton and M.~Kisin.
\newblock The {R}iemann-{H}ilbert correspondence for unit {$F$}-crystals.
\newblock {\em Ast\'{e}risque}, (293):vi+257, 2004.

\bibitem[Gab04]{Gabber_notes_on_some_t_structures}
O.~Gabber.
\newblock Notes on some {$t$}-structures.
\newblock In {\em Geometric aspects of {D}work theory. {V}ol. {I}, {II}}, pages
  711--734. Walter de Gruyter, Berlin, 2004.

\bibitem[GR04]{Gabber_Ramero_Almost_ring_theory}
O.~Gabber and L.~Ramero.
\newblock {\em Foundations for almost ring theory -- {R}elease 7.5}.
\newblock 2004.
\newblock Available at
  \href{https://arxiv.org/abs/math/0409584}{arXiv:0409584}.

\bibitem[HP22]{Hacon_Pat_GV_Geom_Theta_Divs}
C.~D. Hacon and Zs. Patakfalvi.
\newblock Generic vanishing in characteristic {$p>0$} and the geometry of theta
  divisors.
\newblock {\em Boll. Unione Mat. Ital.}, 15(1-2):215--244, 2022.

\bibitem[HS93]{Huneke_Sharp_Bass_numbers_of_local_coh_modules}
C.~L. Huneke and R.~Y. Sharp.
\newblock Bass numbers of local cohomology modules.
\newblock {\em Trans. Amer. Math. Soc.}, 339(2):765--779, 1993.

\bibitem[HX15]{Hacon_Xu_On_the_3dim_MMP_in_pos_char}
C.~D. Hacon and C.~Xu.
\newblock On the three dimensional minimal model program in positive
  characteristic.
\newblock {\em J. Amer. Math. Soc.}, 28(3):711--744, 2015.

\bibitem[KTT{\etalchar{+}}24]{Quasi_F_splittings_III}
T.~Kawakami, T.~Takamatsu, H.~Tanaka, J.~Witaszek, F.~Yobuko, and S.~Yoshikawa.
\newblock Quasi-{F}-splittings in birational geometry {III}.
\newblock {\em arXiv e-print: arXiv:2408.01921v1}, 2024.
\newblock Available at
  \href{https://arxiv.org/abs/2408.01921}{arXiv:2408.01921}.

\bibitem[Kun76]{Kunz_Noetherian_Rings_of_char_p}
E.~Kunz.
\newblock On {N}oetherian rings of characteristic {$p$}.
\newblock {\em Amer. J. Math.}, 98(4):999--1013, 1976.

\bibitem[Lyu97]{Lyubeznik_F_modules_applications_to_local_coh_and_D_mods_in_char_p}
G.~Lyubeznik.
\newblock {$F$}-modules: applications to local cohomology and {$D$}-modules in
  characteristic {$p>0$}.
\newblock {\em J. Reine Angew. Math.}, 491:65--130, 1997.

\bibitem[LZ04]{Langer_Zink_De_Rham_Witt_cohomology_for_a_proper_and_smooth_morphism}
A.~Langer and T.~Zink.
\newblock De {R}ham-{W}itt cohomology for a proper and smooth morphism.
\newblock {\em J. Inst. Math. Jussieu}, 3(2):231--314, 2004.

\bibitem[Mil80]{Milne_Etale_Cohomology}
J.~S. Milne.
\newblock {\em \'{E}tale cohomology}.
\newblock Princeton Mathematical Series, No. 33. Princeton University Press,
  Princeton, N.J., 1980.

\bibitem[Mum08]{Mumford_Abelian_Varieties}
D.~Mumford.
\newblock {\em Abelian varieties}, volume~5 of {\em Tata Institute of
  Fundamental Research Studies in Mathematics}.
\newblock Published for the Tata Institute of Fundamental Research, Bombay; by
  Hindustan Book Agency, New Delhi, 2008.
\newblock With appendices by C. P. Ramanujam and Yuri Manin, Corrected reprint
  of the second (1974) edition.

\bibitem[MW24]{Mattis_Weiss_The_derived_infty_category_of_Cartier_modules}
K.~Mattis and T.~Wei{$\beta$}.
\newblock The derived {$\infty$}-category of {C}artier {M}odules.
\newblock {\em arXiv e-print: arXiv:2410.17102v1}, 2024.
\newblock Available at
  \href{https://arxiv.org/abs/2410.17102}{arXiv:2410.17102}.

\bibitem[Pat18]{Patakfalvi_On_Subadditivity_of_Kodaira_dimension_in_positive_characteristic}
Zs. Patakfalvi.
\newblock On subadditivity of {K}odaira dimension in positive characteristic
  over a general type base.
\newblock {\em J. Algebraic Geom.}, 27(1):21--53, 2018.

\bibitem[Sch11]{Schwede_Test_ideals_in_non_Q_Gor_rings}
K.~Schwede.
\newblock Test ideals in non-{$\Bbb{Q}$}-{G}orenstein rings.
\newblock {\em Trans. Amer. Math. Soc.}, 363(11):5925--5941, 2011.

\bibitem[Sch18]{Schedlmeier_Cartier_crystals_and_perverse_sheaves}
T.~Schedlmeier.
\newblock Cartier crystals and perverse constructible \'etale $p$-torsion
  sheaves.
\newblock {\em arXiv e-print: arXiv:1603.07696v2}, 2018.
\newblock Available at
  \href{https://arxiv.org/abs/1603.07696}{arXiv:1603.07696}.

\bibitem[SGA72]{SGA_4_tome_2}
{\em Th\'{e}orie des topos et cohomologie \'{e}tale des sch\'{e}mas. {T}ome 2}.
\newblock Lecture Notes in Mathematics, Vol. 270. Springer-Verlag, Berlin-New
  York, 1972.
\newblock S\'{e}minaire de G\'{e}om\'{e}trie Alg\'{e}brique du Bois-Marie
  1963--1964 (SGA 4), Dirig\'{e} par M. Artin, A. Grothendieck et J. L.
  Verdier. Avec la collaboration de N. Bourbaki, P. Deligne et B. Saint-Donat.

\bibitem[{Sta}25]{Stacks_Project}
The {Stacks project authors}.
\newblock The {S}tacks project.
\newblock \url{https://stacks.math.columbia.edu}, 2025.

\end{thebibliography}
	
	\Addresses
	
\end{document}